\numberwithin{equation}{section}
	\newcommand{\ZZ}{\mathbb{Z}}
	\newcommand{\NN}{\mathbb{N}}
	\newtheorem{thm}{Theorem}[section]
	\newtheorem{lem}[thm]{Lemma}
	\newtheorem{prp}[thm]{Proposition}
	\newtheorem*{recast}{Recasting the Hazrat Conjecture}
	\newtheorem*{hazrat}{Hazrat's Graded Morita Equivalence Conjecture}
	\theoremstyle{definition}
	\newtheorem{dfn}[thm]{Definition}
	\newtheorem{ntn}[thm]{Notation}
	\theoremstyle{remark}
	\newtheorem{rmk}[thm]{Remark}
	\newtheorem{exm}[thm]{Example}
\begin{document}
	\title[Recasting the Hazrat Conjecture]{Recasting the Hazrat Conjecture:  \\ 
	\vspace{.05in}
	 Relating Shift Equivalence to \\ Graded Morita Equivalence
}
	\author{Gene Abrams}
	\address{Department of Mathematics \\ University of Colorado \\ 1420 Austin Bluffs Parkway \\  Colorado Springs, CO 80918 U.S.A.}
	\email{abrams@math.uccs.edu}
	\author{Efren Ruiz}
        \address{Department of Mathematics\\University of Hawaii,
Hilo\\200 W. Kawili St.\\
Hilo, Hawaii\\
96720-4091 U.S.A.}
        \email{ruize@hawaii.edu}
        
        \author{Mark Tomforde}
	\address{Department of Mathematics \\ University of Colorado \\ 1420 Austin Bluffs Parkway \\  Colorado Springs, CO 80918 U.S.A.}
	\email{mtomford@uccs.edu}
        
        \date{\today}
	
\begin{abstract} 
Let $E$ and $F$ be finite graphs with no sinks, and $k$ any field.   We show that shift equivalence of the adjacency matrices $A_E$ and $A_F$, together  with an additional compatibility condition,  implies that the Leavitt path algebras $L_k(E)$ and $L_k(F)$ are graded Morita equivalent.   Along the way, we build a new type of $L_k(E)$--$L_k(F)$-bimodule (a {\it bridging bimodule}),  which we use to establish the 
graded equivalence.
\end{abstract}

\thanks{This work was supported by grants from the Simons Foundation (\#527708 to Mark Tomforde and \#567380 to Efren Ruiz)}

\dedicatory{Dedicated to the memory of Professor Iain Raeburn.  \\  The collaboration of the  coauthors on this article   is a direct result of Iain's profound influence \\ on the field in general, and on the three of us individually.  He will be deeply missed.}

\maketitle

\section{Introduction}

A number of longstanding, as-yet-unresolved questions in the study of Leavitt path algebras have at their heart the search for  ``easily computable" data about the underlying graphs that would yield ring-theoretic connections between the corresponding algebras.  Perhaps the  most intensely investigated  of these questions is the {\it Hazrat Graded Morita Equivalence Conjecture}, which we describe here. 

Two square matrices $A$ and $B$ (not necessarily of the same size) with nonnegative integer entries are called  \emph{shift equivalent} provided that there exists a positive integer $n$, and rectangular matrices $R$ and $S$ with nonnegative integer entries, for which 
$A^n = RS ,  B^n = SR,  A R = R B,   \mbox{ and }  B S = S A.$   
Two group-graded rings $T_1$ and $T_2$ are called {\it graded Morita equivalent} in case there is an equivalence functor (compatible with the gradings)  between the categories   $\mathsf{Gr}$-$T_1$ and $\mathsf{Gr}$-$T_2$ of  graded  right modules over $T_1$ and $T_2$, respectively.        

 Let $k$ be any field, and $E$ and $F$ any directed graphs.     The Leavitt path algebras $L_k(E)$ and $L_k(F)$ are  $\mathbb{Z}$-graded in a natural way.  It is known that for two finite sink-free graphs $E$ and $F$,  graded Morita equivalence of the Leavitt path algebras $L_k(E)$ and $L_k(F)$ implies that the adjacency matrices  of $E$ and $F$ are shift equivalent.   Hazrat's Conjecture (see \cite[Remark 16]{RoozbehDyn})  posits that the converse of this implication holds as well.    Here is an equivalent  formulation of the Conjecture.

\begin{hazrat}
Let $k$ be any field.  Let $E$ and $F$ be finite graphs with no sinks.   Then the following are equivalent.
\begin{quote}
\begin{enumerate}
\item[$\mathrm{(GrME)}$] The Leavitt path algebras $L_k(E)$ and $L_k(F)$ are graded Morita equivalent.

\item[$\mathrm{(GrK)}$] There is an order preserving $\mathbb{Z}[x,x^{-1}]$-module isomorphism from $K_0^{\mathrm{gr}}(L_k(E))$ to $K_0^{\mathrm{gr}}(L_k(F))$.

\item[$\mathrm{(SE)}$] The adjacency matrices of $E$ and $F$ are shift equivalent.
\end{enumerate}
\end{quote}
\end{hazrat}

The implication  ${\rm (GrME)} \implies {\rm (SE)}$ of the Conjecture is established in \cite{RoozbehDyn}.    Hazrat achieves this by first proving the equivalence of statements ${\rm (GrK)}$ and  $ {\rm (SE)}$ in  \cite[Corollary 12]{RoozbehDyn}, and then showing that    ${\rm (GrME)} \implies {\rm (GrK)}$ as part of the proof of \cite[Proposition 15(3)]{RoozbehDyn}.    The proofs of these results use some significant machinery from symbolic dynamics (e.g., Krieger's dimension groups).     The implication in the Conjecture which has yet to be established (or disproved) is      ${\rm (SE)} \implies {\rm (GrME)}$.     (``One thinks that the converse of the statement ${\rm (GrME)} \Rightarrow {\rm (SE)}$ is also valid."  See \cite[Remark 16]{RoozbehDyn}.)    Additional information about the Conjecture, and variants thereof, can be found in  \cite{AraPardo} and \cite{RoozbehAnnalen}. 

The main contribution we make  in this article towards resolving   Hazrat's Conjecture consists of two steps.  First, we find  an equivalent formulation of Condition ${\rm (SE)}$ (which we denote by ${\rm (MSE)}$);  Condition ${\rm (MSE)}$ is an assertion about the existence of isomorphisms between various bimodules, and is appropriately  referred to as {\it module shift equivalence}.   We then subsequently prove an implication of the form 
$${\rm (MSE)} \mbox{ coupled with a compatibility condition on the isomorphisms }    \implies  \  {\rm (GrME)}.$$
  
The article is organized as follows.  In Section \ref{PrelimSection} we set notation and remind the reader of various germane ring-theoretic and graph-theoretic ideas.     
 In Section \ref{3iff3primeSection} we establish our first main result (Theorem \ref{thm:shift-equivalence-module}), in which we show the equivalence of the aforementioned Conditions ${\rm (SE)}$ and ${\rm (MSE)}$.    The significance of Theorem~\ref{thm:shift-equivalence-module} is that it allows one to replace Condition~{\rm (SE)} of Hazrat's Conjecture (which is a condition on adjacency matrices of the graphs) with Condition~${\rm (MSE)}$ (which is a condition on isomorphisms between tensor products of bimodules associated to the graphs). This is useful because Condition~${\rm (MSE)}$ allows for a natural way to append an additional hypothesis,   thereby allowing us to establish a variant of the one not-yet-resolved implication in Hazrat's Conjecture.  
 
We give here an intuitive description of the motivation which led to the idea of replacing Condition~{\rm (SE)} with   Condition~${\rm (MSE)}$.  For shift equivalent matrices $A$ and $B$ we have  $A(RS) = AA^n$.  As well, we have 
$ A (RS) = (AR) S = (RB) S = R (BS) = R (SA) = (RS) A = A^n A$, so that $A^nA = A(RS) = AA^n$.  The point to be made is that we may interpret $A(RS)$ both as $AA^n$ and as $A^nA$;  obviously these two interpretations lead to the same matrix.  In Section \ref{3iff3primeSection}  we show how to associate any rectangular matrix $M$ having nonnegative integer entries with a bimodule $X(M)$ (over appropriate algebras).   In the bimodule setting, these  computations involving products of matrices will lead to an analogous statement regarding isomorphisms of bimodules:    
$$X(A)^{\otimes n} \otimes X(A)  \ \cong \ X(A) \otimes (X(R) \otimes X(S)) \ \cong \  X(A) \otimes X(A)^{\otimes n}.$$
 Of course there is only one way for two matrices to be equal.  In contrast,   bimodules may be isomorphic via more than one isomorphism.  Recast, the diagram of {\it matrix  equalities}  (i.e., Condition ${\rm (SE)}$)
$$
\xymatrix{
A (RS) \ar@{=}[r] \ar@{=}[d] & (RS)A \ar@{=}[d] \\
A A^n \ar@{=}[r] & A^n A
}
$$
will be shown to yield   a   diagram of  {\it isomorphisms of bimodules} (i.e., Condition ${\rm (MSE)}$)
$$
\xymatrix{
X(A) \otimes (X(R) \otimes X(S))  \ar[d] 
\ar[r]
   & (X(R) \otimes X(S)) \otimes X(A)  \ar[d]  \ar[l] \\
 \ar[u] X(A)  \otimes (X(A)^{\otimes n}) \ar[r]   & \ar[l] (X(A)^{\otimes n}) \otimes X(A) \ar[u]
}
$$
The point then is that we may add a natural  condition on top of the existence of this diagram of bimodule isomorphisms:  specifically, we may hypothesize  that there exists such a diagram of isomorphisms that commutes.

Suppose $E$ and $F$ are finite graphs with no sinks whose adjacency matrices are shift equivalent.   In Section~\ref{sec-const-module} we use Condition ${\rm (MSE)}$ to construct an algebraic analogue of Ery\"uzl\"u's $C^*$-correspondence \cite{Eryuzlu}.  Specifically, we  build an  $L_k(E)$--$L_k(F)$-bimodule $Y$, which we call a \emph{bridging bimodule}.   While the right $L_k(F)$-module action on $Y$ will be the obvious one, the construction of a left $L_k(E)$-module action on $Y$ will require some significant effort, which we complete in Theorem \ref{thm:bimodule-action}.  We then establish basic properties of bridging bimodules throughout Section \ref{bimodule-key-properties-sec}.     

In the final section (Section \ref{Theorem-B-sec}) we use the previous work to establish our second main result, Theorem~\ref{thm:main2}.   In it, we show that Condition ${\rm (SE)}$ in Hazrat's Conjecture (shift equivalence of the adjacency matrices), when reformulated as Condition ${\rm (MSE)}$ and paired with the aforementioned commutativity condition, implies Condition ${\rm (GrME)}$ of the Conjecture (graded equivalence of the Leavitt path algebras).  The tensor product functor  induced  by the  bridging bimodule $Y$ provides the graded equivalence between the categories of graded modules over the two Leavitt path algebras.  

 We conclude the article with two subsections.  In Subsection \ref{SubsectionSSE} we show how Theorem \ref{thm:main2} yields that {\it strong} shift equivalence implies graded equivalence of the Leavitt path algebras, thereby slightly generalizing a known result, while utilizing  a vastly different approach for the proof.  Finally, in Subsection \ref{SubsectionHazratConj}, we discuss the current ``state of affairs" regarding the resolution of the Hazrat Conjecture.   We point out various conditions which imply, and which are implied by, the statement that two Leavitt path algebras $L_k(E)$ and $L_k(F)$ are graded Morita equivalent (where $E$ and $F$ are finite graphs with no sinks).   We then remark on how various  consequences would result from the establishment of  any one of these implications.   
 
As it turns out, a number of the key constructions in our work can be viewed in a more general categorical / functorial framework, see for instance Remarks \ref{polymorphism-category-rem}, \ref{poly-and-matrix}, \ref{BiModremark},  and \ref{categoricalperspectiveremark}.   While we will not  pursue this point of view in the current article, we  believe that these observations may provide a starting point for future investigations.

\section{Preliminaries}\label{PrelimSection}

 We set some notation.  We denote the set of nonnegative integers by $\mathbb{Z}_{\geq 0}$, and the set of positive integers by $\mathbb{N}$.     We write functors and functions   (including left-module homomorphisms) on the left, so that $g \circ f$ will always mean ``first $f$, then $g$".  The expression ${\rm id}$ will denote an identity morphism, interpreted in the appropriate context.  
 
 A (directed) graph $E = (E^0, E^1, s,r)$ consists of a {\it vertex set} $E^0$, an {\it edge set} $E^1$, and {\it source} and {\it range} functions $s, r: E^1 \rightarrow E^0$.  For $v\in E^0$, the set of edges $\{ e\in E^1 \ | \ s(e)=v\}$ is denoted by $s^{-1}(v)$, and the set of edges $\{ e\in E^1 \ | \ r(e)=v\}$ by $r^{-1}(v)$. The graph $E$ is called {\it finite} in case both $E^0$ and $E^1$ are finite sets.  A vertex $v$ is a {\it sink vertex} (or simply a {\it sink}) in case $s^{-1}(v) = \emptyset$;  i.e., in case $v$ is not the source vertex of any edge in $E$.   
A {\it path} $\alpha$ in $E$ is a sequence $e_1 e_2 \cdots e_n$ of edges in $E$ for which $r(e_i) = s(e_{i+1})$ for all $1 \leq i \leq n-1$.  
We say that such $\alpha$ has {\it length} $n$ (denoted by $\ell(\alpha) = n$).  We write $s(\alpha) = s(e_1)$ and $r(\alpha) = r(e_n)$.  We view each vertex $v \in E^0$ as a path of length $0$, and denote $v = s(v) = r(v)$.    For $n\geq 0$ we let $E^n$ denote the paths of length $n$ in $E$, and define ${\rm Path}(E) = \bigcup_{n\geq 0}E^n$.    For $v\in E^0$ and $n\geq 0$  we define $vE^n := \{ \alpha \in E^n : s(\alpha) = v \}$.

\smallskip

     Let $k$ be a field, and let $E = (E^0, E^1, s,r)$ be a directed  graph with vertex set $E^0$ and edge set $E^1$.   The {\em Leavitt path algebra} $L_k(E)$ {\em of $E$ with coefficients in $k$} is  the $k$-algebra generated by a set $\{v\mid v\in E^0\}$, together with a set of symbols $\{e,e^*\mid e\in E^1\}$, that satisfy the following relations:

\smallskip

(V)   \ \ \  \ $vu = \delta_{v,u}v$ for all $v,u\in E^0$, \  

(E1) \ \ \ $s(e)e=er(e)=e$ for all $e\in E^1$,

(E2) \ \ \ $r(e)e^*=e^*s(e)=e^*$ for all $e\in E^1$,

(CK1) \ $e^*e'=\delta _{e,e'}r(e)$ for all $e,e'\in E^1$, and

(CK2)  \ 
  $v = \sum_{  e \in vE^1   } ee^\ast$ for every   $v\in E^0$ for which   $0 < |s^{-1}(v)| < \infty$. 

\noindent 
An alternate description of $L_k(E)$ may be given as follows.  For any graph $E$ let $\widehat{E}$ denote the ``double graph" of $E$, obtained by adding to $E$ an edge $e^*$ in a reversed direction for each edge $e\in E^1$.   Then $L_k(E)$ is the usual path $k$-algebra $k\widehat{E}$, modulo the ideal generated by the relations (CK1) and (CK2).

\smallskip

It is easy to show that $L_k(E)$ is unital if and only if $E^0$ is finite; in this case, $1_{L_k(E)} = \sum_{v\in E^0}v$.    Every nonzero element of $L_k(E)$ may be written as $\sum_{i=1}^n k_i \alpha_i \beta_i^*$, where $k_i$ is a nonzero element of $k$, and each of the $\alpha_i$ and $\beta_i$ are paths in $E$ with $r(\alpha_i) = r(\beta_i)$.  The  map $kE \rightarrow L_k(E)$ given by the $k$-linear extension of $\alpha \mapsto \alpha$ (for $\alpha \in {\rm Path}(E)$) is an injection of $k$-algebras by \cite[Corollary 1.5.12]{TheBook}.   

If $A$ is any unital $k$-algebra and $E$ is any  graph,  then a {\it Cuntz Krieger} $E$-{\it family in} $A$ is a subset $\{P_v \ | \ v\in E^0 \} \cup \{S_e,  S_{e^*} \ | \ e\in E^1\}$ of $A$ which satisfies  the five analogous relations (V), (E1), (E2), (CK1), and (CK2) given above.   By the universal property of the free associative $k$-algebra on the symbols $\{v \ | \ v\in E^0 \} \sqcup \{e, e^* \ | \ e\in E^1\}$, if $\{P_v \ | \ v\in E^0 \} \cup \{S_e , S_{e^*} \ | \ e\in E^1\} $ is a Cuntz Krieger $E$-family in $A$ then there exists a unique $k$-algebra homomorphism $\varphi: L_k(E) \to A$ for which $\varphi(v) = P_v$, $\varphi(e) = S_e$, and $\varphi(e^*) = S_{e^*}$ for all $v\in E^0$ and $e\in E^1$.   (See e.g., \cite[Remark 1.2.5]{TheBook}.)

For any graph $E$ and field $k$ there is a $\mathbb{Z}$-grading on $L_k(E)$ defined as follows.  For each $n\in \mathbb{Z}$, the $n$th graded component $(L_k(E))_n$ is the $k$-subspace of $L_k(E)$ generated by elements of the form $\alpha \beta^*$ where $\alpha$ and $\beta$ are paths in $E$ for which $\ell(\alpha) - \ell(\beta) = n$.   We call this grading the {\it standard} $\mathbb{Z}$-grading on $L_k(E)$.   

For additional information about Leavitt path algebras, see e.g. \cite{TheBook}.  

For a ring $R$ graded by the abelian group $(\mathcal{G},+)$, a  $\mathcal{G}$-{\it graded right (resp., left) } $R$-module is a right (resp., left) $R$-module $M$ for which  $M = \oplus_{g\in \mathcal{G}}M_g$ as abelian groups, and for which $m_g r_h \in M_{g+h}$  (resp., $r_h m_g\in M_{g+h}$)  for all $g,h\in \mathcal{G}$.  When both $R$ and $S$ are  $\mathcal{G}$-graded rings, and ${}_S M _R$ is an $S$--$R$-bimodule, then $M$ is a $\mathcal{G}$-{\it graded bimodule} in case $M = \oplus_{g\in \mathcal{G}}M_g$ as abelian groups in such a way that this decomposition simultaneously makes  $M$ both a $\mathcal{G}$-graded left $S$-module and a $\mathcal{G}$-graded right $R$-module.     Throughout the article, the word ``graded" will always mean ``$\mathbb{Z}$-graded", unless otherwise indicated.

As described in the introduction, we wish to characterize  shift equivalence of the adjacency matrices of graphs stated in Condition~{\rm (SE)} of Hazrat's Conjecture in terms of isomorphisms of  tensor products of  bimodules associated to those graphs.  Part of our reason for doing so is that in addition to considering \emph{whether} those tensor product bimodules are isomorphic, we will also be in position to  ask \emph{how} the bimodules are isomorphic.  Specifically,  we will consider whether the  isomorphisms satisfy certain  (quite natural) commutativity conditions.

We will see that multiplication of adjacency matrices corresponds to  tensor product of the bimodules. By associativity of matrix multiplication we have  $(AB)C = A(BC)$;  and when we are only concerned with equality we simply write $ABC$ for this common value.  We also have associativity for tensor products:  if $X$ is an $R$--$S$-bimodule, $Y$ is an $S$--$T$-bimodule, and $Z$ is a $T$--$U$-bimodule, then $(X \otimes_S Y) \otimes_T Z \cong X \otimes_S (Y \otimes_T Z)$.  However, since we will need to explicitly invoke these isomorphisms, whenever we use this associativity of tensor products  we will need to keep track of the isomorphism implementing the associativity.  To do so, we  use the following notation.

\begin{ntn}[Associativity Isomorphism for Bimodules] \label{associativity-iso-notation}
If $X$ is an $R$--$S$-bimdoule, $Y$ is an $S$--$T$-bimodule, and $Z$ is a $T$--$U$-bimodule, we let
$$\alpha_{X,Y,Z} : (X \otimes_S Y) \otimes_T Z \to X \otimes_S (Y \otimes_T Z)$$
denote the unique $R$--$U$-bimodule isomorphism for which $$\alpha_{X,Y,Z} ((x \otimes y) \otimes z) = x \otimes (y \otimes z)$$
for all $x\in X$, $y\in Y$, and $z\in Z$.  
\end{ntn}

\section{Characterizing Shift Equivalence of Graphs \\ in terms of Shift Equivalence of Modules}\label{3iff3primeSection}

Our goal in this section is to prove Theorem~\ref{thm:shift-equivalence-module}, which is the first of our two main results.  Theorem~\ref{thm:shift-equivalence-module} establishes that shift equivalence of the adjacency matrices of two graphs is equivalent to a condition (that we call Condition~${\rm (MSE)}$)  on isomorphisms between  bimodules over algebras associated to the graphs.  Thus Condition~${\rm (MSE)}$ provides a notion of ``shift equivalence of modules"; rephrased,  Theorem~\ref{thm:shift-equivalence-module} asserts that shift equivalence of graphs is equivalent to shift equivalence of the associated modules.

\subsection{Polymorphisms between Sets}

Adjacency matrices of graphs are always square.  To deal with the rectangular matrices that appear in the definition of shift equivalence, we are led naturally to consider bipartite graphs consisting of edges going from a set of one cardinality to a set of another cardinality.  We develop the notion of a polymorphism to make this idea precise.

\begin{dfn}
A \emph{polymorphism} from a set $V$ to a set $W$ is a $5$-tuple $E := (V, W, E^1, r, s)$  consisting of sets $V$, $W$, and $E^1$ with functions $s: E^1 \to V$ and $r : E^1 \to W$.  When the maps $r$ and $s$ are understood, we shall commonly use the shorthand ${}_VE_W$ in analogy with the notation for bimodules.  We say that a polymorphism is {\it finite} when $V$, $W$, and $E^1$ are all finite sets. 

Note  that a directed graph is the special case of a polymorphism with $V = W$.  For fixed sets $V$ and $W$, two polymorphisms $E := (V, W, E^1, r_E, s_E)$ and $F := (V, W, F^1, r_F, s_F)$ are \emph{isomorphic} when there exists a bijection $\phi : E^1 \to F^1$ with $s_F ( \phi(e)) = \phi(s_E(e))$ and $r_F ( \phi(e)) = \phi(r_E(e))$ for all $e \in E^1$.  (Equivalently, an isomorphism from $E$ to $F$ is a function from edges of $E$ to edges of $F$ with the property that for all $v \in V$ and $w \in W$, the function maps the set of edges in $E$ from $v$ to $w$ bijectively onto the set of edges in $F$ from $v$ to $w$.  Consequently, if $E$ and $F$ are two polymorphisms from $V$ to $W$, then $E$ is isomorphic to $F$ if and only if for all $v \in V$ and $w \in W$ the polymorphisms $E$ and $F$ have the same number of edges from $v$ to $w$.) 
\end{dfn}

\begin{dfn} \label{adjacency-matrix-def}
For any polymorphism  $E := (V, W, E^1, r, s)$, the \emph{adjacency matrix} of $E$, denoted $A_E$, is the $V \times W$ matrix with entries defined by setting
$$A_E(v,w) := \# \{ e \in E^1 : s(e) = v \text{ and } r(e) = w \} $$
for all $v \in V$ and $w \in W$.
Note that the adjacency matrix of a finite polymorphism is a rectangular matrix with entries in $\mathbb{Z}_{\geq 0}$.   Conversely, any finite matrix indexed by $V \times W$ and with entries in $\mathbb{Z}_{\geq 0}$ determines a finite polymorphism by simply drawing $A(v,w)$ edges from $v$ to $w$ for all $v \in V$ and $w \in W$.  Moreover, if $E$ and $F$ are polymorphisms from $V$ to $W$, then $E$ is isomorphic to $F$ if and only if $A_E = A_F$.
\end{dfn}

\begin{dfn}[Product of Polymorphisms]
If $E := (U, V, E^1, r_E, s_E)$ is a polymorphism from $U$ to $V$ and $F := (V, W, F^1, r_F, s_F)$ is a polymorphism from $V$ to $W$, we define $$E \times F := (U, W, (E \times F)^1, r_{E \times F}, s_{E \times F})$$ to be the polymorphism from $U$ to $W$ with
$$(E \times F)^1 := \{ (e,f) : e \in E^1, f \in F^1, \text{ and } r_E(e) = s_F(f) \}$$
and with $r_{E \times F} (e,f) = r_F(f)$ and $s_{E \times F} (e,f) = s_E(e)$.
One can verify that the adjacency matrices of these polymorphisms satisfy $A_{E \times F} = A_E A_F$.  (Note:  It would probably be more descriptive to use the notation $E \times _V F$ in place of $E \times F$, thereby emphasizing the requirement that the codomain of $E$ coincides with the domain of $F$.  However, this notation quickly becomes cumbersome, so we find it convenient to suppress the $V$ on the product symbol.)

If $E := (E^0, E^1, r, s)$ is a graph, for any $n \in \mathbb{N}$ we denote the $n$-fold product of $E$ with itself by $E^{\times n} := E \times \cdots \times E$.  Note the special cases that $E^{\times 1} = E$ and $E^{\times 2} = E \times E$. Also observe that $A_{E^{\times n}} = A_E^n$; that is, the adjacency matrix of $E^{\times n}$ is the $n$th power of the adjacency matrix of $E$.
\end{dfn}

\begin{rmk} \label{polymorphism-category-rem}
We can view (the isomorphism class of) a polymorphism $E := (V, W, E^1, r_E, s_E)$ as a morphism from $V$ to $W$ with the product of polymorphisms providing a composition of morphisms.  More precisely, we may form a category $\mathsf{Poly}$ whose objects are finite sets and whose morphisms are isomorphism classes of finite polymorphisms.  If $E$ is a polymorphism from $U$ to $V$ and if $F$ is a polymorphism from $V$ to $W$, with $[E]$ and $[F]$ denoting the respective isomorphism classes of these polymorphisms, composition of morphisms in this category is defined as
$$ [F] \circ [E] := [ E \times F].$$
One can verify that the axioms of a category are satisfied, and that for any set $V$ the identity morphism  on $V$ is the isomorphism class of the polymorphism $\operatorname{Id}_V := (V, V, V, \operatorname{id}_V, \operatorname{id}_V)$.  Note that with this categorical viewpoint, a finite graph $E := (E^0, E^1, r, s)$ determines an endomorphism on the set $E^0$. 
\end{rmk}

\begin{rmk}\label{poly-and-matrix}
We may define a category $\mathsf{Mat}(\mathbb{Z}_{\geq 0})$ whose objects are elements of $\mathbb{Z}_{\geq 0}$, and for two objects $m,n \in \mathbb{Z}_{\geq 0}$ the morphisms from $m$ to $n$ are the $m \times n$ matrices with entries in $\mathbb{Z}_{\geq 0}$, with composition given by matrix multiplication; i.e., if $A$ goes from $m$ to $n$, and if $B$ goes from $n$ to $l$, then $B \circ A := AB$.  It is straightforward to verify that $\mathsf{Poly}$ is equivalent to the category $\mathsf{Mat}(\mathbb{Z}_{\geq 0})$, and an equivalence (i.e., a full, faithful, and dense functor) may be obtained as follows:  on objects the functor assigns each set to its cardinality  $V \mapsto |V|$, while on morphisms the functor assigns each isomorphism class of a polymorphism to the adjacency matrix of a representative $[E] \mapsto A_E$.
\end{rmk}

\begin{dfn}[The Bimodule of a Polymorphism] \label{graph-bimodule-def}
Let $k$ be a field.  For a set $V$, we let $kV$ denote the $k$-vector space with basis $V$.  We may also make $kV$ into a $k$-algebra by defining $vw = \delta_{v,w} v$ for $v, w \in V$.
Observe that the $k$-algebra $kV$ is simply the direct sum $\bigoplus_{v \in V} k$ with one copy of $k$ for each element of $V$.   

Clearly $kV$ is unital if and only if $V$ is a finite set.   But in any event, $kV$ contains a set of local units, and in this setting the notion of a $kV$-{\it module}  is well-understood.

If $E := (V, W, E^1, r, s)$ is a polymorphism from $V$ to $W$, we make the vector space $kE^1$ into a $kV$--$kW$-bimodule, called the \emph{polymorphism bimodule}, with left and right module actions defined by the $k$-linear extensions of
$$
v \cdot e := \begin{cases} e & \text{ if $s(e) = v$} \\ 0 & \text{ if $s(e) \neq v$} \end{cases} \quad \qquad \text{ and } \quad \qquad e \cdot w := \begin{cases} e & \text{ if $r(e) = w$} \\ 0 & \text{ if $r(e) \neq w$} \end{cases}
$$
for $v \in V$, $e \in E^1$, and $w \in W$.  
In particular, when $E = (E^1, E^0, r, s)$ is a graph, then $kE^1$ is a $kE^0$--$kE^0$-bimodule.   The aforementioned embedding of $kE^1$ into $L_k(E)$   allows us to view $kE^1$ as a $kE^0$-submodule of $L_k(E)$.   
\end{dfn}

 The following result shows that the bimodules between two $k$-algebras of the form $kV$   are precisely the polymorphism bimodules.

\begin{prp} \label{bimodule-include-prop}
Let $k$ be a field, let $V$ and $W$ be sets, and let $M$ be a $kV$--$kW$-bimodule.  Then there exists a polymorphism $E := (V, W, E^1, r, s)$ such that ${kE^1} \cong M$ as a $kV$--$kW$-bimodule.  Moreover, such a  polymorphism $E$ is unique up to isomorphism.
\end{prp}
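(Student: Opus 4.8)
The plan is to exploit the fact that $kV$ and $kW$ are rings with the orthogonal idempotents $\{v\}_{v \in V}$ and $\{w\}_{w \in W}$ as local units, so that (under the standing convention that modules over such rings are unitary) any $kV$--$kW$-bimodule $M$ decomposes as an internal direct sum of $k$-subspaces
$$M = \bigoplus_{v \in V,\, w \in W} vMw, \qquad \text{where } vMw := \{\, m \in M : v\cdot m = m = m \cdot w \,\}.$$
First I would establish this decomposition: expanding a unitary expression $m = \sum_i r_i m_i$ (with $r_i \in kV$) in the idempotent basis $\{v\}$ shows $M = \sum_v vM$, the orthogonality relations $uv = \delta_{u,v}v$ show this sum is direct, and running the analogous argument on the right refines it to the displayed double direct sum. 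Each summand $vMw$ is a $k$-subspace since $k$ acts centrally.

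For existence I would then build $E$ directly from this decomposition. For each pair $(v,w)$ choose a $k$-basis $B_{v,w}$ of $vMw$, set $E^1 := \bigsqcup_{v,w} B_{v,w}$, and define $s(e) := v$ and $r(e) := w$ for $e \in B_{v,w}$; this is a polymorphism from $V$ to $W$ whose edge count from $v$ to $w$ is exactly $\dim_k vMw$. The $k$-linear map $\psi : kE^1 \to M$ sending each basis edge $e \in B_{v,w}$ to the corresponding element of $vMw \subseteq M$ is a bijection, since it carries the basis $E^1$ of $kE^1$ onto the basis $\bigcup_{v,w} B_{v,w}$ of $M$. It remains to check that $\psi$ is a bimodule map: for the left action $u \cdot e$ equals $e$ when $u = s(e)$ and $0$ otherwise, while for $e \in B_{v,w}$ we have $u \cdot \psi(e) = u \cdot (v \cdot \psi(e)) = (uv)\cdot \psi(e) = \delta_{u,v}\,\psi(e)$, and these agree; the right action is symmetric. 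Hence $kE^1 \cong M$.

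For uniqueness, suppose $E$ and $F$ are polymorphisms from $V$ to $W$ admitting a bimodule isomorphism $\theta : kE^1 \to kF^1$. Since $\theta$ commutes with left multiplication by each $v$ and right multiplication by each $w$, it restricts to a $k$-vector-space isomorphism $v(kE^1)w \to v(kF^1)w$ for every pair $(v,w)$. But $v(kE^1)w$ has as a $k$-basis precisely the edges of $E$ from $v$ to $w$, so $\dim_k v(kE^1)w = A_E(v,w)$, and likewise for $F$; therefore $A_E(v,w) = A_F(v,w)$ for all $v,w$. By the criterion recorded in Definition~\ref{adjacency-matrix-def}, equality of adjacency matrices is exactly isomorphism of polymorphisms, so $E \cong F$.

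The only genuinely delicate point is the opening decomposition, and in particular the care required when $V$ or $W$ is infinite: then $kV$ and $kW$ are non-unital, so I must invoke the unitary-module convention (rather than a global identity element) to guarantee that $M = \bigoplus_v vM$ with the sum both spanning and direct, and I must read ``dimension'' and ``number of edges'' as cardinals throughout, so that the vector-space isomorphisms appearing in the uniqueness argument translate into honest edge bijections. Once this bookkeeping is in place, the remaining verifications are routine.
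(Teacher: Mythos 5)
Your proof is correct and follows essentially the same route as the paper's: build $E^1$ from a chosen $k$-basis of each corner $vMw$, and for uniqueness observe that a bimodule isomorphism restricts to isomorphisms $v(kE^1)w \to v(kF^1)w$, forcing the edge counts, hence the adjacency matrices, to agree. The only difference is that you make explicit the decomposition $M = \bigoplus_{v,w} vMw$ and the unitary-module convention it rests on when $V$ or $W$ is infinite, a point the paper folds into ``it is straightforward to verify.''
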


\begin{proof}
Let $v \in V$ and $w \in W$.  Then $vMw$ is a $k$-submodule of $M$, and hence a $k$-vector space.  For each $v \in V$ and $w \in W$ let $\{ e_i^{v,w} : i \in I_{v,w} \}$ be a basis for $vMw$ indexed by the set $I_{v,w}$.  Define 
$$E^1 := \{  e_i^{v,w} : \text{$v \in V$, $w \in W$, and $i \in I_{v,w}$} \}$$
and let $E := (V, W, E^1, r, s)$ be the polymorphism with $s(e_i^{v,w}) = v$ and $r(e_i^{v,w}) = w$ for all $e_i^{v,w} \in E^1$.  Recall that $kE^1$ is the $kV$--$kW$-bimodule with
$$v \cdot e \cdot w := \begin{cases} e & \text{ if $s(e)=v$ and $r(e)=w$} \\ 
0 & \text{ otherwise.}\end{cases}$$
Since $E^1$ is a $k$-basis for $kE^1$, it is straightforward to verify that the map
$$E^1 \ni e_i^{v,w}  \mapsto e_i^{v,w} \in M$$
extends to a $kV$--$kW$-bimodule isomorphism from ${kE^1}$ to $M$, and hence ${kE^1} \cong M$ as $kV$--$kW$-bimodules.

For uniqueness of $E$, suppose $E := (V, W, E^1, r_E, s_E)$ and $F := (V, W, F^1, r_F, s_F)$ are polymorphisms from $V$ to $W$ with ${kE^1} \cong {kF^1}$ as $kV$--$kW$-bimodules.  Let $\phi : {kE^1} \to {kF^1}$ be a $kV$--$kW$-bimodule isomorphism.  Then for all $v \in V$ and $w \in W$ we have
$$\phi (v{kE^1}w) = v \phi({kE^1})w = v{kF^1}w,$$
and hence $\phi$ restricts to an isomorphism $\phi : v{kE^1}w \to v{kF^1}w$.
Since $v{kE^1}w = \operatorname{span}_k \{ e \in E^1 : s_E(e) = v \text{ and } r_E(e) = w \}$, we conclude $\{ e \in E^1 : s_E(e) = v \text{ and } r_E(e) = w \}$ is a basis for $v{kE^1}w$.  Likewise, we have $v{kF^1}w = \operatorname{span}_k \{ e \in F^1 : s_F(e) = v \text{ and } r_F(e) = w \}$, so that $\{ e \in F^1 : s_F(e) = v \text{ and } r_F(e) = w \}$ is a basis for $v{kF^1}w$.  Since $\phi : {kE^1} \to {kF^1}$ is an isomorphism, the cardinalities of these bases are the same.  Hence for all $v \in V$ and $w \in W$, the cardinality of the set of edges in $E$ from $v$ to $w$ is equal to the cardinality of the set of edges in $F$ from $v$ to $w$.  Thus $E \cong F$ as polymorphisms.
\end{proof}

\begin{prp} \label{bimodule-functor-prop}
Let $E := (U, V, E^1, r_E, s_E)$ be a polymorphism from $U$ to $V$, and let $F := (V, W, F^1, r_F, s_F)$ be a polymorphism from $V$ to $W$. Then ${kE^1} \otimes_{kV} {kF^1} \cong k({E \times F})^1$ as $kU$--$kW$-bimodules.
\end{prp}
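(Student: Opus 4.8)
The plan is to write down an explicit $kU$--$kW$-bimodule isomorphism and verify directly that it is well-defined and invertible, rather than argue abstractly. This is the preferable route here since the broader program of the paper requires keeping track of precisely which isomorphism implements an identification, so having the map in closed form is more useful downstream than a mere existence statement.

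First I would record how simple tensors behave over $kV$. For $e \in E^1$ and $f \in F^1$ we have $e = e \cdot r_E(e)$, while $r_E(e) \cdot f = f$ exactly when $s_F(f) = r_E(e)$ and $r_E(e) \cdot f = 0$ otherwise. Hence inside $kE^1 \otimes_{kV} kF^1$ one computes $e \otimes f = e \cdot r_E(e) \otimes f = e \otimes r_E(e) \cdot f$, which equals $e \otimes f$ when $r_E(e) = s_F(f)$ and equals $0$ when $r_E(e) \neq s_F(f)$. This matches the definition of $(E \times F)^1$ exactly: the nonzero simple tensors are indexed by the same data $(e,f)$ with $r_E(e) = s_F(f)$ that index the edges of $E \times F$.

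Next I would construct the map. Define a $k$-bilinear map $\beta : kE^1 \times kF^1 \to k(E \times F)^1$ on basis pairs by $\beta(e,f) = (e,f)$ if $r_E(e) = s_F(f)$ and $\beta(e,f) = 0$ otherwise, extended bilinearly. The key point to check is that $\beta$ is $kV$-balanced, i.e.\ that $\beta(e \cdot v, f) = \beta(e, v \cdot f)$ for every vertex $v \in V$; a short case analysis (splitting on whether $r_E(e) = s_F(f)$) shows both sides agree for every $v$, equaling $(e,f)$ precisely when $r_E(e) = s_F(f) = v$ and vanishing otherwise. By the universal property of $\otimes_{kV}$ this induces a $k$-linear map $\psi : kE^1 \otimes_{kV} kF^1 \to k(E \times F)^1$ with $\psi(e \otimes f) = (e,f)$ when $r_E(e) = s_F(f)$ and $\psi(e \otimes f) = 0$ otherwise. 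For the inverse I would define $\phi : k(E \times F)^1 \to kE^1 \otimes_{kV} kF^1$ on the basis by $\phi(e,f) = e \otimes f$, extended linearly; that $\psi \circ \phi = \mathrm{id}$ is immediate on basis elements, and that $\phi \circ \psi = \mathrm{id}$ uses exactly the computation of the first step, since on a simple tensor with $r_E(e) \neq s_F(f)$ both $\psi(e \otimes f)$ and $e \otimes f$ vanish. Finally, compatibility with the actions is routine bookkeeping: the left $kU$-action is governed by $s_E$ on both sides (and by $s_{E \times F}$ on $k(E \times F)^1$), and the right $kW$-action by $r_F$ (and $r_{E \times F}$), so $\psi$ intertwines both.

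I expect the only genuinely delicate point to be the verification that $\beta$ descends to the tensor product, i.e.\ the $kV$-balancing; everything else reduces to the basis bookkeeping just described. As an alternative I note that one could instead invoke Proposition~\ref{bimodule-include-prop} to realize $kE^1 \otimes_{kV} kF^1$ as $kG^1$ for a unique polymorphism $G$, and then compute $\dim_k u(kE^1 \otimes_{kV} kF^1)w = \sum_{v \in V} A_E(u,v) A_F(v,w) = A_{E \times F}(u,w)$ to conclude $G \cong E \times F$; but since the explicit map $e \otimes f \mapsto (e,f)$ is what later sections need, I would present the direct construction above.
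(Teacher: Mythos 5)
Your proposal is correct and follows essentially the same route as the paper's own proof: both define the forward map on simple tensors via the universal property of $\otimes_{kV}$ (sending $e \otimes f$ to $(e,f)$ when $r_E(e) = s_F(f)$ and to $0$ otherwise), define the inverse on the basis of $k(E\times F)^1$, and use the observation $e \otimes f = e \otimes r_E(e)f = 0$ when $r_E(e) \neq s_F(f)$ to see the compositions are identities. The only differences are cosmetic (your names $\psi$ and $\phi$ are swapped relative to the paper's, and you spell out the $kV$-balancing check that the paper leaves to the reader).
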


\begin{proof}
The vector space $kE^1$ has $E^1$ as a basis, and the vector space $kF^1$ has $F^1$ as a basis.  Thus any element of ${kE^1}$ may be uniquely written as $\sum_{e \in E^1} \lambda_e e$ with $\lambda_e \in k$ for all $e \in E^1$, and any element of ${kF^1}$ may be uniquely written as $\sum_{f \in F^1}\mu_f f$ with $\mu_f \in k$ for each $f \in F^1$.  Define $\phi : {kE^1} \times {kF^1} \to k(E \times F)^1$ by
$$ \phi \left( \sum_{e \in E^1} \lambda_e e, \sum_{f \in F^1} \mu_f f \right) := \sum_{ \{ (e,f) \in E^1 \times F^1 : r_E(e) = s_F(f) \} } \lambda_e \mu_f (e,f).$$
One can verify that this map is a $kV$-balanced $kU$-$kW$-bilinear map from ${kE^1} \times {kF^1}$ to $k (E \times F)^1$.  Thus by the universal property of the tensor product, $\phi$ induces a $kU$-$kW$-linear map $\widetilde{\phi} :  {kE^1} \otimes_{kV} {kF^1} \to k(E \times F)^1$ with
$$ \widetilde{\phi} \left( \sum_{e \in E^1} \lambda_e e \otimes \sum_{f \in F^1} \mu_f f \right) := \sum_{ \{ (e,f) \in E^1 \times F^1 : r_E(e) = s_F(f) \} } \lambda_e \mu_f (e,f).$$
Furthermore, since $\{ (e,f) \in E^1 \times F^1 : r_E(e) = s_F(f) \}$ is a basis for the $k$-vector space $k(E \times F)^1$, we may define a $k$-linear map $\psi : k(E \times F)^1 \to {kE^1} \otimes_{kV} {kF^1}$ with the property that $\psi (e,f) = e \otimes f$ for all $(e,f)$ in this basis.  Moreover, $\psi$ is also a $kU$-$kW$-module map.

One can verify that $\psi \circ \widetilde{\phi} = \operatorname{id}_{{kE^1} \otimes_{kV} {kF^1}}$.  (It suffices to check this equality on the generators $e \otimes f$ of ${kE^1} \otimes_{kV} {kF^1}$, realizing that when $r_E(e) \neq s_F(f)$ we have $e \otimes f = e r_E(e) \otimes f = e \otimes r_E(e) f = e \otimes 0 = 0$.)  One can also verify that  $\widetilde{\phi} \circ \psi = \operatorname{id}_{k(E \times F)^1}$.  (Again, it suffices to check this equality on the generators $ \{ (e,f) \in E^1 \times F^1 : r_E(e) = s_F(f) \}$.)  Hence $\widetilde{\phi}$ and $\psi$ are $kU$--$kW$-bimodule isomorphisms, and ${kE^1} \otimes_{kV} {kF^1} \cong k(E \times F)^1$.
\end{proof}

We now prove the first of our two main results.  We note that an analogous result in the context of C$^*$-algebras has been established in \cite[Proposition 3.5]{CDE}.  

\begin{thm}[Graph Shift Equivalence if and only if Module Shift Equivalence]
\label{thm:shift-equivalence-module}
Let $k$ be a field, and let $E$ and $F$ be finite graphs with no sinks.  Let ${kE^1}$ (respectively, ${kF^1}$) denote the $kE^0$--$kE^0$ (respectively, $kF^0$--$kF^0$) bimodule described in Definition~\ref{graph-bimodule-def}.   Then the adjacency matrices of $E$ and $F$ are shift equivalent if and only if the following condition holds:
\begin{itemize}
\item[${\rm (MSE)}$]  There exists a  $kE^0$--$kF^0$-bimodule $M$, a $kF^0$--$kE^0$-bimodule $N$, and a positive integer $n$ for which 
\begin{align*}
{(kE^1)}^{\otimes n} &\cong M \otimes_{kF^0} N & {(kF^1)}^{\otimes n} &\cong N \otimes_{kE^0} M &  \\
{kE^1} \otimes_{kE^0} M &\cong M \otimes_{kF^0} {kF^1} & {kF^1} \otimes_{kF^0} N &\cong N \otimes_{kE^0} {kE^1},
\end{align*}
where the isomorphisms are bimodule isomorphisms.
\end{itemize}
\end{thm}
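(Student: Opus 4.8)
The plan is to prove the equivalence by translating both sides into the language of polymorphisms and their adjacency matrices, using the dictionary assembled in Propositions~\ref{bimodule-include-prop} and~\ref{bimodule-functor-prop} together with Definition~\ref{adjacency-matrix-def}. Write $A := A_E$ and $B := A_F$. I will repeatedly invoke three facts: every $kV$--$kW$-bimodule is isomorphic to a polymorphism bimodule $kP^1$ for an essentially unique polymorphism $P$ (Proposition~\ref{bimodule-include-prop}); the tensor product of polymorphism bimodules realizes the product of polymorphisms, so that $kP^1 \otimes_{kV} kQ^1 \cong k(P \times Q)^1$ with $A_{P \times Q} = A_P A_Q$ (Proposition~\ref{bimodule-functor-prop}); and two polymorphisms from $V$ to $W$ have isomorphic bimodules if and only if their adjacency matrices coincide (Definition~\ref{adjacency-matrix-def}). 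Since $E^0$ and $F^0$ are finite, $kE^0$ and $kF^0$ are unital and all the tensor products are the usual ones.

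For the implication shift equivalence $\Rightarrow {\rm (MSE)}$, suppose there are $n$ and nonnegative integer matrices $R$, $S$ with $A^n = RS$, $B^n = SR$, $AR = RB$, and $BS = SA$. By Definition~\ref{adjacency-matrix-def} I realize $R$ and $S$ as the adjacency matrices of polymorphisms $P$ (from $E^0$ to $F^0$) and $Q$ (from $F^0$ to $E^0$), and set $M := kP^1$ and $N := kQ^1$. Each of the four required bimodule isomorphisms then follows from the corresponding matrix identity: for instance $(kE^1)^{\otimes n} \cong k(E^{\times n})^1$ and $M \otimes_{kF^0} N \cong k(P \times Q)^1$ have adjacency matrices $A^n$ and $RS$, so $A^n = RS$ yields the isomorphism via the third fact. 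The remaining three are handled identically, establishing~${\rm (MSE)}$.

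For the converse, suppose ${\rm (MSE)}$ holds with bimodules $M$, $N$ and integer $n$. Write $M \cong kP^1$ and $N \cong kQ^1$ as in Proposition~\ref{bimodule-include-prop}, and put $R := A_P$ and $S := A_Q$. The heart of the argument is to show that $R$ and $S$ have finite (hence nonnegative integer) entries, and this is precisely where the hypothesis that $E$ and $F$ have no sinks is used. Indeed, if some entry $R(v_0, w_0) = \dim_k(v_0 M w_0)$ were infinite, then finiteness of the components of $M \otimes_{kF^0} N \cong (kE^1)^{\otimes n}$ would force $S(w_0, v') = 0$ for every $v'$, i.e.\ a zero row in $S$; reading the identity $B^n = SR$ off from $N \otimes_{kE^0} M \cong (kF^1)^{\otimes n}$ would then produce a zero row in $B^n = A_F^n$, contradicting that $F$ has no sinks. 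The symmetric argument, using that $E$ has no sinks, rules out infinite entries of $S$. With $R$ and $S$ now genuine nonnegative integer matrices, translating the four isomorphisms of~${\rm (MSE)}$ through the dictionary gives $A^n = RS$, $B^n = SR$, $AR = RB$, and $BS = SA$, so $A$ and $B$ are shift equivalent.

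The only genuinely nontrivial step is this finiteness argument in the converse; the rest is bookkeeping. The two points requiring care are keeping the order of tensor factors aligned with the order of matrix factors (so that $kE^1 \otimes_{kE^0} M$ corresponds to $AR$ while $M \otimes_{kF^0} kF^1$ corresponds to $RB$, and similarly for the last pair), and remembering that in passing from bimodule isomorphisms to equalities of adjacency matrices one may freely suppress the associativity isomorphisms of Notation~\ref{associativity-iso-notation}, since the relevant statement concerns only the existence of some isomorphism and not a prescribed one.
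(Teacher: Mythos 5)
Your proof is correct and follows essentially the same route as the paper's: both directions are handled by translating between bimodules, polymorphisms, and adjacency matrices via Propositions~\ref{bimodule-include-prop} and~\ref{bimodule-functor-prop} and the dictionary of Definition~\ref{adjacency-matrix-def}. The one place you go beyond the paper is worth noting: in the converse direction the paper simply asserts that Proposition~\ref{bimodule-include-prop} yields \emph{finite} polymorphisms $G$ and $H$, even though Condition~${\rm (MSE)}$ places no finiteness hypothesis on $M$ and $N$ and the proposition alone does not guarantee a finite edge set. Your argument --- that an infinite-dimensional component $v_0Mw_0$ would force the $w_0$-row of $N$ to vanish and hence produce a zero row in $N\otimes_{kE^0}M\cong (kF^1)^{\otimes n}$, contradicting that $F$ has no sinks --- correctly fills this in and identifies exactly where the no-sinks hypothesis enters this theorem.
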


\begin{proof}  
${\rm (SE)} \implies {\rm (MSE)}$.  Let $E$ and $F$ be graphs with adjacency matrices $A$ and $B$, respectively.  Then $A$ and $B$ are shift equivalent by hypothesis, so there exist matrices $R$ and $S$ and a positive integer $n$ such that
$$A^n = RS , \ \ B^n = SR, \ \ A R = R B, \ \  \mbox{and} \ \ \ B S = S A.$$
As described in Definition~\ref{adjacency-matrix-def}, there is a finite polymorphism $G$ from $E^0$ to $F^0$ with adjacency matrix $R$, and there is a finite polymorphism $H$ from $F^0$ to $E^0$ with adjacency matrix $S$.  
Also as described in Definition~\ref{adjacency-matrix-def}, these matrix equations are equivalent to the existence of the following isomorphisms of polymorphisms:
$$E^{\times n} \cong G \times H , \ \ F^{\times n} \cong H \times G, \ \ E \times G \cong G \times F, \ \  \mbox{and} \ \ F \times H \cong H \times E.$$
Forming the associated polymorphism bimodules yields bimodule isomorphisms
\begin{align*}
 k (E^{\times n})^1 & \cong k(G \times H)^1 &  k(F^{\times n})^1 &\cong k(H \times G)^1\\
 k(E \times G)^1 &\cong k(G \times F)^1 & k(F \times H)^1 &\cong k(H \times E)^1, 
\end{align*}
and Proposition~\ref{bimodule-functor-prop} gives bimodule isomorphisms
\begin{align*}
(kE^1)^{\otimes n} &\cong kG^1 \otimes kH^1 & (kF^1)^{\otimes n} &\cong kH^1 \otimes kG^1 \\
kE^1 \otimes kG^1 &\cong kG^1 \otimes kF^1 & kF^1 \otimes kH^1 &\cong kH^1 \otimes kE^1.
\end{align*}
Thus ${\rm (MSE)}$ follows by setting $M := kG^1$ and $N := kH^1$.

${\rm (MSE)} \implies {\rm (SE)}$.  By Proposition~\ref{bimodule-include-prop} there exists a finite polymorphism $G$ from $E^0$ to $F^0$ with $kG^1 \cong M$, and there exists a finite polymorphism $H$ from $F^0$ to $E^0$ with $kH^1 \cong N$.  Thus ${\rm (MSE)}$ gives   
\begin{align*}
(kE^1)^{\otimes n} &\cong kG^1 \otimes kH^1 & (kF^1)^{\otimes n} &\cong kH^1 \otimes kG^1 \\
kE^1 \otimes kG^1 &\cong kG^1 \otimes kF^1 & kF^1 \otimes kH^1 &\cong kH^1 \otimes kE^1, 
\end{align*}
and Proposition~\ref{bimodule-functor-prop} implies 
\begin{align*}
 k (E^{\times n})^1 & \cong k(G \times H)^1 &  k(F^{\times n})^1 &\cong k(H \times G)^1 \\
 k(E \times G)^1 &\cong k(G \times F)^1 & k(F \times H)^1 &\cong k(H \times E)^1.
\end{align*}
By the uniqueness assertion established in  Proposition~\ref{bimodule-include-prop}, we obtain isomorphisms of polymorphisms  
$$E^{\times n} \cong G \times H , \ \ F^{\times n} \cong H \times G, \ \ E \times G \cong G \times F, \ \  \mbox{and} \ \ F \times H \cong H \times E.$$
Taking the adjacency matrices of these polymorphisms and using the properties described in Definition~\ref{adjacency-matrix-def}, we obtain
$$A_E^n = A_G A_H , \ \ A_F^n = A_H A_G, \ \ A_E A_G = A_G A_F, \ \  \mbox{and} \ \ \ A_F A_H = A_H A_E.$$
Thus the adjacency matrices $A_E$ and $A_F$ are shift equivalent.
\end{proof}

\begin{rmk}\label{BiModremark}
As described in Remark~\ref{polymorphism-category-rem}, there is a category $\mathsf{Poly}$ whose objects are finite sets and whose morphisms are isomorphism classes of polymorphisms, using the product of polymorphisms for composition.  In addition, one may consider the category $\mathsf{BiMod}$ whose objects are (unital) rings, and for rings $R$ and $S$ the morphisms from $R$ to $S$ consist of isomorphism classes of $R$--$S$-bimodules with composition given by tensoring; i.e., if $M$ is an $R$--$S$-bimodule and $N$ is an $S$--$T$-bimodule, then $[N] \circ [M] := [ M \otimes_S N ]$.

Forming the bimodule of a polymorphism may be viewed as a functor from $\mathsf{Poly}$ to $\mathsf{BiMod}$.  More precisely, on objects the functor assigns $V \mapsto kV$ (where we view $kV$ as a ring), and on morphisms the functor assigns $[E] \mapsto [kE^1]$.  Proposition~\ref{bimodule-functor-prop} is exactly the statement that this assignment is functorial on morphisms.  Proposition~\ref{bimodule-include-prop} implies that this functor is faithful and that it maps onto the full subcategory of $\mathsf{BiMod}$ whose objects are rings that are (isomorphic to) finite direct sums of the field $k$, and whose morphisms are bimodules between these rings.
\end{rmk}

\begin{rmk} \label{data-first-appears-rem}
Theorem~\ref{thm:shift-equivalence-module} allows us to replace Condition~{\rm (SE)} in Hazrat's Conjecture with Condition~${\rm (MSE)}$.  This is useful for creating variants of Hazrat's Conjecture.  Condition~{\rm (SE)} asserts that certain matrix products are equal, while Condition~${\rm (MSE)}$ requires certain bimodules to be isomorphic.  In particular, Condition~${\rm (MSE)}$ allows us to add hypotheses on the bimodule isomorphisms, so we can explore stronger hypotheses that would require the bimodules occurring to be ``isomorphic in certain ways".  This is precisely the approach we shall follow to achieve Theorem~\ref{thm:main2}, in which we show that Condition~${\rm (MSE)}$ together with the additional hypothesis that the isomorphisms make certain diagrams commute imply that the Leavitt path algebras of the graphs are graded Morita equivalent.  To accomplish this, we will build an algebraic analog of the observation  that when $k = \mathbb{C}$, each of the isomorphisms ${kE^1} \otimes_{kE^0} M \cong M \otimes_{kF^0} {kF^1}$ and ${kF^1} \otimes_{kF^0} N \cong N \otimes_{kE^0} {kE^1}$ is precisely the data used by Ery\"uzl\"u in \cite{Eryuzlu} to create a $C^*$-correspondence in the $C^*$-algebra setting.   A similar conclusion, using different techniques, was also achieved in \cite{MeyerSehnem}.  
\end{rmk}

\section{The Bridging Bimodule} \label{sec-const-module}

In this section we use the constructions described previously to define the key tool in our investigation, the {\it bridging bimodule}.  

\begin{dfn} \label{conjugacy-def}
Let $E$ and $F$ be finite graphs with no sinks.  A $kE^0$--$kF^0$-bimodule $M$ is called an \emph{$E$--$F$-conjugacy} (or simply a \emph{conjugacy} if $E$ and $F$ are understood) in case  $$kE^1 \otimes_{kE^0} M \cong M \otimes_{kF^0} kF^1$$
as $kE^0$--$kF^0$-bimodules.
\end{dfn}

Observe that Condition~${\rm (MSE)}$ from Theorem~\ref{thm:shift-equivalence-module} presumes  that there is both a conjugacy from $E$ to $F$ as well as a conjugacy from $F$ to $E$.  

\begin{rmk}
Let $\mathsf{BiMod}$ denote the category whose objects are rings and whose morphisms are isomorphism classes of bimodules, with composition given by  tensor product.  In Definition~\ref{conjugacy-def} a conjugacy bimodule $M$ can be considered as a morphism in $\mathsf{BiMod}$ that makes the following diagram commute:
$$
\xymatrix{
kE^0 \ar[r]^{kE^1} \ar[d]_M & kE^0 \ar[d]^M \\
kF^0 \ar[r]^{kF^1} & kF^0
}
$$

\noindent In the subject of dynamics such a morphism is sometimes called a ``conjugacy".  Similarly, in a group we often say elements $a$ and $b$ are ``conjugate" if there exists an element $g$ such that $a = gbg^{-1}$.  This relation is generalized to semigroups by calling elements $a$ and $b$ ``conjugate" when there exists a (not necessarily invertible) element $g$ such that $ag = gb$.   This last equation is analogous to our relation $kE^1 \otimes_{kE^0} M \cong M \otimes_{kF^0} kF^1$.  These observations provide  motivation for our use of the term ``conjugacy" in Definition~\ref{conjugacy-def}.
\end{rmk}

For our construction it will be important to not only have a conjugacy $M$ but to also choose a particular isomorphism $\sigma: kE^1 \otimes_{kE^0} M \to M \otimes_{kF^0} kF^1$.

\begin{dfn}
Let $E$ and $F$ be finite graphs with no sinks.  If $M$ is an $E-F$ conjugacy, so that $kE^1 \otimes_{kE^0} M \cong M \otimes_{kF^0} kF^1$, an \emph{isomorphism implementing the conjugacy $M$} is a bimodule isomorphism
$$ \sigma : kE^1 \otimes_{kE^0} M \to M \otimes_{kF^0} kF^1.$$
We call the pair $(M, \sigma)$ a \emph{specified conjugacy pair} (or \emph{specified conjugacy} for short) \emph{from} $E$  \emph{to} $F$.
\end{dfn}

\begin{rmk} \label{left-action-rem}
Given a specified conjugacy pair $(M, \sigma)$ from $E$ to $F$, we shall construct an $L_k(E)$--$L_k(F)$-bimodule $Y_{(M,\sigma)}$.  We do this by defining $Y_{(M,\sigma)}$ to be $M \otimes_{kF^0} L_k(F)$.  The tensor product $M \otimes_{kF^0} L_k(F)$ naturally has a right $L_k(F)$-action, as well as a natural left $kE^0$-action.  But it shall take some effort for us to show that there is in fact a  left $L_k(E)$-action on $M \otimes_{kF^0} L_k(F)$ that makes this tensor product into an $L_k(E)$--$L_k(F)$-bimodule.  Showing that this left $L_k(E)$-action exists is the goal of this section, and after some preliminary results we accomplish this in Theorem~\ref{thm:bimodule-action}.
\end{rmk}

\begin{exm}\label{identityconjpair}
If $E$ is a finite graph with no sinks, we define $\operatorname{id}_E : E \to E$ by setting 
$$ \operatorname{id}_E := (kE^0, \epsilon_E)$$
where $\epsilon_E : kE^1 \otimes_{kE^0} kE^0 \to kE^0 \otimes_{kE^0} kE^1$ is the bimodule isomorphism with
$$ \epsilon_E (e \otimes v) := s(e) \otimes ev.$$
(Note that $e \otimes v$ is zero unless $v = r(e)$, so $\epsilon_E$ is taking $e \otimes r(e)$ to $s(e) \otimes e$.) 
Specifically,  $\operatorname{id}_E = (kE^0, \epsilon_E)$ is a specified conjugacy pair from $E$ to $E$.    
\end{exm}

\begin{exm}
Consider the graphs $E $ and $F$ given by 
$$\xymatrix{
 \bullet^{v} \ar@(ld, lu)[]^{e_1}  \ar@(rd, ru)[]_{e_2} 
}
\qquad \text{and} \qquad 
\xymatrix{
\bullet^{x} \ar@(ld, lu)[]^{f_1}   \ar@/^/[r]^{g_1} & \bullet^{y}  \ar@(rd, ru)[]_{f_2}  \ar@/^/[l]^{g_2}}$$
respectively. Note that $A_E = RS = (2)$ and $A_F = SR  = \left(\begin{smallmatrix} 1 & 1 \\ 1 & 1 \end{smallmatrix}\right)$, where $R = \begin{pmatrix} 1 & 1 \end{pmatrix}$ and $S = \left(\begin{smallmatrix}1 \\ 1 \end{smallmatrix}\right)$.  Let $G$ be the polymorphism $(\{v\}, \{x, y\} , \{e_{v,x}, e_{v, y} \}, r, s)$ defined by $R$ and let $H$ be the polymorphism $( \{x, y\}, \{v\} , \{ f_{x, v}, f_{y, v}\}, r, s)$ defined by $S$.  A computation shows that there are bimodule isomorphisms $\psi_E \colon kG^1 \otimes_{kF^0} kH^1 \to kE^1$ and $\psi_F : kH^1\otimes_{kE^0} k G^1 \to kF^1$ such that 
$$\psi_E ( e_{v,x} \otimes f_{x, v} ) =  e_1 \quad \text{and} \quad \psi_E ( e_{v,y} \otimes f_{y, v} ) =  e_2$$
and
\begin{align*}
\psi_F( f_{x, v} \otimes e_{v,x} ) &= f_1  & \psi_F( f_{y, v} \otimes e_{v,y} ) &= f_2 \\
\psi_F( f_{x, v} \otimes e_{v,y} ) &= g_1  & \psi_F( f_{y, v} \otimes e_{v,x} ) &= g_2.
\end{align*}
Let $\sigma_R \colon k E^1 \otimes_{kE^0} kG^1 \to kG^1 \otimes_{kF^0} kF^1$ denote the composition of isomorphisms
\begin{align*}
kE^1 \otimes_{kE^0} kG^1 \cong_{ \psi_E^{-1} \otimes \mathrm{id} } & ( k G^1  \otimes_{kF^0 } kH^1 ) \otimes_{ kE^0 } k G^1 \\
&\cong_{\alpha_{kG^1, kH^1, kG^1} }  k G^1 \otimes_{kF^0 } ( kH^1  \otimes_{ kE^0 } k G^1 ) \cong_{\mathrm{id} \otimes \psi_F } kG^1 \otimes_{kF^0 } k F^1
\end{align*}
and let $\sigma_S \colon k F^1 \otimes_{ kF^0 } kH^1 \to kH^1 \otimes_{kE^0 } kE^1$ denote the composition of isomorphisms
\begin{align*}
kF^1 \otimes_{kF^0} kH^1 \cong_{ \psi_F^{-1} \otimes \mathrm{id} } & ( k H^1 \otimes_{kG^0 } kG^1 ) \otimes_{ kF^0 } k H^1 \\
&\cong_{\alpha_{kH^1, kG^1, kH^1 }}  k H^1 \otimes_{kE^0 } ( kG^1  \otimes_{ kF^0 } k H^1 ) \cong_{\mathrm{id} \otimes \psi_E } kH^1 \otimes_{kE^0 } k E^1.
\end{align*}
Thus  $( kG^1, \sigma_R)$ is a specified conjugacy pair from $E$ to $F$,  and $(kH^1, \sigma_S)$ is a specified conjugacy pair from $F$ to $E$.

\end{exm}

The hypothesis  that  a graph has no sinks is used often throughout the article, the primary reason being that it affords  applications of the following result.  

\begin{lem}\label{nosinkslemma}
Let $E$ be a finite graph with no sinks.   
\begin{enumerate}
\item   The identity element $1_{L_k(E)}$ of $L_k(E)$ can be written as 
$$1_{L_k(E)} = \sum_{e\in E^1} ee^*.$$

\item For each $v\in E^0$ and each  positive integer $m$, 
$$ 
v =  \sum_{ \lambda_1 \cdots \lambda_m \in  v E^m } \lambda_1 \cdots \lambda_m (\lambda_1\cdots \lambda_m)^* .
$$

\end{enumerate}

\end{lem}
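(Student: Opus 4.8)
The plan is to deduce part (1) directly from the (CK2) relations, and then obtain part (2) by induction on $m$, with the base case supplied by the same relation applied vertex-by-vertex.

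For part (1), I would begin from the observation recorded earlier in the excerpt that, since $E^0$ is finite, $L_k(E)$ is unital with $1_{L_k(E)} = \sum_{v\in E^0} v$. The role of the no-sinks hypothesis is to guarantee that every $v\in E^0$ satisfies $|s^{-1}(v)| > 0$, while finiteness of $E$ guarantees $|s^{-1}(v)| < \infty$; hence the relation (CK2) applies to \emph{every} vertex, giving $v = \sum_{e\in vE^1} ee^*$. Summing over $v\in E^0$ and using that the sets $vE^1 = s^{-1}(v)$ partition $E^1$ (each edge has exactly one source) yields $1_{L_k(E)} = \sum_{v\in E^0}\sum_{e\in vE^1} ee^* = \sum_{e\in E^1} ee^*$. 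This step is entirely routine.

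For part (2), I would argue by induction on $m$. The case $m=1$ is precisely the statement $v = \sum_{\lambda_1\in vE^1}\lambda_1\lambda_1^*$, which is again (CK2) applied to the non-sink vertex $v$. For the inductive step, assume $v = \sum_{\mu\in vE^m}\mu\mu^*$, where each $\mu = \lambda_1\cdots\lambda_m$. The key computation is to expand each summand: writing $\mu = \mu\,r(\mu)$ and $\mu^* = r(\mu)\mu^*$ via the extended (E1) and (E2) relations gives $\mu\mu^* = \mu\,r(\mu)\,\mu^*$; inserting the (CK2) expansion $r(\mu) = \sum_{e\in r(\mu)E^1} ee^*$ (valid since $r(\mu)$ is not a sink) and using $(\mu e)^* = e^*\mu^*$ yields $\mu\mu^* = \sum_{e\in r(\mu)E^1}(\mu e)(\mu e)^*$.

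The one point requiring care, and the closest thing to an obstacle, is the bookkeeping that identifies the resulting double sum with a single sum over $vE^{m+1}$: as $\mu$ ranges over $vE^m$ and $e$ ranges over $r(\mu)E^1$, the concatenation $\mu e = \lambda_1\cdots\lambda_m e$ ranges \emph{bijectively} over all paths in $vE^{m+1}$, since every path of length $m+1$ emanating from $v$ decomposes uniquely as an initial length-$m$ path from $v$ followed by a composable edge. With this reindexing in hand, $v = \sum_{\mu\in vE^m}\sum_{e\in r(\mu)E^1}(\mu e)(\mu e)^* = \sum_{\nu\in vE^{m+1}}\nu\nu^*$, which completes the induction. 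Everything here reduces to the defining relations of $L_k(E)$ together with this elementary decomposition of paths, so no serious difficulty arises.
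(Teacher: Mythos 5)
Your proof is correct and follows essentially the same route as the paper's: part (1) by summing the (CK2) relation over all vertices, and part (2) by repeatedly inserting the (CK2) expansion of $r(\mu)$, which the paper phrases as "replace $r(e)$ with the appropriate (CK2) relation as many times as required" and you formalize as an induction on $m$ with the path-reindexing made explicit.
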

\begin{proof}
Since $  E$ is finite with no sinks we have $v = \sum_{s(e) = v} ee^*$ for all $v \in E^0$.  For (1), we have $1_{L_k(E)} = \sum_{v \in E^0}v =  \sum_{v \in E^0} (\sum_{s(e) = v} ee^* )= \sum_{e \in E^1} ee^*$.   For (2), since $E$ has no sinks, the (CK2)  relation gives $v = \sum_{e \in vE^1} ee^*$ for all $v \in E^0$.  For $m \geq 1$, we use the fact that $ee^* = e r(e) e^*$ for each $e\in E^1$, and then replace $r(e)$ with the appropriate (CK2)  relation as many times as required.  
\end{proof}

\begin{prp}\label{lem:iso-YotimesOY}
Let $E$ be a finite graph with no sinks.  Let $m$ be any positive integer.  Then there exists a $kE^0$--$L_k(E)$-bimodule isomorphism
\[
\rho_{E,m} : \  (kE^1)^{\otimes m} \otimes_{ kE^0 } L_k(E)  \ \to \ L_k(E)
\]
such that $\rho_{E,m}((x_1 \otimes \cdots \otimes x_m) \otimes S)=x_1x_2\cdots x_m S$ for all $x_1, \ldots, x_m \in kE^1$ and for all $S \in L_k(E)$.  
\end{prp}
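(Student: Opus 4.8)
The plan is to define $\rho_{E,m}$ through the universal property of the tensor product, check that it is a bimodule map, and then prove bijectivity by exhibiting an explicit two-sided inverse built from the no-sinks identity of Lemma~\ref{nosinkslemma}.

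First I would construct $\rho_{E,m}$. Using the embedding $kE^1 \hookrightarrow L_k(E)$, the assignment $(x_1, \dots, x_m, S) \mapsto x_1 x_2 \cdots x_m S$ is a $k$-multilinear map $(kE^1)^{\times m} \times L_k(E) \to L_k(E)$. Because multiplication in $L_k(E)$ is associative and the inclusions of $kE^0$ and $kE^1$ into $L_k(E)$ are compatible, for $a \in kE^0$ one has $(x_i a) x_{i+1} = x_i (a x_{i+1})$ and $(x_m a)S = x_m(aS)$; hence the map is $kE^0$-balanced across each adjacent pair and factors through $(kE^1)^{\otimes m} \otimes_{kE^0} L_k(E)$, producing the desired $k$-linear map $\rho_{E,m}$ with the stated formula. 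That $\rho_{E,m}$ respects the left $kE^0$-action and the right $L_k(E)$-action is then immediate from the formula, since $a\cdot((x_1\otimes\cdots\otimes x_m)\otimes S)\mapsto (ax_1)x_2\cdots x_m S = a(x_1\cdots x_m S)$ and the right $L_k(E)$-action simply multiplies $S$ on the right.

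The substantive step is bijectivity, which I would establish by producing an inverse. Define $\theta : L_k(E) \to (kE^1)^{\otimes m} \otimes_{kE^0} L_k(E)$ by
\[
\theta(T) = \sum_{\lambda = \lambda_1 \cdots \lambda_m \in E^m} (\lambda_1 \otimes \cdots \otimes \lambda_m) \otimes \lambda^* T,
\]
a finite sum since $E$ is finite. The identity $\rho_{E,m} \circ \theta = \operatorname{id}$ follows from Lemma~\ref{nosinkslemma}: one computes $\rho_{E,m}(\theta(T)) = \sum_{\lambda} \lambda_1 \cdots \lambda_m \lambda^* T = \sum_{\lambda \in E^m} \lambda \lambda^* T = T$, using that $\sum_{\lambda \in E^m} \lambda \lambda^* = 1_{L_k(E)}$ (obtained by summing part (2) of Lemma~\ref{nosinkslemma} over all vertices). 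For $\theta \circ \rho_{E,m} = \operatorname{id}$ it suffices to check on generators $(e_1 \otimes \cdots \otimes e_m) \otimes S$ with $e_i \in E^1$. If $e_1 \cdots e_m$ is not a path, then $e_1 \otimes \cdots \otimes e_m = 0$ already in $(kE^1)^{\otimes m}$ (since $r(e_i)e_{i+1} = 0$ when $r(e_i) \neq s(e_{i+1})$), so both sides vanish; if $\mu := e_1 \cdots e_m$ is a path, then using the Cuntz--Krieger relation $\lambda^* \mu = \delta_{\lambda,\mu} r(\mu)$ for paths of common length $m$ one gets $\theta(\rho_{E,m}((e_1 \otimes \cdots \otimes e_m) \otimes S)) = \theta(\mu S) = (e_1 \otimes \cdots \otimes e_m) \otimes r(e_m) S = (e_1 \otimes \cdots \otimes e_m) \otimes S$, where the final equality holds because $e_m r(e_m) = e_m$ lets one absorb $r(e_m)$ back across the last tensor symbol.

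The main obstacle is injectivity---equivalently, verifying that $\theta$ is a genuine inverse---and this is precisely where the no-sinks hypothesis is essential: without it, $\sum_{\lambda \in E^m} \lambda \lambda^*$ need not equal $1_{L_k(E)}$, so one cannot reconstruct $T$ from the components $\lambda^* T$. Everything else (the balancing, well-definedness, and the bimodule-homomorphism property) is routine bookkeeping. I note that one could alternatively identify $(kE^1)^{\otimes m}$ with $k(E^{\times m})^1$ via Proposition~\ref{bimodule-functor-prop}, rephrasing $\rho_{E,m}$ as $\mu \otimes S \mapsto \mu S$ over length-$m$ paths $\mu$; this streamlines the notation but follows the identical logic.
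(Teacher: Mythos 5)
Your proof is correct, but it packages the bijectivity argument differently from the paper. The paper splits the work: it proves surjectivity directly by using Lemma~\ref{nosinkslemma}(2) to exhibit a preimage of each $\mu\nu^*$, and it proves injectivity by first factoring $\rho_{E,m}$ as $\rho_{E,m-1} \circ (\mathrm{id} \otimes \rho_{E,1}) \circ T_m$ (so that an induction reduces everything to $m=1$) and then showing $\ker \rho_{E,1} = 0$ via the computation $z = \sum_i u_i \otimes W_i = \sum_{e} e \otimes \big(\sum_i (e^*u_i)W_i\big) = 0$. You instead build a single explicit two-sided inverse $\theta(T) = \sum_{\lambda \in E^m} (\lambda_1 \otimes \cdots \otimes \lambda_m) \otimes \lambda^* T$ for general $m$ and verify both composites, which avoids the induction and the separate surjectivity step entirely. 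The underlying engine is the same in both proofs --- the no-sinks resolution of the identity $\sum_{\lambda \in E^m} \lambda\lambda^* = 1_{L_k(E)}$ --- and indeed the paper's injectivity computation for $\rho_{E,1}$ is secretly the $m=1$ case of your identity $\theta \circ \rho_{E,m} = \mathrm{id}$. Your route is arguably cleaner and more self-contained; the paper's route has the minor advantage that the maps $T_m$ and the factorization through $\rho_{E,1}$ are reused elsewhere in its bimodule bookkeeping. All the points you flag as routine (balancedness of the multilinear map, checking identities on the spanning set of simple tensors with edge entries, the vanishing of $e_1 \otimes \cdots \otimes e_m$ when $e_1\cdots e_m$ is not a path, and absorbing $r(e_m)$ across the tensor symbol) do check out, and since a bijective bimodule homomorphism has a bimodule-homomorphism inverse, you do not need to verify separately that $\theta$ respects the actions.
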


\begin{proof}
Define $\rho_{E,m}$ by setting 
$$\rho_{E,m} \left( \sum_{ i = 1}^N ( x_{1, i} \otimes \cdots \otimes x_{m,i} ) \otimes S_i  \right) = \sum_{ i = 1}^N x_{1,i} \cdots x_{m,i} S_i.$$
  It is straightforward to check that $\rho_{E,m}$ is a well-defined $k$-linear map.  In addition, for any $a \in kE^0$ we have 
\begin{align*}
    \rho_{E,m}( a ( ( x_1 \otimes \cdots \otimes x_m) \otimes S) ) &= \rho_{E,m}( ((ax_1)\otimes x_2 \otimes \cdots \otimes x_m) \otimes S ) = (ax_1) x_2 \cdots x_m S \\
    &= a( x_1 \cdots x_m S) = a\rho_{E,m}( (x_1 \otimes \cdots \otimes x_m) \otimes S),   
    \end{align*}
    and for any $T \in L_k(E)$ we have
    \noindent
    \begin{align*}
    \rho_{E,m}( ( x_1 \otimes \cdots \otimes x_m) \otimes S ) T) &= \rho_{E,m} ( (x_1 \otimes \cdots \otimes x_m) \otimes (ST) ) = x_1 \cdots x_m (ST) \\
                &= (x_1 \cdots x_m S)T =\rho_{E,m}( (x_1 \otimes \cdots \otimes x_m) \otimes S)T.
\end{align*}
Thus $\rho_{E,m}$ is a $kE^0$--$L_k(E)$-bimodule map.

Let $\mu$ and $\nu$ be  paths in $E$.  Since $E$ is a finite graph with no sinks, Lemma~\ref{nosinkslemma}(2)  implies
\[
s(\mu)=  \sum_{ \lambda_1 \cdots \lambda_m \in s(\mu) E^m } \lambda_1 \cdots \lambda_m (\lambda_1\cdots \lambda_m)^*\
\]
in $L_k(E)$.  We then have
\begin{align*}
\rho_{E,m} \Big(  \sum_{ \lambda_1 \cdots  \lambda_m \in s(\mu) E^m } ( \lambda_1 &\otimes \cdots \otimes \lambda_m ) \otimes (\lambda_1\cdots \lambda_m)^* \mu \nu^* \Big)\\
 &=  \sum_{ \lambda_1 \cdots \lambda_m \in s(\mu) E^m }  \lambda_1 \cdots \lambda_m (\lambda_1\cdots \lambda_m)^*\mu \nu^* \\
&= s( \mu) \mu \nu^* \\
&= \mu \nu^*.
\end{align*}
Since $L_k(E)$ is the linear span of elements of the form $\mu \nu^*$ for paths $\mu, \nu$ in $E$, we conclude $\rho_{E,m}$ is surjective.

We note the following for use in the displayed equations below:  if $x,y \in kE^1$, then the element $x^*y$ of $L_k(E)$ is in $kE^0$ (because $x$ and $y$ are $k$-linear combinations of edges, and $e_1^*e_2$ is either $0$ or a vertex for all $e_1, e_2 \in E^1$).  In particular,  
\begin{equation} \label{useful-below-eq}
e (e^*u) \otimes_{kE^0} W=  e \otimes_{kE^0} (e^*u) W
\end{equation} 
in  $kE^1 \otimes_{ kE^0 }  L_k(E)$ for all $e\in E^1$, $u \in kE^1$, and $W \in L_k(E)$.

To prove $\rho_{E,m}$ is injective,  we note that it is straightforward to check the usual ``associativity of tensors" bimodule isomorphism 
$$T_m:  \ \ (kE^1)^{\otimes m} \otimes_{ kE^0 } L_k(E) \ \to \ (kE^1)^{\otimes (m-1)} \otimes_{ kE^0 } (kE^1 \otimes_{ kE^0 }  L_k(E))$$
has the property that for each $m\geq 2$, 
$$\rho_{E,m-1} \circ (\mathrm{id}_{(kE^1)^{\otimes (m-1)}} \otimes \rho_{E,1} ) \circ T_m = \rho_{E,m}.$$  Thus once we prove that $\rho_{E,1}$ is injective, it follows from an inductive argument that $\rho_{E,m}$ is injective for all $m \geq 1$.

To this end, suppose $z = \sum_{ i = 1}^N u_i \otimes W_i \in kE^1 \otimes_{ kE^0 }  L_k(E)$ with $\rho_{E,1}(z)=0$.  Then $\sum_{i=1}^N u_i W_i = 0$, which implies that 
\begin{equation} \label{useful-below-soon-eq}
\sum_{ i=1}^N (x^*u_i)W_i =0 \qquad \text{for all $x \in kE^1$.}
\end{equation}
Note that $(\sum_{v \in E^0} v) u_i  = 1_{L_k(E)} u_i= u_i$ for all $i$.  By Lemma~\ref{nosinkslemma} we obtain 
\[
u_i = \left(\sum_{v \in E^0} v\right) u_i  = \sum_{ v \in E^0 } \sum_{ e \in s^{-1}(v)} e e^* u_i = \sum_{ e \in E^1} ee^* u_i
\]
for all $1 \leq i \leq N$.  Therefore, using \eqref{useful-below-eq} we conclude $e (e^*u_i) \otimes W_i  - e \otimes (e^*u_i) W_i = 0$ for all $1 \leq i \leq N$, and hence
\begin{align*}
0&=\sum_{ i = 1}^N \sum_{ e \in E^1} \left( e (e^*u_i) \otimes W_i  - e \otimes (e^*u_i) W_i \right) \\
    &= \sum_{ i = 1}^N \sum_{ e \in E^1 } ee^*u_i \otimes W_i - \sum_{ i = 1}^N \sum_{ e \in E^1} e \otimes (e^*u_i) W_i  \\
&= \sum_{ i =1}^N  u_i \otimes W_i - \sum_{ e \in E^1} \sum_{ i =1}^N e \otimes (e^*u_i)W_i \\
&= z - \sum_{ e \in E^1} e \otimes\left(\sum_{ i =1}^N  (e^*u_i)W_i \right) \\
&= z - \sum_{ e \in E^1} e \otimes 0 \qquad \qquad \text{(by \eqref{useful-below-soon-eq})} \\
&=z.
\end{align*}
Consequently, $\rho_{E,1}$ is injective, which suffices to yield that  $\rho_{E,m}$ injective for all $m\geq  1$.
\end{proof}

\begin{ntn} \label{rho-E-not}
We shall denote the isomorphism $\rho_{E,1}$ in Proposition~\ref{lem:iso-YotimesOY} by $\rho_E$.  In particular, $$\rho_E : kE^1 \otimes_{kE^0} L_k(E) \to L_k(E) \ \ \  \ \mbox{satisfies} \ \ \ \  \rho_E (x \otimes S) = xS$$
for all $x\in kE^1$ and all $S\in L_k(E)$.  
\end{ntn}

As a result of Proposition~\ref{lem:iso-YotimesOY}, for each positive integer $m$ there is   a  $kE^0$--$L_k(E)$-bimodule isomorphism $\rho_{E,m} :    (kE^1)^{\otimes m} \otimes_{ kE^0 } L_k(E)   \to  L_k(E)$.  In order to construct the left $L_K(E)$-action on $M \otimes_{kF^0} L_k(F)$ we shall only need to use $\rho_E := \rho_{E,1}$.  However, in a later section we shall need the maps $\rho_{E,m}$ for each positive integer $m$.  This is why we established Proposition~\ref{lem:iso-YotimesOY} for general $m$.

\begin{ntn}\label{thetaandmunotation}
Let $E$ be a finite graph and let $M$ be a $kE^0$--$kF^0$-bimodule.  For each $e \in E^1$ let $\theta_e \colon M \to kE^1 \otimes_{ kE^0 } M$ be the $kE^0$--$kF^0$-bimodule homomorphism with
$$\theta_e( m ) = e \otimes m$$
for all $m \in M$, and let $\theta_{e^*} \colon  kE^1 \otimes_{ kE^0 } M \to M$ be the $kE^0$--$kF^0$-bimodule homomorphism with
$$
\theta_{e^*}( x \otimes m) = (e^*x) m
$$
for all $m \in M$ and $x\in  kE^1$.

If $A$ and $B$ are rings and $N$ is an $A$--$B$-bimodule, then for each $a \in A$ we let $
\mu_a \colon N \to N$ denote the right $B$-endomorphism defined by left multiplication by $a$; that is, 
$$\mu_a(n) = an$$ 
for all $a \in A$ and $n \in N$.
\end{ntn}

We are now in  position to construct the left $L_k(E)$-action on $M \otimes_{kF^0} L_k(F)$ mentioned in Remark~\ref{left-action-rem}.

\begin{thm} \label{thm:bimodule-action}
Let $E$ and $F$ be finite graphs with no sinks.  Let $(M, \sigma)$ be a specified conjugacy pair from $E$ to $F$; that is, $M$ is a $kE^0$--$kF^0$-bimodule, and
$$
\sigma \colon \  kE^1 \otimes_{kE^0} M \ \to \ M \otimes_{kF^0} kF^1
$$
is a $kE^0$--$kF^0$-bimodule isomorphism.  The right $L_k(F)$-module $$M \otimes_{kF^0} L_k(F)$$  admits a left $L_k(E)$-action which makes $M \otimes_{kF^0} L_k(F)$ into an $L_k(E)$--$L_k(F)$-bimodule.  The 
left $L_k(E)$-action is determined by the following relations:
\begin{itemize}
\item[(1)] If $v \in E^0$, $m \in M$, and $S \in L_k(F)$, then $$v \cdot (m \otimes S) = vm \otimes S.$$
\item[(2)] If $e \in E^1$, $m \in M$, and $S \in L_k(F)$ with $\sigma (e \otimes m) = \sum_{i=1}^n m_i \otimes y_i$ for $m_1, \ldots, m_n \in M$ and $y_1, \ldots, y_n \in kF^1$, then
$$ e \cdot (m \otimes S) = \sum_{i=1}^n m_i \otimes y_iS.$$
\item[(3)] If $e \in E^1$, $m \in M$, and $S \in L_k(F)$, and if for each $f \in F^1$ we have $\sigma^{-1} (m \otimes f) = \sum_{i=1}^{n_f} y_i^f \otimes m_i^f$ for $y_1^f, \ldots, y_{n_f}^f \in kE^1$ and $m_1^f, \ldots, m_{n_f}^f \in M$, then
$$ e^* \cdot (m \otimes S) =\sum_{f \in F^1} \sum_{i=1}^{n_f} (e^* y_i^f) m_i^f \otimes f^* S.$$
(Note, in particular, that if $S=g \in F^1$ is an edge and $\sigma^{-1}(m \otimes g) = \sum_{i=1}^{n_g} y_i^g \otimes m_i^g$, then $e^* \cdot (m \otimes g) = \sum_{i=1}^{n_g} e^* y_i^g m_i^g \otimes r(g)$.)
\end{itemize}
Moreover, if we define
$$Y_n := \operatorname{span}_k\{  x \otimes S \ | \ x\in M \text{ and } S \in L_k(F)_n\},$$ 
then the $k$-subspaces $\{Y_n \ | \ n\in \mathbb{Z} \}$ provide a $\ZZ$-grading that makes $M \otimes_{kF^0} L_k(F)$ a $\ZZ$-graded $L_k(E)$--$L_k(F)$-bimodule.
\end{thm}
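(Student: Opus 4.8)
The plan is to build the left $L_k(E)$-action via the universal property of $L_k(E)$. Write $Y := M \otimes_{kF^0} L_k(F)$, regarded as a right $L_k(F)$-module in the evident way, and let $A := \operatorname{End}_{L_k(F)}(Y)$ be the unital $k$-algebra of right $L_k(F)$-module endomorphisms of $Y$, with multiplication given by composition. I would exhibit a Cuntz--Krieger $E$-family $\{P_v \mid v \in E^0\} \cup \{S_e, S_{e^*} \mid e \in E^1\}$ inside $A$; the universal property then produces a $k$-algebra homomorphism $\varphi \colon L_k(E) \to A$, and setting $a \cdot y := \varphi(a)(y)$ endows $Y$ with a left $L_k(E)$-action. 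Because every element of $A$ is by definition a right $L_k(F)$-module map, the left and right actions will automatically commute, so $Y$ becomes an $L_k(E)$--$L_k(F)$-bimodule with no further work.

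\textbf{The family.} Using Notations \ref{rho-E-not}, \ref{thetaandmunotation}, and \ref{associativity-iso-notation}, I would define for $v \in E^0$ and $e \in E^1$
\begin{align*}
P_v &:= \mu_v \otimes \operatorname{id}_{L_k(F)}, \\
S_e &:= (\operatorname{id}_M \otimes \rho_F) \circ \alpha_{M, kF^1, L_k(F)} \circ \big( (\sigma \circ \theta_e) \otimes \operatorname{id}_{L_k(F)} \big), \\
S_{e^*} &:= \big( (\theta_{e^*} \circ \sigma^{-1}) \otimes \operatorname{id}_{L_k(F)} \big) \circ \alpha_{M,kF^1,L_k(F)}^{-1} \circ (\operatorname{id}_M \otimes \rho_F^{-1}),
\end{align*}
where $\rho_F$ is the isomorphism of Proposition \ref{lem:iso-YotimesOY} (which exists precisely because $F$ has no sinks). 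Each constituent map is right $L_k(F)$-linear, so all three operators lie in $A$. Unwinding these compositions, and using that $\rho_F^{-1}(S) = \sum_{f \in F^1} f \otimes f^* S$ (valid since $1_{L_k(F)} = \sum_{f} ff^*$ by Lemma \ref{nosinkslemma}(1)), recovers exactly formulas (1), (2), and (3); in particular these formulas determine the action, since $E^0 \cup E^1 \cup \{e^* : e \in E^1\}$ generates $L_k(E)$.

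\textbf{Checking the relations.} Relations (V), (E1), and (E2) will follow directly from the corresponding relations in $kE^0$ together with the fact that $\sigma$ (and hence $\sigma^{-1}$) is a left $kE^0$-module map; for instance $P_{s(e)} \circ S_e = S_e$ because $e \otimes m = s(e)\cdot(e \otimes m)$, and the left $kE^0$-linearity of $\sigma$ then forces $\mu_{s(e)}$ to fix $\sigma(e \otimes m)$. The two genuinely interesting relations are (CK1) and (CK2), and here the compositional descriptions pay off. For (CK1), composing $S_{e^*} \circ S_{e'}$ causes the adjacent factors $\operatorname{id}_M \otimes \rho_F^{-1}$ and $\operatorname{id}_M \otimes \rho_F$ to cancel, as do $\alpha^{-1}$ and $\alpha$ and then $\sigma^{-1}$ and $\sigma$, leaving $\big( (\theta_{e^*} \circ \theta_{e'}) \otimes \operatorname{id} \big)$; since $\theta_{e^*}(\theta_{e'}(m)) = (e^* e') m = \delta_{e,e'}\, r(e)m$, this equals $\delta_{e,e'} P_{r(e)}$. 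For (CK2), the same cancellations reduce $\sum_{e \in vE^1} S_e \circ S_{e^*}$ to $(\operatorname{id}_M \otimes \rho_F) \circ \alpha \circ \big( (\sum_{e \in vE^1} \sigma \theta_e \theta_{e^*} \sigma^{-1}) \otimes \operatorname{id} \big) \circ \alpha^{-1} \circ (\operatorname{id}_M \otimes \rho_F^{-1})$; the identity $\sum_{e \in vE^1} \theta_e \circ \theta_{e^*} = \mu_v$ on $kE^1 \otimes_{kE^0} M$ (a direct computation, using that $v$ is not a sink so that $vE^1 \neq \emptyset$) together with the left $kE^0$-linearity of $\sigma$ collapses the middle term to $\mu_v \otimes \operatorname{id}_{kF^1}$, and the outer isomorphisms then recombine to give exactly $P_v$. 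Thus $\{P_v, S_e, S_{e^*}\}$ is a Cuntz--Krieger $E$-family.

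\textbf{The grading, and the main obstacle.} Finally, since tensoring commutes with direct sums and $kF^0 \subseteq L_k(F)_0$, the decomposition $L_k(F) = \bigoplus_n L_k(F)_n$ yields $Y = \bigoplus_{n} Y_n$ with $Y_n = M \otimes_{kF^0} L_k(F)_n$. The right action satisfies $Y_n \cdot L_k(F)_p \subseteq Y_{n+p}$ because $L_k(F)_n L_k(F)_p \subseteq L_k(F)_{n+p}$. For the left action it suffices to check the graded generators: $P_v$ preserves degree, formula (2) shows $S_e$ raises degree by one (as $y_i \in kF^1 \subseteq L_k(F)_1$), and formula (3) shows $S_{e^*}$ lowers degree by one (as $f^* \in L_k(F)_{-1}$), matching the $\mathbb{Z}$-grading of $L_k(E)$; hence $Y$ is a $\mathbb{Z}$-graded $L_k(E)$--$L_k(F)$-bimodule. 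I expect the main obstacle to be the correct formulation of $S_{e^*}$: specifically, recognizing that the ``expansion'' $\rho_F^{-1}(S) = \sum_f f \otimes f^* S$ furnished by the no-sinks hypothesis is exactly what allows $S_{e^*}$ to be defined as a genuine right $L_k(F)$-module endomorphism. Once that is in place, the verifications of (CK1) and (CK2) become a clean cancellation of isomorphisms rather than a direct manipulation of the sums in formulas (2) and (3).
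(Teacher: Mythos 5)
Your proposal is correct and follows essentially the same route as the paper: the identical operators $P_v$, $S_e$, $S_{e^*}$ in $\operatorname{End}_{L_k(F)}(M\otimes_{kF^0}L_k(F))$, the same verification that they form a Cuntz--Krieger $E$-family (with (CK1) and (CK2) reducing to cancellations around $\theta_{e^*}\theta_{e'}$ and $\sum_{e\in vE^1}\theta_e\theta_{e^*}=\mu_v$), the same appeal to the universal property, and the same grading argument. The only point worth adding is the paper's explicit observation that $\sum_{v\in E^0}P_v=\operatorname{id}$, which is what makes the induced left action unital.
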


\begin{proof}
Since the graphs have no sinks,  the map $\rho_F :  kF^1 \otimes_{kF^0} L_k(F) \to L_k(F)$ described in Notation~\ref{rho-E-not} and Proposition~\ref{lem:iso-YotimesOY} is a $kF^0$--$L_k(F)$-bimodule isomorphism.  For $v \in E^0$ and $e \in E^1$ we define the following elements of $\mathrm{End}_{L_k(F) } ( M \otimes_{kF^0} L_k(F) )$, using the associativity isomorphisms as described in Notation~\ref{associativity-iso-notation} and the maps $\theta$ and $\mu$ of Notation~\ref{thetaandmunotation}.
   \begin{eqnarray*}
      P_v & := & \mu_v \otimes \mathrm{id}_{L_k(F)} \\
   S_e & := &  (\mathrm{id}_M \otimes \rho_F) \circ  \alpha_{M,kF^1, L_k(F)}  \circ ( \sigma  \otimes \mathrm{id}_{L_k(F)}) \circ (\theta_e \otimes \mathrm{id}_{L_k(F)} ) \\
   & =  &(\mathrm{id}_M \otimes \rho_{F}) \circ  \alpha_{M,kF^1, L_k(F)} \circ ( \sigma \theta_e \otimes \mathrm{id}_{L_k(F)} ) \hspace{1in} \ \ \mbox{ and }    \\
    S_{e^*} & := &(\theta_{e^*} \otimes \mathrm{id}_{ L_k(F) } ) \circ (\sigma^{-1}  \otimes \mathrm{id}_{L_k(F)}) \circ  \alpha_{M,kF^1, L_k(F)}^{-1} \circ ( \mathrm{id}_M \otimes \rho_F^{-1}) \\
  &=&(\theta_{e^*} \sigma^{-1} \otimes \mathrm{id}_{ L_k(F) } ) \circ \alpha_{M,kF^1, L_k(F)}^{-1} \circ(\mathrm{id}_M \otimes \rho_F^{-1}). 
   \end{eqnarray*}
We shall show that the subset $\{ P_v, S_e, S_{e^*} : v \in E^0, e \in E^1 \}$ of $\mathrm{End}_{L_k(F) } ( M \otimes_{kF^0} L_k(F) )$ is a Cuntz-Krieger $E$-family.    Let $v, w \in E^0$ and $e \in E^1$.    There are five relations to verify.   (To ease notational burden we suppress the $\circ$ symbol when utilizing the composition of functions in these verifications.)  
  
  \medskip

\noindent \textsc{Relation~1:}  For $m \in M$ and $S \in L_k(F)$ we have
\begin{align*}
    P_vP_w (m \otimes S) &= (\mu_v \otimes \mathrm{id}_{L_k(F)} )  (\mu_w \otimes \mathrm{id}_{L_k(F)} ) (m \otimes S) = (\mu_v \otimes  \mathrm{id}_{L_k(F)} ) ( wm \otimes S) \\
    &= (vw m  \otimes S) =  \delta_{v,w} ( v m  \otimes S) = \delta_{v,w}  P_v ( m \otimes S), 
\end{align*}    
so that $P_v P_w = \delta_{v,w} P_v$.

\medskip

\noindent \textsc{Relation~2:}  For $m \in M$ and $S \in L_k(F)$ we have
\begin{align*}
    P_{s(e)} S_e ( m \otimes S ) 
    &= (\mu_{s(e)} \otimes \mathrm{id}_{L_k(F) }) (\mathrm{id}_M \otimes \rho_F) \alpha_{M,kF^1, L_k(F)}( \sigma \theta_e \otimes \mathrm{id}_{L_k(F)} ) ( m \otimes S) \\
    &= (\mu_{s(e)} \otimes \mathrm{id}_{L_k(F) }) (\mathrm{id}_M \otimes \rho_F ) \alpha_{M,kF^1, L_k(F)}( \sigma(  e \otimes m) \otimes S ) \\
    &= (\mathrm{id}_M \otimes \rho_F ) ( \mu_{s(e)} \otimes \mathrm{id}_{ kF^1 } \otimes \mathrm{id}_{ L_k(F)} ) \alpha_{M,kF^1, L_k(F)}( \sigma(  e \otimes m) \otimes S )\\
        &= (\mathrm{id}_M \otimes \rho_F ) ( s(e) \alpha_{M,kF^1, L_k(F)}( \sigma(  e \otimes m) \otimes S )) \\
    &= (\mathrm{id}_M \otimes \rho_F ) (  \alpha_{M,kF^1, L_k(F)} (\sigma( s(e)e \otimes m ) \otimes S)) \\
    &= (\mathrm{id}_M \otimes \rho_F )( \alpha_{M,kF^1, L_k(F)} ( \sigma( e \otimes m ) \otimes S) ) \\
    &= (\mathrm{id}_M \otimes \rho_F )( \alpha_{M,kF^1, L_k(F)} ( \sigma( \theta_e(m) ) \otimes S) ) \\
    &= S_{e}( m \otimes S ),
\end{align*}
and
\begin{align*}    
    S_eP_{r(e)}(m \otimes S) &= (\mathrm{id}_M \otimes \rho_F) \alpha_{M,kF^1, L_k(F)} ( \sigma \theta_e \otimes \mathrm{id}_{L_k(F)} )( r(e) m \otimes S ) \\
    &= (\mathrm{id}_M \otimes \rho_F) \alpha_{M,kF^1, L_k(F)}( \sigma( e \otimes r(e) m ) \otimes S ) \\
    &= (\mathrm{id}_M \otimes \rho_F) \alpha_{M,kF^1, L_k(F)} ( \sigma( e r(e) \otimes m) \otimes S ) \\
    &= (\mathrm{id}_M \otimes \rho_{F}) \alpha_{M,kF^1, L_k(F)}  ( \sigma( e \otimes m) \otimes S ) \\
    &= (\mathrm{id}_M \otimes \rho_{F}) \alpha_{M,kF^1, L_k(F)}  ( \sigma( \theta_e (m)) \otimes S ) \\
    &= S_{e}( m \otimes S ).
\end{align*}
Consequently, $S_e P_{r(e)} = P_{s(e)} S_e = S_e$.   

\medskip

\noindent \textsc{Relation~3:}  Next we show that $P_{r(e)} S_{e^*} = S_{e^*} P_{s(e)} = S_{e^*}$.  Note that if $x \in kE^1$, then 
$$ \mu_{r(e)} \theta_{e^*} ( x \otimes m ) = \mu_{r(e)} ( (e^*x) m )  = ( r(e) (e^*x) ) m 
					= (e^*x) m = \theta_{e^*}(x \otimes m),  \ \ \mbox{and}$$
$$\theta_{e^*} \mu_{s(e)} ( x \otimes m) = \theta_{e^*}( s(e) x \otimes m ) 
							= (e^*  s(e) x )  m \\
							= (e^*x ) m \\
							= \theta_{e^*} ( x \otimes m). $$

\noindent Therefore $\mu_{r(e)} \theta_{e^*} = \theta_{e^*}$ and $\theta_{e^*} \mu_{s(e)} = \theta_{e^*}$. Hence
\begin{align*}
P_{r(e)} S_{e^*} &(m \otimes S)  \\
&=  (\mu_{r(e)} \otimes \mathrm{id}_{L_k(F) })S_{e^*} (m \otimes S)    \\
&= (\mu_{r(e)} \otimes \mathrm{id}_{L_k(F) })(\theta_{e^*} \otimes \mathrm{id}_{ L_k(F) } )(\sigma^{-1}  \otimes \mathrm{id}_{L_k(F)}) \alpha_{M,kF^1, L_k(F)}^{-1}( \mathrm{id}_M \otimes \rho_F^{-1})  (m \otimes S)\\
&= (\mu_{r(e)} \theta_{e^*} \otimes \mathrm{id}_{ L_k(F) } )(\sigma^{-1}  \otimes \mathrm{id}_{L_k(F)}) \alpha_{M,kF^1, L_k(F)}^{-1}( \mathrm{id}_M \otimes \rho_F^{-1})  (m \otimes S)\\
&= (\theta_{e^*} \otimes \mathrm{id}_{ L_k(F) } )(\sigma^{-1}  \otimes \mathrm{id}_{L_k(F)}) \alpha_{M,kF^1, L_k(F)}^{-1}( \mathrm{id}_M \otimes \rho_F^{-1}) (m \otimes S) \\
&= S_{e^*}(m\otimes S)
\end{align*}
and
\begin{align*}  
S_{e^*}  P_{s(e)} ( m \otimes S ) & =  S_{e^*} ( s(e) m \otimes S ) \\
&= (\theta_{e^*} \otimes \mathrm{id}_{ L_k(F) } )(\sigma^{-1}  \otimes \mathrm{id}_{L_k(F)}) \alpha_{M,kF^1, L_k(F)}^{-1}( \mathrm{id}_M \otimes \rho_F^{-1})  ( s(e) m \otimes S ) \\
&= (\theta_{e^*}\mu_{s(e)} \otimes \mathrm{id}_{ L_k(F) } ) (\sigma^{-1}  \otimes \mathrm{id}_{L_k(F)}) \alpha_{M,kF^1, L_k(F)}^{-1}( \mathrm{id}_M \otimes \rho_F^{-1})  ( m \otimes S )  \\
&= (\theta_{e^*} \otimes \mathrm{id}_{ L_k(F) } )(\sigma^{-1}  \otimes \mathrm{id}_{L_k(F)}) \alpha_{M,kF^1, L_k(F)}^{-1}( \mathrm{id}_M \otimes \rho_F^{-1}) (m\otimes S) \\
&= S_{e^*} ( m \otimes S ).\end{align*}
Thus $P_{r(e)} S_{e^*} = S_{e^*}$ and $S_{e^*}P_{s(e)} = S_{e^*}$.

\medskip

\noindent \textsc{Relation~4:}   We note first that for $e,e' \in E^1$, 
$$\theta_{e^*}\theta_{e'} \otimes \mathrm{id}_{ L_k(F) } (m \otimes S) = \theta_{e^*}(e' \otimes m) \otimes S = (e^*e')m \otimes S 
    = \delta_{e,e'} r(e) m \otimes S 
    =  \delta_{e,e'} P_{r(e)}(m \otimes S),$$
    so that $\theta_{e^*}\theta_{e'} \otimes \mathrm{id}_{ L_k(F) } =  \delta_{e,e'} P_{r(e)}$.  Hence
\begin{align*}
    S_{e^*}S_{e'}  &= (\theta_{e^*} \sigma^{-1} \otimes \mathrm{id}_{ L_k(F) } ) \alpha_{M,kF^1, L_k(F)}^{-1}(\mathrm{id}_M \otimes \rho_F^{-1}) (\mathrm{id}_M \otimes \rho_{F}) \alpha_{M,kF^1, L_k(F)}( \sigma \theta_{e'} \otimes \mathrm{id}_{L_k(F)} ) \\
                &= (\theta_{e^*} \sigma^{-1} \otimes \mathrm{id}_{ L_k(F) } ) ( \sigma \theta_{e'} \otimes \mathrm{id}_{L_k(F)} ) \\
                &= \theta_{e^*}\theta_{e'} \otimes \mathrm{id}_{ L_k(F) } =  \delta_{e,e'} P_{r(e)}.
\end{align*}

\noindent \textsc{Relation~5:}   For the final relation, we first observe
\begin{align*}
 & \sum_{ e \in s^{-1}(v) } S_e S_{e^*} \\
& = \sum_{ e \in s^{-1}(v) } (\mathrm{id}_M \otimes \rho_{F}) \alpha_{M,kF^1, L_k(F)}( \sigma \theta_e \otimes \mathrm{id}_{L_k(F)} )(\theta_{e^*} \sigma^{-1} \otimes \mathrm{id}_{ L_k(F) } ) \alpha_{M,kF^1, L_k(F)}^{-1}(\mathrm{id}_M \otimes \rho_F^{-1}) \\
&= \sum_{ e \in s^{-1}(v) } (\mathrm{id}_M \otimes \rho_{F}) \alpha_{M,kF^1, L_k(F)}( \sigma \theta_e \theta_{e^*} \sigma^{-1} \otimes \mathrm{id}_{L_k(F)}) \alpha_{M,kF^1, L_k(F)}^{-1}(\mathrm{id}_M \otimes \rho_F^{-1}) \\
&=(\mathrm{id}_M \otimes \rho_{F}) \alpha_{M,kF^1, L_k(F)} \left(  \sum_{ e \in s^{-1}(v) }    \sigma \theta_e \theta_{e^*} \sigma^{-1} \otimes \mathrm{id}_{L_k(F)} \right) \alpha_{M,kF^1, L_k(F)}^{-1}(\mathrm{id}_M \otimes \rho_F^{-1}).
\end{align*}

If $\sigma^{-1}( m \otimes f) = \sum_{ i = 1}^n e_i \otimes m_i$ for $e_1, \ldots, e_n \in E^1$ and $m_1, \ldots, m_n \in M$, then

\begin{align*}
&\left( \sum_{ e \in s^{-1}(v) } 
\sigma \theta_e \theta_{e^*} \sigma^{-1} \otimes \mathrm{id}_{L_k(F)} \right)    (m \otimes f) \otimes S) \\
&\qquad =  \sum_{ e \in s^{-1}(v) } \sum_{ i = 1}^n \sigma ( e \otimes  (e^*e_i)m_i ) \otimes S = \sum_{ e \in s^{-1}(v) } \sum_{ i = 1}^n \sigma ( e (e^*e_i) \otimes m_i ) \otimes S \\
&\qquad = \left( \sum_{ i = 1}^n \sigma \left( \sum_{ e \in s^{-1}(v) }  e (e^*e_i) \otimes m_i  \right) \right) \otimes S 
= \left( \sum_{ i = 1}^n \sigma ( v e_i \otimes m_i)  \right) \otimes S \\
&\qquad = v  \left( \sum_{ i = 1}^n \sigma( e_i \otimes m_i) \otimes S \right) =  v \left( \sigma \left( \sum_{ i = 1}^n  e_i \otimes m_i \right) \otimes S \right) \\
&\qquad = v ( (m \otimes f) \otimes S ) =  ( v(m \otimes f) \otimes S ) \\
&\qquad = (\mu_v \otimes \mathrm{id}_{kF^1}\otimes \mathrm{id}_{L_k(F)})(( m \otimes f) \otimes S).
\end{align*}
Hence,
\begin{align*}
 & \sum_{ e \in s^{-1}(v) } S_e S_{e^*} \\
&= (\mathrm{id}_M \otimes \rho_{F}) \alpha_{M,kF^1, L_k(F)}(\mu_v \otimes \mathrm{id}_{kF^1}\otimes \mathrm{id}_{L_k(F)}) \alpha_{M,kF^1, L_k(F)}^{-1}(\mathrm{id}_M \otimes \rho_F^{-1}) \\
&= P_v,
\end{align*}
as the final composition of functions is easily seen to simplify to $\mu_v \otimes \mathrm{id}_{L_k(F)}$.

\smallskip

\noindent Relations 1 to 5 verify that  the set $\{ P_v, S_e, S_{e^* } : v \in E^0, e \in E^1 \}$ is a Cuntz-Krieger $E$-family in $\mathrm{End}_{L_k(F) } ( M \otimes_{kF^1} L_k(F) )$.  

Consequently, by the universal property of $L_k(E)$ there exists a $k$-algebra homomorphism 
$$
\phi \colon L_k(E) \to \mathrm{End}_{L_k(F) } ( M \otimes_{kF^1} L_k(F) )
$$
for which $\phi(v) = P_v$, $\phi(e) = S_e$, and $\phi(e^*) = S_{e^*}$.  

Since $M$ is a left $kE^0$-module we have by definition that $1_{kE^0} m = m$ for all $m\in M$, so that   $ ( \sum_{v\in E^0}v) m = m$; this gives  $(\sum_{v\in E^0}\mu_v) (m) = m$, which implies that $$\sum_{v\in E^0}P_v = \sum_{v\in E^0} \mu_v \otimes  {\rm id}_{L_k(F)} =  {\rm id}_{M\otimes_{kF^1} L_k(F)}.$$   
Thus    $\phi(1_{L_k(E)}) (z)    =  \phi(\sum_{v\in E^0}v) (z) = (\sum_{v\in E^0} P_v) (z) = z$ for all $z\in  M \otimes_{kF^1} L_k(F)$.    
With this observation so noted, it is standard to conclude that the right $L_k(F)$-module $M \otimes_{kF^1} L_k(F)$ becomes an $L_k(E)$--$L_k(F)$-bimodule by defining the left $L_k(E)$-action as $$a \cdot z := \phi(a)(z)$$ for all  $a \in L_k(E)$ and $z \in M \otimes_{kF^1} L_k(F)$.

Since $\phi(v) = P_v$, $\phi(e) = S_e$, and $\phi(e^*) = S_{e^*}$, it is straightforward to verify (1)--(3) in the theorem's statement.

Finally, we verify the $k$-subspaces $\{ Y_n : n \in \mathbb{Z}\}$ provide a bimodule  $\mathbb{Z}$-grading.  It is straightforward to verify $M \otimes_{kF^1} L_k(F) = \bigoplus_{n\in \mathbb{Z}} Y_n$ as abelian groups.  Further, it is clear that $Y_n L_k(F)_m \subseteq Y_{n+m}$ for all $m,n \in \ZZ$.   In addition, the explicit description of the left module action of $L_k(E)$ on $M \otimes_{kF^1} L_k(F)$ given in (1)--(3) yields that $v Y_n \subseteq Y_n$, $eY_n \subseteq Y_{n+1}$, and $e^* Y_n \subseteq Y_{n-1}$ for all $n \in \mathbb{Z}$ and for all $v \in E^0$ and $e \in E^1$.  Since the elements $v, e, e^*$  generate $L_k(E)$ as a $k$-algebra, we conclude $L_k(E)_m Y_n  \subseteq Y_{n+m}$ for all $m,n \in \ZZ$.    
\end{proof}

\begin{dfn} \label{bridging-bimodule-def}
Let $E$ and $F$ be finite graphs with no sinks.  For a specified conjugacy pair $(M, \sigma)$, we shall use the notation
$$Y_{(M,\sigma)} := M \otimes_{kF^0} L_k(F)$$ 
to denote the $\mathbb{Z}$-graded $L_k(E)$--$L_k(F)$-bimodule of Theorem~\ref{thm:bimodule-action}.  We call $Y_{(M,\sigma)}$ the \emph{bridging bimodule for} $(M, \sigma)$.
\end{dfn}

\section{Properties of the Bridging Bimodule} \label{bimodule-key-properties-sec}

We start this section by describing  a notion of equivalence for specified conjugacy pairs,  and subsequently show that equivalent specified conjugacy pairs produce graded-isomorphic bridging bimodules.

\begin{dfn} \label{equivalent-conj-pairs-def}
Let $E$ and $F$ be finite graphs with no sinks.   Suppose $(M_1, \sigma_1)$ and $(M_2, \sigma_2)$ are specified conjugacy pairs from $E$ to $F$.  We say $(M_1, \sigma_1)$ is \emph{equivalent to}  $(M_2, \sigma_2)$ if there is a $kE^0$--$kF^0$-bimodule isomorphism $\phi \colon \ M_1 \to {M_2}$ for which the following diagram commutes:
\begin{equation}\label{eq-equivalent-conj-pairs-def}
\vcenter{
\xymatrix{
kE^1 \otimes_{ kE^0 } M_1 \ar[r]^-{\sigma_1} \ar[d]_{\mathrm{id}_{kE^1} \otimes \phi }& M_1 \otimes_{ kF^0 } kF^1 \ar[d]^{\phi \otimes \mathrm{id}_{kF^1} } \\
kE^1 \otimes_{ kE^0 } {M_2}  \ar[r]_-{\sigma_2} & {M_2} \otimes_{ kF^0 } kF^1.
}
}
\end{equation}
It is straightforward to verify that this is an equivalence relation on the set of specified conjugacy pairs from $E$ to $F$.
\end{dfn}

\begin{prp}\label{prop:MUF-iso}
Let $E$ and $F$ be finite graphs with no sinks, and let $(M_1, \sigma_1)$ and $(M_2, \sigma_2)$ be specified conjugacy pairs from $E$ to $F$.  If $(M_1, \sigma_1)$ is equivalent to  $(M_2, \sigma_2)$ in the sense of Definition~\ref{equivalent-conj-pairs-def}, then the bridging bimodules $Y_{(M_1, \sigma_1)}$ and $Y_{({M_2},\sigma_2)}$ are graded isomorphic as $L_k(E)$--$L_k(F)$-bimodules.
\end{prp}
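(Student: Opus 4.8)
The plan is to show that the obvious candidate map $\Phi := \phi \otimes \mathrm{id}_{L_k(F)}$ is the desired graded bimodule isomorphism. Since $\phi \colon M_1 \to M_2$ is a $kE^0$--$kF^0$-bimodule isomorphism and $\mathrm{id}_{L_k(F)}$ is trivially a left $kF^0$-module map, the assignment $m \otimes S \mapsto \phi(m) \otimes S$ descends to a well-defined $k$-linear map $\Phi \colon M_1 \otimes_{kF^0} L_k(F) \to M_2 \otimes_{kF^0} L_k(F)$ on the balanced tensor product, with two-sided inverse $\phi^{-1} \otimes \mathrm{id}_{L_k(F)}$. Because the right $L_k(F)$-action and the $\mathbb{Z}$-grading of the bridging bimodule (Theorem~\ref{thm:bimodule-action}) are defined entirely through the $L_k(F)$ tensor factor, which $\Phi$ leaves untouched, $\Phi$ is immediately a graded right $L_k(F)$-module homomorphism: it sends $Y_n = \operatorname{span}_k\{m \otimes S : S \in L_k(F)_n\}$ into the corresponding graded component of $Y_{(M_2,\sigma_2)}$ and commutes with right multiplication. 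These verifications are routine.

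The substance of the argument is to check that $\Phi$ is also a left $L_k(E)$-module homomorphism. Since $L_k(E)$ is generated as a $k$-algebra by $\{v, e, e^* : v \in E^0, e \in E^1\}$, it suffices to verify $\Phi(a \cdot z) = a \cdot \Phi(z)$ for these generators and for $z = m \otimes S$, using the explicit left-action formulas (1)--(3) of Theorem~\ref{thm:bimodule-action}. The case $a = v$ follows at once from left $kE^0$-linearity of $\phi$, since $\Phi(v \cdot (m \otimes S)) = \phi(vm) \otimes S = v\phi(m) \otimes S = v \cdot \Phi(m \otimes S)$. The crucial input for the edge generators is the commuting square~\eqref{eq-equivalent-conj-pairs-def} of Definition~\ref{equivalent-conj-pairs-def}, namely $(\phi \otimes \mathrm{id}_{kF^1}) \circ \sigma_1 = \sigma_2 \circ (\mathrm{id}_{kE^1} \otimes \phi)$, which is precisely the compatibility of $\phi$ with the two specified isomorphisms.

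For $a = e$, writing $\sigma_1(e \otimes m) = \sum_i m_i \otimes y_i$, formula (2) gives $\Phi(e \cdot (m \otimes S)) = \sum_i \phi(m_i) \otimes y_i S$; on the other side the commuting square yields $\sigma_2(e \otimes \phi(m)) = (\phi \otimes \mathrm{id})\sigma_1(e \otimes m) = \sum_i \phi(m_i) \otimes y_i$, so formula (2) applied in $Y_{(M_2,\sigma_2)}$ produces the same expression for $e \cdot \Phi(m \otimes S)$. The case $a = e^*$ is the main obstacle, since formula (3) is phrased through $\sigma^{-1}$ rather than $\sigma$; here I would first invert the commuting square to obtain $\sigma_2^{-1} = (\mathrm{id}_{kE^1} \otimes \phi) \circ \sigma_1^{-1} \circ (\phi^{-1} \otimes \mathrm{id}_{kF^1})$, so that $\sigma_2^{-1}(\phi(m) \otimes f) = (\mathrm{id} \otimes \phi)\,\sigma_1^{-1}(m \otimes f)$ for each $f \in F^1$. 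Applying formula (3) and using that each coefficient $e^* y_i^f$ lies in $kE^0$, so it passes through $\phi$ by left $kE^0$-linearity, both $\Phi(e^* \cdot (m \otimes S))$ and $e^* \cdot \Phi(m \otimes S)$ collapse to $\sum_{f \in F^1} \sum_i (e^* y_i^f)\phi(m_i^f) \otimes f^* S$. This establishes left $L_k(E)$-linearity on generators and completes the verification that $\Phi$ is a graded $L_k(E)$--$L_k(F)$-bimodule isomorphism.
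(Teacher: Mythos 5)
Your proposal is correct and follows essentially the same route as the paper's proof: both use the map $\phi \otimes \mathrm{id}_{L_k(F)}$, verify right $L_k(F)$-linearity and the grading directly, and check left $L_k(E)$-linearity on the generators $v$, $e$, $e^*$ via the commuting square of Definition~\ref{equivalent-conj-pairs-def} (inverted for the $e^*$ case) together with the observation that $e^* y_i^f \in kE^0$ commutes past $\phi$.
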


\begin{proof}
We shall show that 
$$\phi \otimes \mathrm{id}_{L_k(F)} \colon Y_{(M_1,\sigma_1)} \to Y_{({M_2},\sigma_2)}$$ is   a graded bimodule isomorphism.  It is clear that $\phi \otimes \mathrm{id}_{L_k(F)}$ is a right $L_k(F)$-module isomorphism with inverse $\phi^{-1} \otimes \mathrm{id}_{L_k(F)}$.  We now  show that $\phi \otimes \mathrm{id}_{L_k(F)}$ is a left $L_k(E)$-module homomorphism. To this end, let $v \in E^0$,  $m \in M_1$, and $S \in L_k(F)$.  Then 
\begin{align*}
(\phi \otimes \mathrm{id}_{L_k(F)})( v \cdot ( m \otimes S )) &= (\phi \otimes \mathrm{id}_{L_k(F)}) ( (vm) \otimes S )  \\ 
				& = \phi(vm)\otimes S 
				= (v \phi(m)) \otimes S \\
				 & = v \cdot (\phi \otimes \mathrm{id}_{L_k(F)})(m \otimes S).
\end{align*}

Next we show that  
$$(\phi \otimes \mathrm{id}_{L_k(F)})  ( e \cdot ( m \otimes S )) = e \cdot ((\phi \otimes \mathrm{id}_{L_k(F)})( m\otimes S))$$ for all $m \in M_1$, $S\in L_k(F)$, and $e\in E^1$. Write  $\sigma_1(e\otimes m) =  \sum_{i=1}^n m_i\otimes y_i$ for $m_i \in M_1$ and $y_i \in kF^1$.  So by the explicit description of the left $L_k(E)$-action on $Y_{(M_1,\sigma_1)}$ given in Theorem \ref{thm:bimodule-action}(2), we have  $e\cdot (m\otimes S) = \sum_{i=1}^n m_i \otimes y_iS$, which then gives  
$(\phi \otimes \mathrm{id}_{L_k(F)})  ( e \cdot ( m \otimes S ))
= (\phi \otimes {\rm id}_{L_k(F)}) (\sum_{i=1}^n m_i \otimes y_iS)
= \sum_{i=1}^n \phi(m_i) \otimes y_iS$.  Applying the commuting diagram in \eqref{eq-equivalent-conj-pairs-def} to $e\otimes m$ we get 
$$\sum_{i=1}^n \phi(m_i) \otimes y_i  \ = \ \sigma_2(e\otimes \phi(m)).$$
Now to describe $e \cdot ((\phi \otimes \mathrm{id}_{L_k(F)})( m\otimes S)) = e \cdot (\phi(m)\otimes S)$, we need to calculate the expression $\sigma_2 (e \otimes \phi(m))$.   But this is precisely  $\sum_{i=1}^n \phi(m_i) \otimes y_i  $ as displayed above.   So $e \cdot ((\phi \otimes \mathrm{id}_{L_k(F)})( m\otimes S)) = e \cdot (\phi(m)\otimes S) =  \sum_{i=1}^n \phi(m_i) \otimes y_iS$ by Theorem \ref{thm:bimodule-action}(2), which is precisely $(\phi \otimes \mathrm{id}_{L_k(F)})  ( e \cdot ( m \otimes S ))$ as desired.

Further, we show 
$$(\phi \otimes \mathrm{id}_{L_k(F)})  ( e^* \cdot ( m \otimes S )) = e^* \cdot (\phi \otimes \mathrm{id}_{L_k(F)})( m\otimes S)$$ for all $m\in M_1$, $S\in L_k(F)$, and $e \in E^1$.  For each $f \in F^1$, let  $\sigma_1^{-1} (m \otimes f) = \sum_{i=1}^{n_f} y_i^f \otimes m_i^f$, where $y_1^f, \ldots, y_{n_f}^f \in kE^1$ and $m_1^f, \ldots, m_{n_f}^f \in M_1$.  By the explicit description of the left $L_k(E)$-action on $Y_{(M_1, \sigma_1)}$ given in Theorem~\ref{thm:bimodule-action}(3), $e^* \cdot (m \otimes S) =\sum_{f \in F^1} \sum_{i=1}^{n_f} (e^* y_i^f) m_i^f \otimes f^* S$.  Therefore,
$$
(\phi \otimes \mathrm{id}_{L_k(F)})(e^* \cdot (m \otimes S)) = \sum_{f \in F^1} \sum_{i=1}^{n_f} \phi((e^* y_i^f) m_i^f) \otimes f^* S = \sum_{f \in F^1} \sum_{i=1}^{n_f} (e^* y_i^f) \phi(m_i^f) \otimes f^* S,
$$
where the last equality follows from the fact that $e^* y_i^f \in kE^0$ for all $f \in F^1$ and for all $i$, and the fact that $\phi$ is a $kE^0$--$kF^0$-bimodule map.  To compute $e^* \cdot ( \phi \otimes \mathrm{id}_{L_k(F)} ) (m \otimes S) = e^*\cdot ( \phi(m) \otimes S)$, we use Theorem~\ref{thm:bimodule-action}(3) again, which requires the values of $\sigma_2^{-1}( \phi(m) \otimes f)$ for all $f \in F^1$.  By the commutative diagram in \eqref{eq-equivalent-conj-pairs-def}, $\sigma_2^{-1} =(\mathrm{id}_{kE^1} \otimes \phi) \circ \sigma_1^{-1} \circ ( \phi^{-1} \otimes \mathrm{id}_{kF^1})$.  Therefore, for all $f \in F^1$,  $\sigma_2^{-1} ( \phi(m) \otimes f) = \sum_{ i = 1}^{n_f} y_i^f \otimes \phi(m_i^f)$. Thus, by the explicit description of the left $L_k(E)$-action on $Y_{(M_2, \sigma_2)}$, we get
$e^* \cdot ( \phi(m) \otimes S) = \sum_{ f \in F^1} \sum_{ i = 1}^{n_f} (e^*y_i^f) \phi(m_i^f) \otimes f^*S$.  Hence $(\phi \otimes \mathrm{id}_{L_k(F)})(e^* \cdot (m \otimes S))  = \sum_{f \in F^1} \sum_{i=1}^{n_f} (e^* y_i^f) \phi(m_i^f) \otimes f^* S= e^*\cdot ( \phi \otimes \mathrm{id}_{L_k(F)} )(m \otimes S)$ as desired.

Since $L_k(E)$ is generated by $\{ v, e,e^* : v \in E^0, e \in E^1 \}$, we conclude that $\phi \otimes \mathrm{id}_{L_k(F)}$ respects the left $L_k(E)$-action. Thus we have established that  $\phi \otimes \mathrm{id}_{L_k(F)}$ is an $L_k(E)$--$L_k(F)$-bimodule isomorphism from $Y_{(M_1, \sigma_1)}$ to $Y_{({M_2},\sigma_2)}$.  

Finally, using the description of the grading given in Theorem~\ref{thm:bimodule-action} it is straightforward to verify $\phi \otimes \mathrm{id}_{L_k(F)}$ is a graded isomorphism.
\end{proof}

Suppose $(M, \sigma)$ is a specified conjugacy pair from $E$ to $F$ and that $(N, \psi)$ is a specified conjugacy pair from $F$ to $G$.   We wish to compose $(M, \sigma)$ and $(N, \psi)$ to form a specified conjugacy pair from $E$ to $G$.  The way to do so is straightforward but involved, so we explain it in steps.

Given a specified conjugacy pair $(M, \sigma)$ from $E$ to $F$ and a specified conjugacy pair $(N, \psi)$ from $F$ to $G$ we have isomorphisms
$$ \sigma : kE^1 \otimes_{kE^0} M \to M \otimes_{kF^0} kF^1 \qquad \text{ and } \qquad
 \psi : kF^1 \otimes_{kF^0} N \to N \otimes_{kG^0} kG^1.
$$
For the composition conjugacy it is natural to use the tensor product $M \otimes_{kF^0} N$.  Thus to obtain the specified conjugacy pair that is the composition, we seek an isomorphism from $kE^1 \otimes_{kE^0} (M \otimes_{kF^0} N)$ to $(M \otimes_{kF^0} N) \otimes_{kG^0} kG^1$.  To construct this isomorphism, we observe that we have the following sequence of $k$-vector space isomorphisms, where the three $\alpha$ maps are  the associativity isomorphisms as described in Notation~\ref{associativity-iso-notation}.
$$
\xymatrix{
kE^1 \otimes_{kE^0} (M \otimes_{kF^0} N) \ar[rr]^{\alpha_{kE^1, M, N}^{-1}} & &
(kE^1 \otimes_{kE^0} M) \otimes_{kF^0} N \ar[r]^{\sigma \otimes \operatorname{id}_{N}} & (M \otimes_{kF^0} kF^1) \otimes_{kF^0} N \ar[dlll]_(0.6){\alpha_{M, kF^1, N}} \\
M \otimes_{kF^0} (kF^1 \otimes_{kF^0} N) \ar[rr]_{ \operatorname{id}_{M} \otimes \psi} & & M \otimes_{kF^0} (N \otimes_{kG^0} kG^1) \ar[r]_{\ \alpha_{M, N, kG^1}^{-1}} & (M \otimes_{kF^0} N) \otimes_{kG^0} kG^1.
}
$$
This motivates the following definition.
\begin{dfn} \label{hash-def}
Let $(M, \sigma)$ be a specified conjugacy pair from $E$ to $F$, and let $(N, \psi)$ be a specified conjugacy pair from $F$ to $G$.  We define a $kE^0$--$kG^0$-bimodule isomorphism $$\sigma \# \psi : kE^1 \otimes_{kE^0} (M \otimes_{kF^0} N) \to (M \otimes_{kF^0} N) \otimes_{kG^0} kG^1$$ by setting 
\begin{equation} \label{hash-def-eq}
\sigma \# \psi := 
\alpha_{M, N, kG^1}^{-1}
\circ
(\operatorname{id}_{M} \otimes \psi)
\circ
\alpha_{M, kF^1, N}
\circ
(\sigma \otimes \operatorname{id}_{N})
\circ
\alpha_{kE^1, M, N}^{-1}.
\end{equation}
\end{dfn}

\begin{rmk}
In the definition of $\sigma \# \psi$ appearing in Display  \eqref{hash-def-eq} there are a number of associativity maps appearing, which make the definition ostensibly cumbersome.  If one mentally suppresses these associativity isomorphisms, then heuristically  $\sigma \# \psi$ is the composition of the map 
$$ \sigma \otimes \operatorname{id}_{N} :
(kE^1 \otimes_{kE^0} M) \otimes_{kF^0} N \to
(M \otimes_{kF^0} kF^1) \otimes_{kF^0} N $$
followed by
$$
\operatorname{id}_{M} \otimes \psi : M \otimes_{kF^0} (kF^1 \otimes_{kF^0} N) \to M \otimes_{kF^0} (N \otimes_{kG^0} kG^1).
$$
Thus one can ``think of" $\sigma \# \psi$ as being $(\operatorname{id}_{M} \otimes \psi) \circ (\sigma \otimes \operatorname{id}_{N})$ provided the proper associativity identifications are made.  However, one must keep in mind  that $(\operatorname{id}_{M} \otimes \psi) \circ ( \sigma \otimes \operatorname{id}_{N})$ is \emph{not} equal to $\sigma \otimes \psi$, and indeed this latter expression does not even make sense due to the domains and codomains of the functions involved.
\end{rmk}

\begin{prp}\label{prp-griso-tensor}
Let $(M, \sigma)$ be a specified conjugacy pair from $E$ to $F$, and let $(N, \psi)$ be a specified conjugacy pair from $F$ to $G$.  Then $$Y_{ (M \otimes_{kF^0} N, \sigma \# \psi)} \cong Y_{(M, \sigma)} \otimes_{L_k(F)} Y_{(N, \psi)}$$ as graded $L_k(E)$--$L_k(G)$-bimodules.
\end{prp}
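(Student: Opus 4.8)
The plan is to exhibit an explicit graded bimodule isomorphism
$$\Theta \colon Y_{(M,\sigma)} \otimes_{L_k(F)} Y_{(N,\psi)} \longrightarrow Y_{(M \otimes_{kF^0} N,\, \sigma \# \psi)}$$
and verify it respects all of the structure. Writing out both sides, the domain is $(M \otimes_{kF^0} L_k(F)) \otimes_{L_k(F)} (N \otimes_{kG^0} L_k(G))$ and the codomain is $(M \otimes_{kF^0} N) \otimes_{kG^0} L_k(G)$. First I would build $\Theta$ as the composite of the standard cancellation isomorphism $(M \otimes_{kF^0} L_k(F)) \otimes_{L_k(F)} Q \cong M \otimes_{kF^0} Q$ (with $Q = Y_{(N,\psi)}$; this uses that $L_k(F)$ is unital, which holds since $F$ is finite) followed by $\alpha_{M,N,L_k(G)}^{-1}$. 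Since every elementary tensor satisfies $(m \otimes S) \otimes (n \otimes T) = (m \otimes 1_{L_k(F)}) \otimes (S \cdot (n \otimes T))$, and $S \cdot (n \otimes T)$ is a finite sum of simple tensors in $Y_{(N,\psi)}$, the elements $(m \otimes 1_{L_k(F)}) \otimes (n \otimes T)$ span the domain, and on these $\Theta$ is given by $(m \otimes 1_{L_k(F)}) \otimes (n \otimes T) \mapsto (m \otimes n) \otimes T$. Both constituent maps are isomorphisms of right $L_k(G)$-modules, so $\Theta$ is as well; its gradedness is immediate from the grading descriptions in Theorem~\ref{thm:bimodule-action}, since $1_{L_k(F)} \in L_k(F)_0$ forces the degree of $(m \otimes 1_{L_k(F)}) \otimes (n \otimes T)$ to equal the degree of $T$.

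The real content is that $\Theta$ intertwines the two left $L_k(E)$-actions: on the domain this action is $a \cdot (w_1 \otimes w_2) = (a \cdot w_1) \otimes w_2$ using the action on $Y_{(M,\sigma)}$, whereas on the codomain it is the bridging action attached to the specified conjugacy pair $(M \otimes_{kF^0} N, \sigma \# \psi)$. Because $\Theta$ is $k$-linear and $L_k(E)$ is generated by $\{v, e, e^* : v \in E^0, e \in E^1\}$, it suffices to check the intertwining on these generators using the explicit formulas of Theorem~\ref{thm:bimodule-action}. The case $v \in E^0$ is immediate from part~(1): both sides send $(m \otimes 1_{L_k(F)}) \otimes (n \otimes T)$ to $(vm \otimes n) \otimes T$. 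The case $e \in E^1$ is where the definition of $\sigma \# \psi$ does exactly the work it was designed for. Writing $\sigma(e \otimes m) = \sum_i m_i \otimes y_i$ and $\psi(y_i \otimes n) = \sum_j n_{ij} \otimes z_{ij}$, the domain side computes, via part~(2) applied first to $(M,\sigma)$ and then to $(N,\psi)$, to $\sum_{i,j} (m_i \otimes 1_{L_k(F)}) \otimes (n_{ij} \otimes z_{ij} T)$, whose image under $\Theta$ is $\sum_{i,j} (m_i \otimes n_{ij}) \otimes z_{ij} T$. On the codomain side, a direct unwinding of Definition~\ref{hash-def} shows $(\sigma \# \psi)(e \otimes (m \otimes n)) = \sum_{i,j} (m_i \otimes n_{ij}) \otimes z_{ij}$, so part~(2) applied to $(M \otimes_{kF^0} N, \sigma \# \psi)$ yields exactly the same element; the chain of associativity maps in $\sigma \# \psi$ is precisely what reconciles the two nested applications of part~(2).

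The main obstacle will be the generator $e^*$, which requires the more intricate formula of Theorem~\ref{thm:bimodule-action}(3), together with the inverse $(\sigma \# \psi)^{-1} = \alpha_{kE^1,M,N} \circ (\sigma^{-1} \otimes \mathrm{id}_N) \circ \alpha_{M,kF^1,N}^{-1} \circ (\mathrm{id}_M \otimes \psi^{-1}) \circ \alpha_{M,N,kG^1}$. Here the bookkeeping is heavier because formula~(3) involves a sum over $F^1$ on the domain side and a sum over $G^1$ on the codomain side, and one must track how $\sigma^{-1}$ and $\psi^{-1}$ interact through the balancing of the tensor products; the key algebraic fact that keeps the computation honest is that $e^* y \in kE^0$ for $y \in kE^1$, so that the resulting scalars can be pulled across the $kE^0$-balanced tensor, exactly as in the proof of Proposition~\ref{prop:MUF-iso}. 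As in the $e$ case, the associativity maps assembled into $(\sigma \# \psi)^{-1}$ are arranged so that the iterated application of part~(3) — first contracting $\sigma^{-1}$ against $e^*$ on $Y_{(M,\sigma)}$ and then $\psi^{-1}$ on $Y_{(N,\psi)}$ — matches the single application of part~(3) for $(M \otimes_{kF^0} N, \sigma \# \psi)$ after transporting along $\Theta$. Once all three generator cases are verified, $\Theta$ intertwines the full left $L_k(E)$-action, and combined with the right $L_k(G)$-linearity and gradedness already noted, this shows $\Theta$ is a graded $L_k(E)$--$L_k(G)$-bimodule isomorphism, completing the proof.
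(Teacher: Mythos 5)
Your proposal is correct and follows essentially the same route as the paper: your map $\Theta$ is exactly the paper's inverse map $\tau = \alpha_{M,N,L_k(G)}^{-1}\circ(\mathrm{id}_M\otimes\mu)\circ\alpha_{M,L_k(F),N\otimes_{kG^0}L_k(G)}$ (the paper defines the isomorphism $\eta$ in the opposite direction and exhibits $\tau$ as its inverse), and the verification proceeds generator-by-generator on $v$, $e$, $e^*$ using the explicit action formulas of Theorem~\ref{thm:bimodule-action} just as in the paper. The only difference is that you sketch the $e^*$ case where the paper writes it out in full (including the reduction of the coefficients $y_i$ to honest edges and the step of moving $f^*$ across the $L_k(F)$-balanced tensor), but the ideas you identify there are the correct ones.
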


\begin{proof}

By definition, we have bimodule isomorphisms 
$$ \sigma : kE^1 \otimes_{kE^0} M \to M \otimes_{kF^0} kF^1 \qquad \text{ and } \qquad
 \psi : kF^1 \otimes_{kF^0} N \to N \otimes_{kG^0} kG^1.
$$

\noindent
Let $$\eta \colon Y_{ (M \otimes_{kF^0} N, \sigma \# \psi)} \to Y_{(M, \sigma)} \otimes_{L_k(F)} Y_{(N, \psi)}$$ be the $k$-vector space homomorphism resulting from setting  
\begin{equation} \label{phi-iso-def}
\eta ( (m\otimes n) \otimes T) = (m \otimes 1_{L_k(F)} ) \otimes (n \otimes T)
\end{equation}
for all $m \in M$, $n \in N$, and $T \in L_k(G)$.   It is clear from the definition of the right $L_k(G)$-actions on $Y_{ (M \otimes_{kF^0} N, \sigma \# \psi)}$ and  $Y_{(M, \sigma)} \otimes_{L_k(F)} Y_{(N, \psi)}$ that $\eta$ respects the right $L_k(G)$-actions.

We now show that $\eta$ respects the left $L_k(E)$-actions.  Since $L_k(E)$ is generated by $\{ v, e, e^* : v \in E^0, e \in E^1 \}$ it is enough to show that the following three equations hold:
\begin{align*}
\eta( v \cdot ( (m\otimes n) \otimes T)) &= v \cdot \eta((m\otimes n)\otimes T), \\
\eta( e \cdot ( (m\otimes n) \otimes T)) &= e \cdot \eta((m\otimes n)\otimes T), \quad \text{ and } \\
\eta( e^* \cdot ( (m\otimes n) \otimes T)) &= e^* \cdot \eta((m\otimes n)\otimes T).
\end{align*}
Since 
\begin{align*}
\eta( v \cdot ( (m\otimes n) \otimes T) ) &=  \eta( (( vm) \otimes n) \otimes T) = ((vm) \otimes 1_{L_k(F)}) \otimes (n\otimes T) \\
& = v \cdot ((m\otimes 1_{L_k(F)}) \otimes (n\otimes T) ) =v \cdot \eta((m\otimes n)\otimes T),\\
\end{align*}
 the first equation holds.  

For the second equation, write $\sigma( e \otimes m) = \sum_{ i = 1}^K m_i \otimes y_i$ for some $m_i \in M$,  $y_i \in kF^1$, and $K \in \mathbb{N}$.  By writing each $y_i$ as a $k$-linear combination of elements in $F^1$ and moving the scalars to the first tensor factor, we may assume that each $y_i \in F^1$.  Also, for each $1\leq i \leq K$ and $n\in N$,  write $\psi( y_i \otimes n) = \sum_{j = 1}^{ K_i } n_{j, i} \otimes z_{j,i}$, where $n_{j,i} \in N$,  $z_{j,i} \in kG^1$, and $K_i\in \mathbb{N}$.  Then 
$$
(\sigma \# \psi)( e \otimes (m \otimes n)) = \sum_{ i = 1}^K \sum_{ j = 1}^{K_i} (m_i \otimes n_{j,i} ) \otimes z_{j,i}.
$$
From the description of the germane left actions on $Y_{ (M \otimes_{kF^0} N , \sigma \# \psi)}$, $Y_{(M, \sigma)}$, and $Y_{ (N, \psi)}$ given in Theorem~\ref{thm:bimodule-action}(2), we have 
\begin{align*}
e \cdot ((m \otimes n) \otimes T) &=  \sum_{ i = 1}^K \sum_{ j = 1}^{K_i} (m_i \otimes n_{j,i}) \otimes (z_{j,i}T), \\
e \cdot (m \otimes 1_{L_k(F)} ) &= \sum_{ i = 1}^K m_i \otimes y_i, \qquad \qquad \text{ and} \\
y_i \cdot (n \otimes T) &= \sum_{j = 1}^{K_i} n_{j, i} \otimes (z_{j,i}T).
\end{align*}  
Using these, along with the definition of $\eta$ given in  (\ref{phi-iso-def}), we get 
\begin{align*}
\eta(e \cdot ((m \otimes n) \otimes T)) &=  \sum_{ i = 1}^K \sum_{ j = 1}^{K_i}  ( m_i \otimes 1_{L_k(F)} ) \otimes ( n_{j, i} \otimes (z_{j,i}T))\\
&= \sum_{i=1}^K ( m_i \otimes 1_{L_k(F)} ) \otimes \left(   \sum_{ j = 1}^{K_i}   n_{j, i} \otimes (z_{j,i}T)\right) \\
&= \sum_{ i = 1}^K ( m_i \otimes 1_{L_k(F)} )\otimes ( y_i \cdot  (n \otimes T)) =  \sum_{ i = 1}^K ( m_i \otimes y_i) \otimes ( n \otimes T)\\
&= (e \cdot (m \otimes 1_{L_k(F)})) \otimes ( n \otimes T)= e \cdot ( (m \otimes 1_{L_k(F)}) \otimes (n \otimes T)) \\
&= e \cdot \eta( (m \otimes n) \otimes T),
\end{align*}
showing the second equation holds.

For the third equation,  for $n\in N$ and $g\in G^1$, write $\psi^{-1} (n \otimes g) = \sum_{i=1}^{K_g} y_i^g \otimes n_i^g$, where $y_i^g \in kF^1$, $n_i^g \in N$, and $K_g \in \mathbb{N}$.  By writing each $y_i^g$ as a $k$-linear combination of elements in $F^1$ and moving the scalars to the second tensor factor, we may assume each $y_i^g \in F^1$.  Thus 
$$\psi^{-1} (n \otimes g) = \sum_{i=1}^{K_g} y_i^g \otimes n_i^g =  \sum_{i=1}^{K_g} y_i^g r(y_i^g) \otimes n_i^g = \sum_{i=1}^{K_g} y_i^g  \otimes r(y_i^g) n_i^g.$$  For each $f \in F^1$, write $\sigma^{-1}(m \otimes f) = \sum_{i=1}^{K_f} x_i^f \otimes m_i^f$ for some $x_i^f \in kE^1$, $m_i^f \in M$, and $K_f \in \mathbb{N}$.  Then 
$$
(\sigma \# \psi)^{-1} ( (m\otimes n) \otimes g) = \sum_{i = 1}^{K_g} \sum_{ j=1}^{K_{y_i^g}} x_{j}^{ y_i^g} \otimes ( m_j^{ y_i^g} \otimes r( y_i^g) n_i^g).  
$$
From the description of the germane left actions on $Y_{ (M \otimes_{kF^0} N , \sigma \# \psi)}$, $Y_{(M, \sigma)}$, and $Y_{ (N, \psi)}$ given in Theorem~\ref{thm:bimodule-action}(3), we have
\begin{align}
e^*\cdot ( (m\otimes n) \otimes T) &= \sum_{ g \in G^1}  \sum_{i = 1}^{K_g} \sum_{ j=1}^{K_{y_i^g}}  ( (e^*x_{j}^{ y_i^g}) m_j^{ y_i^g} \otimes r( y_i^g) n_i^g) \otimes g^*T \label{e*-three-tensor-eq} \\
e^*\cdot (m \otimes 1_{L_k(F)}) &= \sum_{ f \in F^1} \sum_{ j = 1}^{K_f} ((e^*x_{j}^{ f}) m_j^{ f}) \otimes f^*, \qquad \text{ and } \label{e*-m-1-eq} \\
f^*\cdot (n \otimes T) &= \sum_{ g \in G^1}  \sum_{i = 1}^{K_g} ((f^*y_i^g) n_i^g )\otimes g^*T. \label{f*-n-tensor-T-eq}
\end{align}  
Therefore 
\begin{align*}
&\  \eta ( e^* \cdot ( (m\otimes n) \otimes T) ) \\
&= \sum_{ g \in G^1}  \sum_{i = 1}^{K_g} \sum_{ j=1}^{K_{y_i^g}}  ( ((e^*x_{j}^{ y_i^g}) m_j^{ y_i^g}) \otimes 1_{L_k(F)}) \otimes ( r( y_i^g) n_i^g \otimes g^*T) \quad \text{(by \eqref{e*-three-tensor-eq} and the definition of $\eta$)} \\
&= \sum_{ g \in G^1}  \sum_{i = 1}^{K_g} \sum_{ j=1}^{K_f}  \sum_{f \in F^1} ( ((e^*x_{j}^{ f}) m_j^{ f}) \otimes 1_{L_k(F)}) \otimes ( ((f^*y_i^g) n_i^g )\otimes g^*T) \  \text{(by (CK1): $f^*y_i^g = \delta_{f,y_i^g} r(y_i^g)$)} \\
&= \sum_{ g \in G^1}  \sum_{i = 1}^{K_g} \sum_{f \in F^1} \sum_{ j=1}^{K_f}  ( ((e^*x_{j}^{ f}) m_j^{ f}) \otimes 1_{L_k(F)}) \otimes ( ((f^*y_i^g) n_i^g )\otimes g^*T)\\
&= \sum_{ f \in F^1} \sum_{ j = 1}^{K_f} ( ((e^*x_{j}^{ f}) m_j^{ f}) \otimes 1_{L_k(F)}) \otimes \left(   \sum_{ g \in G^1}  \sum_{i = 1}^{K_g} ((f^*y_i^g) n_i^g )\otimes g^*T\right)  \\
&=  \sum_{ f \in F^1} \sum_{ j = 1}^{K_f} ( ((e^*x_{j}^{ f}) m_j^{ f}) \otimes 1_{L_k(F)}) \otimes ( f^* \cdot ( n \otimes T)) \qquad \text{(by \eqref{f*-n-tensor-T-eq})} \\
&= \sum_{ f \in F^1} \sum_{ j = 1}^{K_f} ( ((e^*x_{j}^{ f}) m_j^{ f}) \otimes f^*) \otimes (n \otimes T) \qquad \text{(since the middle tensor product is over $L_k(F)$)}  \\
&= ( e^* \cdot ( m \otimes 1_{L_k(F)}))  \otimes (n \otimes T)  \qquad \text{(by \eqref{e*-m-1-eq})} \\
&= e^*\cdot  (( m \otimes 1_{L_k(F)})  \otimes (n \otimes T) )\\
& = e^*\cdot \eta ((m\otimes n) \otimes T) \qquad \text{(by \eqref{phi-iso-def})}, 
\end{align*}
and the third equation holds.  We may thus conclude that $\eta$ respects the left $L_k(E)$-actions.  Consequently $\eta$ is a $L_k(E)$--$L_k(G)$-bimodule homomorphism.

We now  prove that $\eta$ is in fact an isomorphism.  Let $\mu \colon L_k(F) \otimes_{L_k(F)} Y_{(N, \psi)} \to Y_{(N, \psi)}$ be the right $L_k(G)$-module isomomorphism given by $\mu( S \otimes (n \otimes T) ) = S \cdot (n \otimes T)$.  Let $$\tau \colon Y_{(M , \sigma) } \otimes_{L_k(F)} Y_{(N, \psi)} \to Y_{ (M \otimes_{kF^0} N, \sigma \# \psi)}$$ be the composition $\alpha_{ M , N, L_k(G)}^{-1} \circ ( \mathrm{id}_M \otimes \mu ) \circ \alpha_{ M , L_k(F), N \otimes_{kG^0} L_k(G) }$.  On elementary tensors, $\tau$ sends $( m \otimes S) \otimes (n \otimes T)$ to $\sum_{ i = 1}^K (m \otimes n_i) \otimes T_i$, where $S \cdot (n \otimes T) = \sum_{ i = 1}^K n_i \otimes T_i$.  Since $1_{L_k(F)} \cdot ( n \otimes T) = n \otimes T$, 
\begin{align*}
(\tau \circ \eta)( (m\otimes n) \otimes T) &= \tau ( (m \otimes 1_{L_k(F)})\otimes (n \otimes T)) = (m \otimes n) \otimes T \quad \text{and}\\
(\eta \circ \tau) ( ( m \otimes S) \otimes (n \otimes T)) &= \sum_{ i = 1}^K (m \otimes 1_{L_k(F)} )\otimes (n_i \otimes T_i) =  (m \otimes 1_{L_k(F)} )\otimes  ( S \cdot (n \otimes T)) \\
&= ( m \otimes S ) \otimes (n \otimes T).
\end{align*}
Therefore, $\eta$ is an isomorphism.

Using the gradings on $Y_{(M, \sigma)}$ and $Y_{(N, \psi)}$ given in Theorem~\ref{thm:bimodule-action}, $m \otimes 1_{L_k(F)}$ is in the zeroth component of $Y_{(M, \sigma)}$ and $n\otimes T \in (Y_{(N, \psi)})_\gamma$ for all $\gamma \in \ZZ$ and all $T \in L_k(G)_\gamma$.  Consequently, $(m \otimes 1_{L_k(F)}) \otimes (n \otimes T) \in ( Y_{(M, \sigma)} \otimes Y_{(N, \psi)})_\gamma$ for all $\gamma \in \ZZ$ and all $T \in L_k(G)_\gamma$.  Hence we have that $\eta$ is a graded isomorphism.
\end{proof}

\begin{prp} \label{hash-comp-well-defined-prop}
Let $(M_1, \sigma_1)$ and $(M_2, \sigma_2)$ be specified conjugacy pairs from $E$ to $F$, and let $(N_1, \psi_1)$ and $(N_2, \psi_2)$ be specified conjugacy pairs from $F$ to $G$.  If $(M_1, \sigma_1)$ is equivalent to $(M_2, \sigma_2)$ and  $(N_1,\psi_1)$ is equivalent to $(N_2, \psi_2)$, then the composition $(M_1 \otimes_{kF^0} N_1, \sigma_1 \# \psi_1)$ is equivalent to the composition $(M_2 \otimes_{kF^0} N_2, \sigma_2 \# \psi_2)$.
\end{prp}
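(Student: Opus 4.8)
The plan is to show that the tensor product $\phi \otimes \chi \colon M_1 \otimes_{kF^0} N_1 \to M_2 \otimes_{kF^0} N_2$ implements the desired equivalence, where $\phi \colon M_1 \to M_2$ and $\chi \colon N_1 \to N_2$ are the bimodule isomorphisms witnessing the two given equivalences, so that the diagrams of Definition~\ref{equivalent-conj-pairs-def} commute for each; explicitly,
\begin{equation*}
(\phi \otimes \mathrm{id}_{kF^1}) \circ \sigma_1 = \sigma_2 \circ (\mathrm{id}_{kE^1} \otimes \phi) \qquad \text{and} \qquad (\chi \otimes \mathrm{id}_{kG^1}) \circ \psi_1 = \psi_2 \circ (\mathrm{id}_{kF^1} \otimes \chi).
\end{equation*}
Since $\phi$ and $\chi$ are bimodule isomorphisms, $\phi \otimes \chi$ is a $kE^0$--$kG^0$-bimodule isomorphism, so the only thing to verify is that the square of Definition~\ref{equivalent-conj-pairs-def} for the composite pairs commutes, i.e.\ that
\begin{equation*}
((\phi \otimes \chi) \otimes \mathrm{id}_{kG^1}) \circ (\sigma_1 \# \psi_1) = (\sigma_2 \# \psi_2) \circ (\mathrm{id}_{kE^1} \otimes (\phi \otimes \chi)).
\end{equation*}

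To establish this I would recall the defining formula \eqref{hash-def-eq}, which writes each $\sigma_i \# \psi_i$ as a composition of five maps: two outer associators $\alpha^{-1}$, one inner associator $\alpha$, together with $\sigma_i \otimes \mathrm{id}_{N_i}$ and $\mathrm{id}_{M_i} \otimes \psi_i$. The strategy is then a diagram chase: decompose the large rectangle into five smaller squares, one per map, inserting at each intermediate stage the appropriate vertical arrow built from $\phi$, $\chi$, and identities. Reading down the columns from $kE^1 \otimes_{kE^0}(M_i \otimes_{kF^0} N_i)$ to $(M_i \otimes_{kF^0} N_i)\otimes_{kG^0} kG^1$, these intermediate vertical maps are $(\mathrm{id}_{kE^1} \otimes \phi) \otimes \chi$, then $(\phi \otimes \mathrm{id}_{kF^1}) \otimes \chi$, then $\phi \otimes (\mathrm{id}_{kF^1} \otimes \chi)$, then $\phi \otimes (\chi \otimes \mathrm{id}_{kG^1})$, ending at $(\phi \otimes \chi) \otimes \mathrm{id}_{kG^1}$.

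The three squares attached to the associators commute by naturality of the associator $\alpha$, which is immediate from the formula in Notation~\ref{associativity-iso-notation}: for bimodule maps $f,g,h$ one checks on elementary tensors $(x \otimes y)\otimes z$ that $\alpha_{X_2,Y_2,Z_2} \circ ((f \otimes g) \otimes h) = (f \otimes (g \otimes h)) \circ \alpha_{X_1,Y_1,Z_1}$, and both sides send this element to $f(x) \otimes (g(y) \otimes h(z))$. For the square attached to $\sigma_i \otimes \mathrm{id}_{N_i}$, the interchange law $(a \otimes b)\circ(c \otimes d) = (a \circ c) \otimes (b \circ d)$ shows both legs are of the form $(-) \otimes \chi$, so commutativity reduces to the first displayed hypothesis $(\phi \otimes \mathrm{id}_{kF^1}) \circ \sigma_1 = \sigma_2 \circ (\mathrm{id}_{kE^1} \otimes \phi)$, i.e.\ the equivalence of $(M_1,\sigma_1)$ and $(M_2,\sigma_2)$. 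Symmetrically, the square attached to $\mathrm{id}_{M_i} \otimes \psi_i$ has both legs of the form $\phi \otimes (-)$, so it reduces to the second hypothesis $(\chi \otimes \mathrm{id}_{kG^1}) \circ \psi_1 = \psi_2 \circ (\mathrm{id}_{kF^1} \otimes \chi)$, i.e.\ the equivalence of $(N_1,\psi_1)$ and $(N_2,\psi_2)$. Pasting the five commuting squares yields the rectangle, completing the argument.

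I expect the main obstacle to be purely bookkeeping: correctly identifying the intermediate vertical maps and checking that each factors as the expected tensor product of $\phi$, $\chi$, and identities, so that the interchange law cleanly isolates exactly the two hypothesis equations. No new idea beyond naturality of $\alpha$ and the two given commuting diagrams is required.
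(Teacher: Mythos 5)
Your proposal is correct and follows the same route as the paper, which takes $\phi\otimes\xi$ as the witnessing isomorphism and simply asserts that the resulting rectangle commutes (``straightforward, albeit tedious''); you have supplied exactly the diagram chase the paper omits, with the right intermediate vertical maps and the right appeals to naturality of $\alpha$ and the interchange law.
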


\begin{proof}
Since $(M_1, \sigma_1)$ is equivalent to $(M_2, \sigma_2)$ and  $(N_1, \psi_1)$ is equivalent to $(N_2, \psi_2)$ there exist bimodule isomorphisms $\phi : M_1 \to M_2$ and $\xi : N_1 \to N_2$ making the following diagrams commute:
$$
\vcenter{
\xymatrix{
kE^1 \otimes_{ kE^0 } M_1 \ar[r]^-{\sigma_1} \ar[d]_{\mathrm{id}_{kE^1} \otimes \phi }& M_1 \otimes_{ kF^0 } kF^1 \ar[d]^{\phi \otimes \mathrm{id}_{kF^1} } \\
kE^1 \otimes_{ kE^0 } {M_2}  \ar[r]_-{\sigma_2} & {M_2} \otimes_{ kF^0 } kF^1
}
}
\qquad \text{ and } \qquad
\vcenter{
\xymatrix{
kF^1 \otimes_{ kF^0 } N_1 \ar[r]^-{\psi_1} \ar[d]_{\mathrm{id}_{kF^1} \otimes \xi }& N_1 \otimes_{ kG^0 } kG^1 \ar[d]^{\xi \otimes \mathrm{id}_{kG^1} } \\
kF^1 \otimes_{ kF^0 } {N_2}  \ar[r]_-{\psi_2} & {N_2} \otimes_{ kG^0 } kG^1.
}
}
$$
It is straightforward, albeit tedious, to show that these facts imply the diagram
$$
\xymatrix{
kE^1 \otimes_{ kE^0 } (M_1 \otimes_{kF^0} N_1) \ar[rr]^-{\sigma_1 \# \psi_1} \ar[d]_{\mathrm{id}_{kE^1} \otimes (\phi \otimes \xi) }& & (M_1 \otimes_{kF^0} N_1) \otimes_{ kG^0 } kG^1 \ar[d]^{(\phi \otimes \xi) \otimes \mathrm{id}_{kG^1} } \\
kE^1 \otimes_{ kE^0 } (M_2 \otimes_{kF^0} N_2)  \ar[rr]_-{\sigma_2 \# \psi_2} & & (M_2 \otimes_{ kF^0 } N_2) \otimes_{kG^0} kG^1
}
$$
commutes.  Thus  $(M_1 \otimes_{kF^0} N_1, \sigma_1 \# \psi_1)$ is equivalent to $(M_2 \otimes_{kF^0} N_2, \sigma_2 \# \psi_2)$.
\end{proof}

The following result will be used in  Remark \ref{categoricalperspectiveremark} to help provide a  categorical interpretation of the bridging bimodule.  

\begin{prp} \label{identity-conj-pair-prop}
Let $\operatorname{id}_E : E \to E$  be the specified conjugacy pair $(kE^0, \epsilon_E)$ described in Example \ref{identityconjpair}.   
\begin{itemize}
\item[(1)] Let $E$ and $F$ be finite graphs with no sinks.  If $(M, \sigma)$ is a specified conjugacy pair from $E$ to $F$, then $(M, \sigma) \circ \operatorname{id}_E$ is equivalent to $(M, \sigma)$ and $\operatorname{id}_F \circ (M, \sigma)$ is equivalent to $(M, \sigma)$.
\item[(2)] If $E$ is any finite graph with no sinks, the bridging bimodule $Y_{\operatorname{id}_E}$ is graded isomorphic to $L_k(E)$.
\end{itemize}
\end{prp}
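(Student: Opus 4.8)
The plan is to handle the two parts separately, in each case exhibiting an explicit candidate isomorphism and then verifying the required coherence.

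For part (1) I would first write out the two composite pairs using Definition~\ref{hash-def}. Reading $\circ$ as ordinary right-to-left composition, $(M,\sigma)\circ\operatorname{id}_E$ is the $\#$-composite with $\operatorname{id}_E=(kE^0,\epsilon_E)$ as the first pair and $(M,\sigma)$ as the second, so it equals $(kE^0\otimes_{kE^0}M,\ \epsilon_E\#\sigma)$; symmetrically, $\operatorname{id}_F\circ(M,\sigma)=(M\otimes_{kF^0}kF^0,\ \sigma\#\epsilon_F)$. The natural candidates for the equivalence isomorphisms of Definition~\ref{equivalent-conj-pairs-def} are the canonical unitor (multiplication) maps $\phi_\ell\colon kE^0\otimes_{kE^0}M\to M$, $a\otimes m\mapsto am$, and $\phi_r\colon M\otimes_{kF^0}kF^0\to M$, $m\otimes a\mapsto ma$, each a $kE^0$--$kF^0$-bimodule isomorphism. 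It then remains to check that each of these makes the square \eqref{eq-equivalent-conj-pairs-def} commute.

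The computational content of (1) is a trace of $\epsilon_E\#\sigma$ and $\sigma\#\epsilon_F$ through the chain of associativity maps in \eqref{hash-def-eq}, evaluated on elementary tensors. Setting $\sigma(ev\otimes m)=\sum_i m_i\otimes y_i$, one finds $(\epsilon_E\#\sigma)(e\otimes(v\otimes m))=\sum_i(s(e)\otimes m_i)\otimes y_i$, and, setting $\sigma(e\otimes m)=\sum_i m_i\otimes y_i$, one finds $(\sigma\#\epsilon_F)(e\otimes(m\otimes w))=\sum_i(m_i\otimes s(y_i))\otimes y_iw$. Commutativity of the first square then reduces to the balancing identity $e\otimes vm=ev\otimes m$ in $kE^1\otimes_{kE^0}M$ together with the left $kE^0$-linearity of $\sigma$, which (since $s(e)\cdot ev=ev$) yields $\sum_i s(e)m_i\otimes y_i=\sum_i m_i\otimes y_i$; commutativity of the second reduces to the $kF^0$-balancing $(m_is(y_i))\otimes y_iw=m_i\otimes y_iw$. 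Both are immediate once the traces are in place. (By Proposition~\ref{prop:MUF-iso}, this equivalence of pairs records that $\operatorname{id}_E$ and $\operatorname{id}_F$ act, up to graded isomorphism of bridging bimodules, as two-sided identities.)

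For part (2), Definition~\ref{bridging-bimodule-def} gives $Y_{\operatorname{id}_E}=kE^0\otimes_{kE^0}L_k(E)$, and the candidate isomorphism onto $L_k(E)$ is again the multiplication map $\Phi\colon a\otimes S\mapsto aS$. This is visibly a right $L_k(E)$-module isomorphism, and it is graded because $a\in kE^0=L_k(E)_0$ forces $aS$ to lie in the same component as $S$. The work is to show $\Phi$ intertwines the left $L_k(E)$-action of Theorem~\ref{thm:bimodule-action} (taken with $M=kE^0$ and $\sigma=\epsilon_E$, so that $F=E$) with left multiplication on $L_k(E)$. Since $\{v,e,e^*\}$ generates $L_k(E)$, I would check this on generators via formulas (1)--(3) of Theorem~\ref{thm:bimodule-action}: the $v$ and $e$ cases follow at once from $\epsilon_E(e\otimes v)=s(e)\otimes ev$ and relation (E1), while for $e^*$ I would record the explicit inverse $\epsilon_E^{-1}(v\otimes f)=\delta_{v,s(f)}\,f\otimes r(f)$, substitute into formula (3), and collapse the resulting double sum over $E^1$ using (CK1) $e^*f=\delta_{e,f}r(e)$ and (E2) $r(e)e^*=e^*$, obtaining $\Phi(e^*\cdot(v\otimes S))=(e^*v)S=e^*\cdot\Phi(v\otimes S)$.

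The main obstacle is precisely this $e^*$ verification in part (2) (and, in parallel, the $\sigma\#\epsilon_F$ trace in part (1)): one must keep careful track of where the associativity isomorphisms and the left/right unitors act, and the single delicate point is the collapse of the double sum in formula (3) via the Cuntz--Krieger relation, where a miscount would derail the argument. Everything else is a routine, if somewhat lengthy, unwinding of the definitions.
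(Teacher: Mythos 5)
Your proposal is correct and follows essentially the same route as the paper: the unitor maps $a\otimes m\mapsto am$ (resp.\ $m\otimes a\mapsto ma$) implement the equivalences in (1), and the multiplication map $w\otimes S\mapsto wS$ gives the graded bimodule isomorphism in (2), verified on the generators $v,e,e^*$ via the formulas of Theorem~\ref{thm:bimodule-action} and the collapse $e^*f=\delta_{e,f}r(e)$. You in fact supply more detail than the paper for part (1), where the paper merely asserts that the relevant square ``can easily be verified'' to commute; your traces of $\epsilon_E\#\sigma$ and $\sigma\#\epsilon_F$ and the resulting reductions to balancing and $kE^0$-linearity are exactly the omitted computation.
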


\begin{proof}
For (1) let $\phi: kE^0 \otimes_{kE^0} M \to M$ be the isomorphism with $\phi (v \otimes m) = vm$.  One can  easily verify that the diagram 
$$
\xymatrix{
kE^1 \otimes_{ kE^0 } (kE^0 \otimes_{kE^0} M) \ar[rr]^-{\epsilon_E \# \sigma} \ar[d]_{\mathrm{id}_{kE^1} \otimes \phi } & & (kE^0 \otimes_{kE^0} M) \otimes_{ kF^0 } kF^1 \ar[d]^{\phi \otimes \mathrm{id}_{kF^1} } \\
kE^1 \otimes_{ kE^0 } {M}  \ar[rr]_-{\sigma} & & {M} \otimes_{ kF^0 } kF^1
}
$$
commutes.  Thus $(kE^0 \otimes_{kE^0} M, \epsilon_E \# \sigma) = (M, \sigma) \circ \operatorname{id}_E$ is equivalent to $(M, \sigma)$.  A similar argument shows $\operatorname{id}_F \circ (M, \sigma)$ is equivalent to $(M, \sigma)$.

For (2), recall that $Y_{\operatorname{id}_E} := kE^0 \otimes_{kE^0}  L_k(E)$ with the natural right $L_k(E)$-action and with the left $L_k(E)$-action and $\mathbb{Z}$-grading determined by $\epsilon_E$  as described in Theorem~\ref{thm:bimodule-action}.  Define $\Psi : kE^0 \otimes_{kE^0} L_k(E) \to L_k(E)$ to be the unique linear map with 
$$\Psi ( w \otimes S) := wS \qquad  \text{for $w \in E^0$ and $S \in L_k(E)$.}$$
It is straightforward to show $\Psi$ is a $k$-linear isomorphism that preserves the right $L_k(E)$-action and the $\mathbb{Z}$-grading.  It remains to show that $\Psi$ also preserves the left $L_k(E)$-action.  To this end, let $v,w \in E^0$ and $S \in L_k(E)$.  Using the description of the left $L_k(E)$-action given in Theorem~\ref{thm:bimodule-action} we have $v \cdot (w \otimes S) := vw \otimes S$.  Thus
\begin{equation} \label{epsilon-one-eq}
\Psi ( v \cdot (w \otimes S)) = \Psi(vw \otimes S) = vwS = v \Psi(w \otimes S).
\end{equation}
In addition, if we let $e \in E^1$, $w \in E^0$, and $S \in L_k(E)$, then $e \otimes w \in kE^1 \otimes_{kE^0} kE^0$ is nonzero if and only if $w = r(e)$, in which case $\epsilon_E (e \otimes w) = \epsilon_E (e \otimes r(e)) = s(e) \otimes e$, and hence by Theorem~\ref{thm:bimodule-action} $e \cdot (w \otimes S) = s(e) \otimes eS$.  Thus in the nonzero case (i.e., when $w = r(e)$) we have
\begin{equation} \label{epsilon-two-eq}
\Psi ( e \cdot (w \otimes S)) = \Psi(s(e) \otimes eS) = s(e)eS = eS=er(e)S =ewS= e\Psi(w \otimes S).
\end{equation}
Finally, if we let $e \in E^1$, $w \in E^0$, and $S \in L_k(E)$, then for any $f \in E^1$ we have $w \otimes f$ is nonzero if and only if $w = s(f)$, in which case $\epsilon_E^{-1} (w \otimes f) = \epsilon_E^{-1} (s(f) \otimes f) = f \otimes r(f)$, and hence by Theorem~\ref{thm:bimodule-action} we have 
$$e^* \cdot (w \otimes S) = \sum_{f \in E^1} e^*f r(f) \otimes f^*S = e^*e \otimes e^*S.$$
Thus in the nonzero case (i.e., when $w = s(e)$) we have
\begin{equation} \label{epsilon-three-eq}
\Psi ( e^* \cdot (w \otimes S)) = \Psi( e^*e \otimes e^*S) = e^*ee^*S =e^* S =e^*s(e)S= e^*wS = e^* \Psi (w \otimes S).
\end{equation}
Since $\{v,e,e^* : v \in E^0, e \in E^1\}$ generates $L_k(E)$ and $\{ w : w \in E^0 \}$ generates $kE^0$, Equations \eqref{epsilon-one-eq}, \eqref{epsilon-two-eq}, and \eqref{epsilon-three-eq} imply $\Psi$ preserves the left $L_k(E)$-action.  Thus $\Psi$ is a graded $L_k(E)$--$L_k(E)$-bimodule isomorphism from $Y_{\operatorname{id}_E} := kE^0 \otimes L_k(E)$ onto $L_k(E)$.
\end{proof}

\begin{rmk}\label{categoricalperspectiveremark}
We conclude this section by pointing out a categorical perspective that allows us to view the bridging bimodule construction as a functor.  Let $\mathsf{Conj}$ denote the category whose objects are finite directed graphs with no sinks and whose morphisms are equivalence classes of specified conjugacy pairs (as defined in Definition~\ref{conjugacy-def} and Definition~\ref{equivalent-conj-pairs-def}), with composition of equivalence classes defined by setting 
$$ [(N, \psi)] \circ [(M, \sigma)] := [( M \otimes_{kF^0} N, \sigma \# \psi)]$$
for $[(M, \sigma)] : E \to F$ and $[(N, \psi)] : F \to G$.   The fact this composition is well defined is precisely the result of Proposition~\ref{hash-comp-well-defined-prop}.  In addition, one can verify that associativity of composition holds.  (Indeed, this is why we describe equivalence classes of specified conjugacy pairs, as associativity of $\#$ holds up to equivalence, but not necessarily up to equality.)  Furthermore, Proposition~\ref{identity-conj-pair-prop}(1) shows that for any directed graph $E$ the equivalence class of the specified conjugacy pair $\operatorname{id}_E := ( kE^0, \epsilon_E)$ is the identity morphism on $E$.

Let $\mathsf{Gr}$-$\mathsf{BiMod}$ denote the category whose objects are $\mathbb{Z}$-graded (unital) rings and whose morphisms are graded isomorphism classes of $\mathbb{Z}$-graded bimodules,  with composition given by tensor product.  If $M$ is a  $\mathbb{Z}$-graded $R$--$S$-bimodule and $N$ is a $\mathbb{Z}$-graded $S$--$T$-bimodule, the $\mathbb{Z}$-grading on the tensor product $M \otimes_S N$ is defined to have $\ell$th-component
$$(M \otimes_S N)_\ell := \Big\{ \sum_i m_i \otimes n_i   \ | \ \operatorname{deg}_M(m_i) + \operatorname{deg}_N(n_i) = \ell \Big\},$$
where $m_i \in M$ and $n_i \in N$ are homogeneous elements of $M$ and $N$, respectively.

For each field $k$ one may define a functor from $\mathsf{Conj}$ to $\mathsf{Gr}$-$\mathsf{BiMod}$ as follows.  On objects the functor takes a finite directed graph with no sinks $E$ to the Leavitt path algebra $L_k(E)$ viewed as a $\mathbb{Z}$-graded ring.  On morphisms the functor takes an equivalence class of a specified conjugacy pair $[(M, \sigma)]$ to the isomorphism class of the bridging bimodule $[Y_{(M, \sigma)}]$.  This assignment is well-defined by Proposition~\ref{prop:MUF-iso}.  In addition, Proposition~\ref{prp-griso-tensor} shows that the functor preserves composition; i.e., 
$$Y_{ (M \otimes_{kF^0} N, \sigma \# \psi)} \cong Y_{(M, \sigma)} \otimes_{L_k(F)} Y_{(N, \psi)}$$
is precisely the statement
$$ [ Y_{  (N, \psi) \circ (M, \sigma)  } ] = [Y_{ (N, \psi) } ] \circ [Y_{ (M, \sigma) } ].$$
Finally, Proposition~\ref{identity-conj-pair-prop}(2) shows that the functor preserves identity morphisms; i.e., $[Y_{\operatorname{id}_E} ] = [L_k(E)]$. 
\end{rmk}

\section{Conditions for shift equivalence to imply graded Morita equivalence} \label{Theorem-B-sec}

Prior to establishing Theorem~\ref{thm:main2}, we need the following key property of a specific bridging bimodule.   Let $E$ be a graph with no sinks and let $m\in \NN$.  We define 
$$\nu_m^E:  \  kE^1 \otimes_{kE^0} (kE^1)^{\otimes m} \to (kE^1)^{\otimes m} \otimes_{kE^0} kE^1, $$
$$x_1 \otimes (y_1 \otimes \cdots \otimes y_m) \mapsto (x_1 \otimes y_1 \otimes  \cdots \otimes y_{m-1} ) \otimes y_m.$$ 
The map $\nu_m^E$ is easily seen to be a $kE^0$--$kE^0$-bimodule isomorphism.    
Noting that $(kE^1)^{\otimes m}$ is a $kE^0$--$kE^0$-bimodule, Theorem~\ref{thm:bimodule-action} yields that the bridging bimodule 
 $$ Y_{ ((kE^1)^{\otimes m} , \nu_m^E)}   \ := \  (kE^1)^{\otimes m} \otimes_{kE^0} L_k(E)$$ 
is an $L_k(E)$--$L_k(E)$-bimodule.

\begin{lem}\label{lem:npowergraph2}
Let $E$ be a finite graph with no sinks and let $m\in \NN$.  Then
$$Y_{ ((kE^1)^{\otimes m} , \nu_m^E)} \cong L_k(E)$$ 
as $L_k(E)$--$L_k(E)$-bimodules.
\end{lem}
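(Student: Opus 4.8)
The plan is to show that the very isomorphism $\rho_{E,m}$ furnished by Proposition~\ref{lem:iso-YotimesOY} already serves as the desired $L_k(E)$--$L_k(E)$-bimodule isomorphism. Recall from Proposition~\ref{lem:iso-YotimesOY} and Notation~\ref{rho-E-not} that
$$\rho_{E,m} : (kE^1)^{\otimes m} \otimes_{kE^0} L_k(E) \to L_k(E), \qquad (x_1 \otimes \cdots \otimes x_m) \otimes S \mapsto x_1 \cdots x_m S,$$
is a $kE^0$--$L_k(E)$-bimodule isomorphism; in particular it is a $k$-linear bijection that is right $L_k(E)$-linear and left $kE^0$-linear. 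Since the underlying right module of $Y_{((kE^1)^{\otimes m},\nu_m^E)}$ is by Definition~\ref{bridging-bimodule-def} precisely $(kE^1)^{\otimes m} \otimes_{kE^0} L_k(E)$ with its natural right $L_k(E)$-action, all that remains is to verify that $\rho_{E,m}$ intertwines the left $L_k(E)$-action on the bridging bimodule (supplied by Theorem~\ref{thm:bimodule-action} applied to the conjugacy pair $((kE^1)^{\otimes m},\nu_m^E)$) with left multiplication on $L_k(E)$.

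Because $L_k(E)$ is generated as a $k$-algebra by $\{v, e, e^* : v \in E^0,\ e \in E^1\}$, it suffices to check this compatibility on these generators, and by $k$-linearity I would test it on elementary tensors $w \otimes S$ with $w = w_1 \otimes \cdots \otimes w_m$, $w_i \in E^1$, and $S \in L_k(E)$. For $v \in E^0$, Theorem~\ref{thm:bimodule-action}(1) gives $v \cdot (w \otimes S) = ((vw_1) \otimes w_2 \otimes \cdots \otimes w_m) \otimes S$, so applying $\rho_{E,m}$ and using associativity of multiplication yields $v(w_1 \cdots w_m S) = v \cdot \rho_{E,m}(w \otimes S)$. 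For $e \in E^1$ one computes the single-term value $\nu_m^E(e \otimes w) = (e \otimes w_1 \otimes \cdots \otimes w_{m-1}) \otimes w_m$, so Theorem~\ref{thm:bimodule-action}(2) gives $e \cdot (w \otimes S) = (e \otimes w_1 \otimes \cdots \otimes w_{m-1}) \otimes (w_m S)$; applying $\rho_{E,m}$ produces $e w_1 \cdots w_m S = e \cdot \rho_{E,m}(w \otimes S)$.

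The substantive case is that of $e^*$. Here I would first record that for each $f \in E^1$ the inverse satisfies $(\nu_m^E)^{-1}(w \otimes f) = w_1 \otimes (w_2 \otimes \cdots \otimes w_m \otimes f)$, again a single term. Feeding this into Theorem~\ref{thm:bimodule-action}(3) gives
$$e^* \cdot (w \otimes S) = \sum_{f \in E^1} \big(((e^* w_1) w_2) \otimes w_3 \otimes \cdots \otimes w_m \otimes f\big) \otimes f^* S,$$
where the scalar $e^* w_1 \in kE^0$ has been absorbed into the leading tensor factor. Applying $\rho_{E,m}$ and factoring out the part independent of $f$ yields $(e^* w_1) w_2 \cdots w_m \big(\sum_{f \in E^1} f f^*\big) S$. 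The crux is then that $\sum_{f \in E^1} f f^* = 1_{L_k(E)}$ by Lemma~\ref{nosinkslemma}(1) --- this is exactly where the no-sinks hypothesis is used --- so the sum collapses to $e^* w_1 \cdots w_m S = e^* \cdot \rho_{E,m}(w \otimes S)$, as required. Having matched all three families of generators, $\rho_{E,m}$ respects the left action, and hence is an $L_k(E)$--$L_k(E)$-bimodule isomorphism.

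I expect the $e^*$ computation to be the main obstacle: it requires correctly inverting $\nu_m^E$, carefully tracking the placement of the scalar $e^* w_1$ in the leading tensor slot, and recognizing the collapse $\sum_{f} f f^* = 1_{L_k(E)}$ that reconciles the two sides. The $v$ and $e$ cases, by contrast, are immediate once the single-term values of $\nu_m^E$ are written down. (I note that $\rho_{E,m}$ shifts the $\mathbb{Z}$-grading by $m$, so the statement is asserted only at the level of bimodules, not graded bimodules.)
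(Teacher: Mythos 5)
Your proposal is correct and follows essentially the same route as the paper: both take the isomorphism $\rho_{E,m}$ of Proposition~\ref{lem:iso-YotimesOY} and verify left $L_k(E)$-linearity on the generators $v$, $e$, $e^*$, with the $e^*$ case resolved by the collapse $\sum_{f\in E^1} ff^* = 1_{L_k(E)}$ from Lemma~\ref{nosinkslemma}. Your closing remark about the grading shift likewise matches the paper's Remark~\ref{nu1Eremark}.
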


\begin{proof}
Let $$\rho_{E, m} \colon Y_{ ((kE^1)^{\otimes m} , \nu_m^E)} \to L_k(E)$$ be the $kE^0$ -- $L_k(E)$-bimodule isomorphism given in  Proposition~\ref{lem:iso-YotimesOY}.  It suffices to prove that $\rho_{E,m}$ preserves the left $L_k(E)$-module structure.  Using the left $L_k(E)$-module structure on $Y_{ ((kE^1)^{\otimes m} , \nu_m^E)}$ described in Theorem~\ref{thm:bimodule-action}, for any $v \in E^0$ and $e \in E^1$ we have
\begin{align*}
    \rho_{E, m} ( v \cdot ( x_1 \otimes \cdots \otimes x_m) \otimes S ) &= \rho_{E,m} ( (vx_1) \otimes x_2 \otimes \cdots \otimes x_m \otimes S )= (vx_1) x_2 \cdots x_m S \\
    &= v( x_1 \cdots x_m S)
     = v \cdot \rho_{E, m} ( x_1 \otimes \cdots \otimes x_m \otimes S ), \\
    \rho_{E,m} ( e \cdot ( x_1 \otimes \cdots \otimes x_m )\otimes S)  &= \rho_{E,m} ( (e \otimes x_1 \otimes \cdots \otimes x_{m-1}) \otimes x_mS) 
                    \\
                    &= e (x_1 \cdots x_m S)   = e\cdot  \rho_{E,m} ( x_1 \otimes \cdots \otimes x_m \otimes S), \mbox{ and} \\
    \rho_{E,m}( e^*\cdot  ( x_1 \otimes \cdots \otimes x_m) \otimes S) &= \rho_{E, m} \left( e^*\cdot  (( x_1 \otimes \cdots \otimes x_m) \otimes  \sum_{f \in E^1}   ff^*S) \right) \\
     &= \rho_{E, m} \left( \sum_{f \in E^1}  e^*\cdot  (( x_1 \otimes \cdots \otimes x_m) \otimes ff^*S) \right) \\
    &= \sum_{ f \in E^1} \rho_{E, m} (  (e^*x_1)(x_2 \otimes \cdots \otimes x_{m} \otimes f ) \otimes f^*S ) \\
    &= \sum_{ f \in E^1}  e^*x_1 \rho_{E, m}( (x_2 \otimes \cdots \otimes x_{m} \otimes f ) \otimes f^*S) \\
    &= \sum_{ f \in E^1}  e^*x_1 x_2 \cdots x_m ff^*S  \\
    &= e^*x_1 x_2 \cdots x_m   \sum_{ f \in E^1}   ff^*S  \\
    &= e^*x_1 x_2 \cdots x_m S \\
    &= e^* \cdot \rho_{E,m}( x_1 \otimes \cdots \otimes x_m \otimes S).
\end{align*}
Thus $\rho_{E,m}$ preserves the left $L_k(E)$-module structure, and thus $\rho_{E,m}$ is an $L_k(E)$--$L_k(E)$-bimodule isomorphism.
\end{proof}

\begin{rmk}\label{nu1Eremark}
We note that the $L_k(E)$--$L_k(E)$-bimodule isomorphism $\rho_{E,m}$ of Lemma \ref{lem:npowergraph2} is not a $\mathbb{Z}$-graded isomorphism.  However, $\rho_{E,m}$ does give a graded isomorphism between $Y_{ ((kE^1)^{\otimes m} , \nu_m^E)} $ and the $m$-suspension bimodule $L_k(E)(m)$ described in the following subsection. 

Additionally, we highlight the fact  that $\nu_1^E: kE^1 \otimes kE^1 \to kE^1 \otimes kE^1$ is the identity map ${\rm id}_{kE^1 \otimes kE^1}$.  This trivial observation turns out to  play a key role in the proof of Proposition~\ref{SE-implies-COM-prop} below.  
\end{rmk}

\subsection{Graded Morita equivalence, and our second main result}\label{subsectiongMe}
We are now in position to prove  Theorem~\ref{thm:main2}, which establishes that if $E$ and $F$ are finite graphs with no sinks, the existence of a certain pair of commutative diagrams involving the  maps $\nu_m^E$ and $\nu_n^F$ implies that the Leavitt path algebras $L_k(E)$ and $L_k(F)$ are graded Morita equivalent.

In particular, this gives the following.  Suppose the adjacency matrices of $E$ and $F$ are shift equivalent, which by Theorem~\ref{thm:shift-equivalence-module} is equivalent to the existence of four associated  bimodule isomorphisms.  If these four bimodule isomorphisms can in addition be chosen to satisfy the two commutative diagrams presented in the statement of  Theorem~\ref{thm:main2}, then $L_k(E)$ and $L_k(F)$ are graded Morita equivalent.  In other words, shift equivalence of the adjacency matrices of $E$ and $F$, when accompanied by an appropriate commutativity condition on associated isomorphisms, implies  $L_k(E)$ and $L_k(F)$ are graded Morita equivalent. 

Before stating and proving Theorem~\ref{thm:main2} we recall the key ideas regarding  {\it graded Morita equivalence} for rings.  (See \cite[\S2.3]{RoozbehBook} for additional information.)  While the notation we use here is standard in the literature, it can seem somewhat ill-chosen on first encounter.  

For a ring $R$, $\mathsf{Mod}$-$R$ denotes the category of right $R$-modules.  For an abelian group $(\mathcal{G},+)$, and a $\mathcal{G}$-graded ring $R$,  $\mathsf{Gr}$-$R$ denotes the category whose objects are $\mathcal{G}$-graded right $R$-modules, and morphisms are $\mathcal{G}$-graded $R$-homomorphisms.   We denote the $\mathcal{G}$-decomposition of $M$ by $\oplus_{g\in \mathcal{G}}M_g$;  if $0 \neq m\in M$ has $m\in M_h$ for some $h\in \mathcal{G}$, we say $m$ is {\it homogeneous}, and write ${\rm deg}_M(m) = h$.   

For an abelian group $\mathcal{G}$, a $\mathcal{G}$-graded ring $R$, a $\mathcal{G}$-graded right $R$-module $M$, and $\gamma \in \mathcal{G}$, the $\gamma${\it -suspension} of $M$, denoted $M(\gamma)$, is defined to be the $\mathcal{G}$-graded right $R$-module having $M(\gamma) = M$ as right $R$-modules, but with $\mathcal{G}$-grading defined by setting
$$M(\gamma)_g = M_{\gamma +g}$$
for  each $g\in \mathcal{G}$.   So for each homogeneous element $0\neq m\in M$,  ${\rm deg}_{M(\gamma)}(m) = {\rm deg}_M(m)~-~\gamma$.       
For $\gamma \in \mathcal{G}$, the functor 
$$\Psi_\gamma: \mathsf{Gr}\mbox{-}R \to \mathsf{Gr}\mbox{-}R$$
 is defined by setting $\Psi_\gamma (M) = M(\gamma)$ on objects (and the identity on morphisms).  
 
 For any $\mathcal{G}$-graded ring $R$, the {\it forgetful  functor} 
$$U_R:    \mathsf{Gr}\mbox{-}R \to \mathsf{Mod}\mbox{-}R$$  is the identity on both objects and morphisms, but views each as their ungraded counterpart.

 \begin{dfn}\label{gradedfunctordef}
   Let $R$ and $S$ be  $\mathcal{G}$-graded rings.  
$$\mbox{A functor } \Phi: \mathsf{Gr}\mbox{-}R \to \mathsf{Gr}\mbox{-}S \mbox{ is called a  {\it  graded functor}
 }$$
in case $ \Psi_\gamma( \Phi (M)) = \Phi ( \Psi_\gamma (M))$ as $\mathcal{G}$-graded right $R$-modules, for each $\mathcal{G}$-graded right $R$-module $M$ and each $\gamma \in \mathcal{G}$.   
$$\mbox{A functor } \varphi: \mathsf{Mod}\mbox{-}R \to \mathsf{Mod}\mbox{-}S \mbox{ is called a  {\it  graded functor}
 }$$
 in case there is a graded functor $\Phi: \mathsf{Gr}$-$R$ $\to$ $\mathsf{Gr}$-$S$ for which $U_S \circ \Phi = \varphi \circ U_R$ as functors from $\mathsf{Gr}$-$R$ to $\mathsf{Mod}\mbox{-}S$. 
\end{dfn}

\noindent 
We emphasize that the phrase ``graded functor" is used in two different contexts in the previous definition.  

\begin{exm}\label{tensorfunctorexample}
Let $R$ and $S$ be $\mathcal{G}$-graded rings, and let ${}_RX_S$ be a $\mathcal{G}$-graded $R$--$S$-bimodule.     We define the functor 
$\Phi_X : \mathsf{Gr}$-$R$ $\to$ $\mathsf{Gr}$-$S$  by setting 
$$\Phi_X(M_R) = M \otimes_R X$$
for each $\mathcal{G}$-graded right $R$-module $M$, and setting $\Phi_X(f) = f \otimes {\rm id}_X$ for each morphism $f: M \to N$ in  $\mathsf{Gr}$-$R$.   

Then $\Phi$ is a graded functor from $\mathsf{Gr}$-$R$ $\to$ $\mathsf{Gr}$-$S$.  To verify this, we check that  $ \Psi_\gamma( \Phi_X (M)) = \Phi_X ( \Psi_\gamma (M))$ as $\mathcal{G}$-graded right $R$-modules; i.e., that $(M \otimes_R X)(\gamma) = M \otimes_R (X(\gamma))$ as $\mathcal{G}$-graded modules for all $\gamma \in \mathcal{G}$.   Recalling the definition of the grading on tensor products given in  Remark \ref{categoricalperspectiveremark}  and  the $\mathcal{G}$-grading on the $\gamma$-suspension given directly above,   we have that for each $\ell \in \mathcal{G}$, $$((M\otimes_R X)(\gamma))_\ell = \{ \sum m_i \otimes x_i \ | \ {\rm deg}_M(m_i) + {\rm deg}_X(x_i) = \ell + \gamma\}.$$  On the other hand,  
\begin{eqnarray*} (M \otimes_R (X(\gamma)))_\ell & = &  \{ \sum m_i \otimes x_i \ | \ {\rm deg}_M(m_i) + {\rm deg}_{X(\gamma)}(x_i) =  \ell\}  \\ 
& = & \{ \sum m_i \otimes x_i \ | \ {\rm deg}_M(m_i) + ( {\rm deg}_{X}(x_i) - \gamma ) = \ell\},
\end{eqnarray*}
which then clearly equals $((M\otimes X)(\gamma))_\ell$ by the first display.

We now define the (different, but obviously related) functor $\varphi_X: \mathsf{Mod}$-$R$ $\to$ $\mathsf{Mod}$-$S$  by setting $$\varphi_X(M_R) = M \otimes_R X$$
for each right $R$-module $M$, and setting $\varphi_X(f) = f \otimes {\rm id}_X$ for each morphism $f: M \to N$ in  $\mathsf{Mod}$-$R$.   

Then $\varphi_X: \mathsf{Mod}$-$R$ $\to$ $\mathsf{Mod}$-$S$ is a graded functor from $\mathsf{Mod}$-$R$ to $\mathsf{Mod}$-$S$, because clearly the previously described graded functor $\Phi_X: $    $\mathsf{Gr}$-$R$ $\to$ $\mathsf{Gr}$-$S$ satisfies  $U_S \circ \Phi_X = \varphi_X \circ U_R$ as functors from  $\mathsf{Gr}$-$R$ $\to$ $\mathsf{Mod}$-$S$.  

\end{exm}

\begin{dfn} Let $R$ and $S$ be $\mathcal{G}$-graded rings.
  $$\mbox{A graded functor } \Phi: \mathsf{Gr}\mbox{-}R \to \mathsf{Gr}\mbox{-}S \mbox{ is called a {\it graded equivalence}
  }$$
 in case there is a graded functor $\Gamma: \mathsf{Gr}$-$S$ $\to$ $\mathsf{Gr}$-$R$ such that $\Gamma\circ \Phi$ and $\Phi\circ \Gamma$ are naturally isomorphic to the identity functors on $\mathsf{Gr}$-$R$ and $\mathsf{Gr}$-$S$, respectively.  
$$\mbox{A graded functor } \varphi: \mathsf{Mod}\mbox{-}R \to \mathsf{Mod}\mbox{-}S \mbox{ is called a {\it graded equivalence}
  }$$
 in case  $\varphi$ is an equivalence from $\mathsf{Mod}\mbox{-}R \to \mathsf{Mod}\mbox{-}S$ in the usual sense;  that is, in case  there is a (not necessarily graded) functor $\tau: \mathsf{Mod}$-$S$ $\to$ $\mathsf{Mod}$-$R$ such that $\tau \circ \varphi$ and $\varphi \circ \tau$ are naturally isomorphic to the identity functors on $\mathsf{Mod}$-$R$ and $\mathsf{Mod}$-$S$, respectively.  
\end{dfn}

An obvious upshot of the previous two definitions is that there are two settings in which the phrase {\it graded equivalence} is used:   one  as a condition on a functor  from $\mathsf{Gr}$-$R$ to $\mathsf{Gr}$-$S$,   the other as a condition on a functor  from $\mathsf{Mod}$-$R$ to   $\mathsf{Mod}$-$S$.   By a powerful (perhaps surprising) theorem of Hazrat, there is no  ambiguity in doing so.   

{\bf Graded Morita Equivalence Theorem.}  (\cite[(1)$\Leftrightarrow$(2) of Theorem~2.3.8]{RoozbehBook})   For $\mathcal{G}$-graded rings $R$ and $S$, there exists a graded equivalence from   $\mathsf{Gr}\mbox{-}R \mbox{ to } \mathsf{Gr}\mbox{-}S$ if and only if there exists a graded  equivalence from  $\mathsf{Mod}\mbox{-}R \mbox{ to } \mathsf{Mod}\mbox{-}S$.

\smallskip

The Graded Morita Equivalence Theorem allows for the following.

\begin{dfn}\label{gradedequivalentdef}
Let $R$ and $S$ be $\mathcal{G}$-graded rings.  We say that $R$ and $S$ are  {\it graded Morita equivalent} (or more succinctly  {\it graded equivalent}) in case either one of these two equivalent conditions holds:

(geMod)  \ \  there exists a graded equivalence from $\mathsf{Mod}\mbox{-}R$ to $ \mathsf{Mod}\mbox{-}S$, or

(geGr)  \ \ \  \    there exists a graded equivalence from $\mathsf{Gr}\mbox{-}R$ to $ \mathsf{Gr}\mbox{-}S$.

\end{dfn}

\smallskip

For the statement of Theorem~\ref{thm:main2}, we remind the reader of the description of the $\#$ operation given in Definition~\ref{hash-def}.  Let $(M, \sigma)$ be a specified conjugacy pair from $E$ to $F$, and let $(N, \psi)$ be a specified conjugacy pair from $F$ to $G$.  We define the $kE^0$--$kG^0$-bimodule isomorphism $\sigma \# \psi : kE^1 \otimes_{kE^0} (M \otimes_{kF^0} N) \to (M \otimes_{kF^0} N) \otimes_{kG^0} kG^1$ by setting
$$
\sigma \# \psi := 
\alpha_{M, N, kG^1}^{-1}
\circ
(\operatorname{id}_{M} \otimes \psi)
\circ
\alpha_{M, kF^1, N}
\circ
(\sigma \otimes \operatorname{id}_{N})
\circ
\alpha_{kE^1, M, N}^{-1}.
$$

\begin{thm}[Sufficient conditions for shift equivalence to imply graded Morita equivalence]\label{thm:main2}
Let $k$ be any field, and let $E$ and $F$ be finite graphs with no sinks.  Suppose there exists a $kE^0$--$kF^0$-bimodule $M$, a $kF^0$--$kE^0$-bimodule $N$, and a positive integer $n$ for which there exist bimodule isomorphisms 
\begin{align*}
\omega_E \colon M \otimes_{kF^0} N &\to {(kE^1})^{\otimes n} & \omega_{F} \colon N \otimes_{kE^0} M &\to ({kF^1})^{\otimes n} \\
\sigma_M \colon {kE^1} \otimes_{kE^0} M &\to M \otimes_{kF^0} {kF^1} & \text{  } \qquad  \sigma_N \colon {kF^1} \otimes_{kF^0} N &\to N \otimes_{kE^0} {kE^1}
\end{align*}
such that  the diagrams
$$
\xymatrix{
kE^1 \otimes_{kE^0} (M \otimes_{kF^0} N) \ar[rr]^{\sigma_M \# \sigma_N}  \ar[d]_{\operatorname{id} \otimes \omega_E} & & (M \otimes_{kF^0} N) \otimes_{kE^0} kE^1 \ar[d]^{ {\omega_E \otimes \operatorname{id}} }\\
kE^1 \otimes_{kE^0} (kE^1)^{\otimes n} \ar[rr]^{\nu_n^E} & &  (kE^1)^{\otimes n} \otimes_{kE^0} kE^1
}
$$
and
$$
\xymatrix{
kF^1 \otimes_{kF^0} (N \otimes_{kE^0} M) \ar[rr]^{\sigma_N \# \sigma_M}  \ar[d]_{\operatorname{id} \otimes \omega_F} & & (N \otimes_{kE^0} M) \otimes_{kF^0} kF^1 \ar[d]^{ {\omega_F \otimes \operatorname{id}} }\\
kF^1 \otimes_{kF^0} (kF^1)^{\otimes n} \ar[rr]^{\nu_n^F} & &  (kF^1)^{\otimes n} \otimes_{kF^0} kF^1
}
$$
commute.     Then $L_k(E)$ and $L_k(F)$ are graded Morita equivalent.  
\end{thm}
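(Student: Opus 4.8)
The plan is to realize the desired graded equivalence through the tensor-product functors induced by the two bridging bimodules, and to use the hypothesized commutative diagrams to identify the relevant composite functors with suspension functors, which are manifestly auto-equivalences.

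First I would form the bridging bimodules $Y_{(M,\sigma_M)} = M \otimes_{kF^0} L_k(F)$ and $Y_{(N,\sigma_N)} = N \otimes_{kE^0} L_k(E)$ via Theorem~\ref{thm:bimodule-action}; here $(M,\sigma_M)$ is a specified conjugacy pair from $E$ to $F$ and $(N,\sigma_N)$ one from $F$ to $E$, so that $Y_{(M,\sigma_M)}$ is a $\mathbb{Z}$-graded $L_k(E)$--$L_k(F)$-bimodule and $Y_{(N,\sigma_N)}$ a $\mathbb{Z}$-graded $L_k(F)$--$L_k(E)$-bimodule. Following Example~\ref{tensorfunctorexample}, these bimodules induce graded functors
\[
\Phi := \Phi_{Y_{(M,\sigma_M)}} : \mathsf{Gr}\mbox{-}L_k(E) \to \mathsf{Gr}\mbox{-}L_k(F), \qquad \Gamma := \Phi_{Y_{(N,\sigma_N)}} : \mathsf{Gr}\mbox{-}L_k(F) \to \mathsf{Gr}\mbox{-}L_k(E),
\]
and my goal is to show that $\Phi$ is a graded equivalence.

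The key step is to compute the two composites $\Gamma \circ \Phi$ and $\Phi \circ \Gamma$. A graded bimodule isomorphism $X \cong X'$ induces a natural isomorphism $\Phi_X \cong \Phi_{X'}$, and, by associativity of the tensor product, composition of tensor functors corresponds to tensoring the bimodules; hence $\Gamma \circ \Phi \cong \Phi_{Y_{(M,\sigma_M)} \otimes_{L_k(F)} Y_{(N,\sigma_N)}}$. Now I would observe that the first hypothesized diagram is precisely the assertion, in the sense of Definition~\ref{equivalent-conj-pairs-def} (with implementing isomorphism $\omega_E$), that the specified conjugacy pair $(M \otimes_{kF^0} N, \sigma_M \# \sigma_N)$ is equivalent to $((kE^1)^{\otimes n}, \nu_n^E)$. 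Chaining Proposition~\ref{prp-griso-tensor}, Proposition~\ref{prop:MUF-iso}, and Lemma~\ref{lem:npowergraph2} together with Remark~\ref{nu1Eremark} then yields graded $L_k(E)$--$L_k(E)$-bimodule isomorphisms
\[
Y_{(M,\sigma_M)} \otimes_{L_k(F)} Y_{(N,\sigma_N)} \cong Y_{(M \otimes_{kF^0} N,\, \sigma_M \# \sigma_N)} \cong Y_{((kE^1)^{\otimes n},\, \nu_n^E)} \cong L_k(E)(n).
\]
Since $M' \otimes_{L_k(E)} L_k(E)(n) \cong M'(n)$ naturally as graded modules, this gives $\Gamma \circ \Phi \cong \Psi_n$, the $n$-suspension functor on $\mathsf{Gr}\mbox{-}L_k(E)$. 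The second diagram, by the identical argument with the roles of $E$ and $F$ interchanged and using $\omega_F$, yields $\Phi \circ \Gamma \cong \Psi_n$ on $\mathsf{Gr}\mbox{-}L_k(F)$.

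Finally I would conclude as follows. Each suspension functor $\Psi_n$ is an auto-equivalence with inverse $\Psi_{-n}$, and, since graded functors commute with suspensions (Definition~\ref{gradedfunctordef}), the functor $\Gamma' := \Psi_{-n} \circ \Gamma$ is a graded functor satisfying $\Gamma' \circ \Phi \cong \Psi_{-n} \circ \Psi_n = \mathrm{id}$ and $\Phi \circ \Gamma' = \Phi \circ \Psi_{-n} \circ \Gamma = \Psi_{-n} \circ \Phi \circ \Gamma \cong \Psi_{-n} \circ \Psi_n = \mathrm{id}$. Thus $\Phi$ is a graded equivalence from $\mathsf{Gr}\mbox{-}L_k(E)$ to $\mathsf{Gr}\mbox{-}L_k(F)$, so by condition $\mathrm{(geGr)}$ of Definition~\ref{gradedequivalentdef} the rings $L_k(E)$ and $L_k(F)$ are graded Morita equivalent. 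The main obstacle — and the reason for the preceding four sections — is establishing the graded bimodule isomorphisms feeding into the displayed chain: converting the commutativity hypothesis into an equivalence of specified conjugacy pairs, verifying that tensoring bridging bimodules computes the $\#$-composition at the level of bimodules (Proposition~\ref{prp-griso-tensor}), and tracking the grading carefully, which is what forces the appearance of the suspension $L_k(E)(n)$ in place of $L_k(E)$ itself and hence makes the invertibility of $\Psi_n$ essential.
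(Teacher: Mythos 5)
Your proof is correct, and while it follows the paper's core strategy --- tensoring with the bridging bimodule $Y_{(M,\sigma_M)}$ and using the hypothesized commutative diagrams, via Proposition~\ref{prop:MUF-iso}, Proposition~\ref{prp-griso-tensor}, and Lemma~\ref{lem:npowergraph2}, to identify $Y_{(M,\sigma_M)} \otimes_{L_k(F)} Y_{(N,\sigma_N)}$ with (a suspension of) $L_k(E)$ --- your concluding step is genuinely different from the paper's. The paper works in the \emph{ungraded} categories $\mathsf{Mod}$-$L_k(E)$ and $\mathsf{Mod}$-$L_k(F)$: it needs only the ungraded bimodule isomorphisms $Y_{(M,\sigma_M)} \otimes_{L_k(F)} Y_{(N,\sigma_N)} \cong L_k(E)$ and $Y_{(N,\sigma_N)} \otimes_{L_k(E)} Y_{(M,\sigma_M)} \cong L_k(F)$ from Lemma~\ref{lem:npowergraph2}, invokes the classical Morita theorem (Morita III) to conclude that $\varphi = \underline{\hspace{.25in}} \otimes_{L_k(E)} Y_{(M,\sigma_M)}$ is an ungraded category equivalence, checks separately that $\varphi$ is a graded functor, and then appeals to condition (geMod) of Definition~\ref{gradedequivalentdef} (whose legitimacy rests on Hazrat's nontrivial Graded Morita Equivalence Theorem). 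You instead work entirely inside $\mathsf{Gr}$-$L_k(E)$ and $\mathsf{Gr}$-$L_k(F)$, which forces you to use the \emph{graded} refinement $Y_{((kE^1)^{\otimes n},\nu_n^E)} \cong L_k(E)(n)$ --- a fact the paper only asserts in Remark~\ref{nu1Eremark} without proof, though it follows immediately from checking that $\rho_{E,n}$ raises degrees by exactly $n$ --- and then to neutralize the resulting suspension by exhibiting the explicit quasi-inverse $\Gamma' = \Psi_{-n}\circ\Gamma$, using the commutation of graded functors with suspensions. What each approach buys: the paper's route outsources the hard work to established machinery and never has to track the degree shift; yours is more self-contained (no Morita III, no reliance on the equivalence of (geMod) and (geGr)) and makes visible exactly where and why the $n$-suspension appears, at the cost of carrying the grading through every isomorphism in the chain. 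Both are valid proofs of the theorem.
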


\begin{proof}
We use the $L_k(E) - L_k(F)$ bridging bimodule $Y_{(M, \sigma_M)}$ to construct the functor  
$$\varphi: \mathsf{Mod}\mbox{-}L_k(E) \to \mathsf{Mod}\mbox{-}L_k(F)  \ \ \mbox{ given by }    \ \ \varphi := \underline{\hspace{.25in}}  \otimes_{L_k(E)} Y_{(M, \sigma_M)}  .$$

\noindent
Because $Y_{(M, \sigma_M)}$ is a graded $L_k(E)$--$L_k(F)$-bimodule, Example \ref{tensorfunctorexample} yields that 
$$\varphi :=   \underline{\hspace{.25in}} \otimes_{L_k(E)} Y_{(M, \sigma_M)} :  \mathsf{Mod}\mbox{-}L_k(E) \to  \mathsf{Mod}\mbox{-}L_k(F)\ \   \mbox{ is  a graded functor}.  $$ 
\noindent
The existence of the commutative diagrams of the hypotheses is precisely  what it means for  the specified conjugacy pairs $(M \otimes_{kF^0} N, \sigma_M \# \sigma_N)$ and $((kE^1)^{\otimes n}, \nu_n^E)$ to be equivalent,  and also for the specified conjugacy pairs $(N \otimes_{kE^0} M, \sigma_N \# \sigma_M)$ and $((kF^1)^{\otimes n}, \nu_n^F)$ to be equivalent (see Definition~\ref{equivalent-conj-pairs-def}).  By Proposition~\ref{prop:MUF-iso}, equivalent specified conjugacy pairs have (graded) isomorphic bridging bimodules, and hence
$$ Y_{ ((kE^1)^{\otimes n}, \nu_n^E) } \cong Y_{ (M \otimes_{kF^0} N, \sigma_M \# \sigma_N) }
\qquad \text{ and } \qquad
Y_{((kF^1)^{\otimes n}, \nu_n^F)} \cong Y_{ (N \otimes_{kE^0} M, \sigma_N \# \sigma_M) }
$$
as (graded) $L_k(E)$--$L_k(E)$-bimodules and (graded) $L_k(F)$--$L_k(F)$-bimodules, respectively.  Applying Proposition~\ref{prp-griso-tensor} we conclude
$$ Y_{ ((kE^1)^{\otimes n}, \nu_n^E) } \cong Y_{ (M, \sigma_M) } \otimes_{L_k(F)} Y_{ (N, \sigma_N) }
\quad \text{ and } \quad
Y_{((kF^1)^{\otimes n}, \nu_n^F)} \cong Y_{ (N, \sigma_N) }
\otimes_{L_k(E)}
Y_{ (M, \sigma_M) }
$$
as (graded) $L_k(E)$--$L_k(E)$-bimodules and (graded) $L_k(F)$--$L_k(F)$-bimodules, respectively.  By Lemma~\ref{lem:npowergraph2} we have $Y_{ ((kE^1)^{\otimes n}, \nu_n^E) } \cong L_k(E)$ as  $L_k(E)$--$L_k(E)$-bimodules and that $Y_{((kF^1)^{\otimes n}, \nu_n^F)} \cong L_k(F)$ as  $L_k(F)$--$L_k(F)$-bimodules.  Thus
$$ L_k(E) \cong Y_{ (M, \sigma_M) } \otimes_{L_k(F)} Y_{ (N, \sigma_N) }
\quad \text{ and } \quad
L_k(F) \cong Y_{ (N, \sigma_N) }
\otimes_{L_k(E)}
Y_{ (M, \sigma_M) }
$$
as  $L_k(E)$--$L_k(E)$-bimodules and  $L_k(F)$--$L_k(F)$-bimodules, respectively.   But it is well known  (see e.g. \cite[\S  3.15, Theorem Morita III]{Jacobson}) that  this implies 
$$\varphi:=  \underline{\hspace{.25in}} \otimes_{L_k(E)} Y_{(M, \sigma_M)} : \mathsf{Mod}\mbox{-}L_k(E) \to  \mathsf{Mod}\mbox{-}L_k(F)  \ \   \mbox{ is a category equivalence.} $$

\noindent
The two displayed properties of $\varphi$ thereby yield that   $L_k(E)$ and $L_k(F)$ are graded Morita equivalent (see Definition \ref{gradedequivalentdef} (geMod)), as desired.  
\end{proof}

\subsection{The Relationship with Strong Shift Equivalence}\label{SubsectionSSE}

In this subsection we show that if two finite graphs with no sinks have adjacency matrices that are strong shift equivalent, then the hypotheses of Theorem~\ref{thm:main2} are satisfied, and hence the associated Leavitt path algebras are graded Morita equivalent.  We begin by recalling the definition of strong shift equivalence.

\begin{dfn}
Let $A$ and $B$ be square matrices (of not necessarily the same size) with entries in $\mathbb{Z}_{\geq 0}$.  We say that $A$ and $B$ are \emph{elementary strong shift equivalent} if there exist rectangular matrices $R$ and $S$ such that $A = RS$ and $B = SR$.  We define \emph{strong shift equivalence} to be the transitive closure of this relation; that is, $A$ and $B$ are strong shift equivalent if there exists a sequence $A_1, \ldots A_N$ of square matrices with entries in $\mathbb{Z}_{\geq 0}$ having $A_1 = A$, $A_N = B$, and $A_i$ is elementary strong shift equivalent to $A_{i+1}$ for $1 \leq i \leq N-1$.  Note $A$ and $B$ are shift equivalent with exponent $n=1$ if and only if $A$ and $B$ are elementary strong shift equivalent.  Furthermore, since shift equivalence is an equivalence relation, strong shift equivalence implies shift equivalence.  However, it is a famous result that the converse does not hold; i.e., there are matrices that are shift equivalent but not strong shift equivalent.
\end{dfn}

\begin{prp}[Strong Shift Equivalence implies the Hypotheses of Theorem~\ref{thm:main2}] \label{SE-implies-COM-prop}
Let $E$ and $F$ be finite graphs with no sinks.  If the adjacency matrices of $E$ and $F$ are strong shift equivalent, then the hypotheses of Theorem~\ref{thm:main2} are satisfied.  This implies, in particular, that $L_k(E)$ and $L_k(F)$ are graded Morita equivalent.
\end{prp}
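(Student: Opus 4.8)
The plan is to reduce to a single elementary strong shift equivalence and, there, to produce directly the data required by Theorem~\ref{thm:main2} with exponent $n=1$. Since strong shift equivalence is by definition the transitive closure of elementary strong shift equivalence, and since graded Morita equivalence is transitive (a composite of graded equivalences is again a graded equivalence), it suffices to treat the case $A_E = RS$ and $A_F = SR$ for rectangular matrices $R, S$ with entries in $\mathbb{Z}_{\geq 0}$; the general statement then follows by composing the resulting graded equivalences along a strong shift equivalence chain, realizing each intermediate matrix as a finite graph with no sinks. So assume $A_E = RS$ and $A_F = SR$. Using Definition~\ref{adjacency-matrix-def}, let $G$ be the finite polymorphism from $E^0$ to $F^0$ with $A_G = R$, let $H$ be the finite polymorphism from $F^0$ to $E^0$ with $A_H = S$, and set $M := kG^1$ and $N := kH^1$.

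First I would produce $\omega_E$ and $\omega_F$. Because $A_G A_H = RS = A_E$ and $A_H A_G = SR = A_F$, the products of polymorphisms satisfy $G \times H \cong E$ and $H \times G \cong F$, and so Proposition~\ref{bimodule-functor-prop}, together with the induced isomorphisms $k(G\times H)^1 \cong kE^1$ and $k(H\times G)^1 \cong kF^1$, supplies bimodule isomorphisms
$$\omega_E : M \otimes_{kF^0} N \to kE^1 = (kE^1)^{\otimes 1} \qquad \text{and} \qquad \omega_F : N \otimes_{kE^0} M \to kF^1 = (kF^1)^{\otimes 1}.$$
With these chosen, I would then \emph{define} the conjugacy isomorphisms out of $\omega_E$, $\omega_F$, and the associativity isomorphisms of Notation~\ref{associativity-iso-notation}, mimicking the two-loop example that follows Example~\ref{identityconjpair}:
$$\sigma_M := (\mathrm{id}_M \otimes \omega_F) \circ \alpha_{M,N,M} \circ (\omega_E^{-1} \otimes \mathrm{id}_M), \qquad \sigma_N := (\mathrm{id}_N \otimes \omega_E) \circ \alpha_{N,M,N} \circ (\omega_F^{-1} \otimes \mathrm{id}_N).$$
One checks these have the correct bimodule types, so that $(M, \sigma_M)$ is a specified conjugacy pair from $E$ to $F$ and $(N, \sigma_N)$ one from $F$ to $E$; thus $M$, $N$, $\omega_E$, $\omega_F$ (with $n=1$) are candidates for the data of Theorem~\ref{thm:main2}.

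It remains to verify the two commuting diagrams in Theorem~\ref{thm:main2}. Here the decisive simplification is the observation in Remark~\ref{nu1Eremark} that $\nu_1^E = \mathrm{id}_{kE^1 \otimes kE^1}$ and $\nu_1^F = \mathrm{id}_{kF^1 \otimes kF^1}$, so the first diagram collapses to the single identity
$$(\omega_E \otimes \mathrm{id}_{kE^1}) \circ (\sigma_M \# \sigma_N) = \mathrm{id}_{kE^1} \otimes \omega_E,$$
equivalently $\sigma_M \# \sigma_N = (\omega_E^{-1} \otimes \mathrm{id}_{kE^1}) \circ (\mathrm{id}_{kE^1} \otimes \omega_E)$, with the symmetric statement for $F$. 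I would then expand $\sigma_M \# \sigma_N$ via Definition~\ref{hash-def} and substitute the formulas for $\sigma_M$ and $\sigma_N$. The main obstacle is precisely this verification: one must show that in the resulting long composite the copy of $\omega_F$ arising from $\sigma_M$ cancels the copy of $\omega_F^{-1}$ arising from $\sigma_N$, while the intervening associativity isomorphisms merely reorganize the bracketing, leaving exactly $(\omega_E^{-1} \otimes \mathrm{id}) \circ (\mathrm{id} \otimes \omega_E)$. Tracing an element $e \otimes (m \otimes n)$ through the composite confirms that it lands on $\omega_E^{-1}(e) \otimes \omega_E(m \otimes n)$, which is exactly its image under the right-hand side, so the identity should hold; making this rigorous is a bookkeeping argument governed by the naturality of the associativity maps and Mac Lane coherence (every formal diagram of associativity isomorphisms commutes), the only non-formal inputs being the cancellations $\omega_F \circ \omega_F^{-1} = \mathrm{id}$ and $\omega_E^{-1} \circ \omega_E = \mathrm{id}$. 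I expect this to be tedious but routine once the associativity maps are carefully accounted for.

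With both diagrams established, the hypotheses of Theorem~\ref{thm:main2} hold in the elementary case (with $n=1$), and that theorem yields that $L_k(E)$ and $L_k(F)$ are graded Morita equivalent. Composing these graded equivalences along a strong shift equivalence chain then gives the general statement.
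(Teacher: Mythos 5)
Your proposal is correct and follows essentially the same route as the paper's proof: the identical choices $M = kG^1$, $N = kH^1$, the same formulas $\sigma_M = (\mathrm{id}_M \otimes \omega_F)\circ\alpha_{M,N,M}\circ(\omega_E^{-1}\otimes\mathrm{id}_M)$ and $\sigma_N = (\mathrm{id}_N \otimes \omega_E)\circ\alpha_{N,M,N}\circ(\omega_F^{-1}\otimes\mathrm{id}_N)$, and the same decisive observation that $\nu_1^E$ and $\nu_1^F$ are identities so that the diagrams commute once one checks $\sigma_M \mathbin{\#} \sigma_N = (\omega_E^{-1}\otimes\mathrm{id})\circ(\mathrm{id}\otimes\omega_E)$. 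The only (minor) divergence is the reduction to the elementary case: you invoke transitivity of graded Morita equivalence, which yields the final conclusion but not literally the claim that the hypotheses of Theorem~\ref{thm:main2} hold for $(E,F)$ along a longer chain, whereas the paper concatenates the bimodule data and commutative diagrams so that those hypotheses themselves are verified (with $n$ equal to the length of the chain).
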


\begin{proof}
It suffices to prove the result when the adjacency matrices are elementary strong shift equivalent, since the general result will follow by composing isomorphisms and concatenating commutative diagrams to show the hypotheses of Theorem~\ref{thm:main2} are satisfied.

Suppose $E$ and $F$ are finite graphs with no sinks and that the adjacency matrices of $E$ and $F$ (which we denote by $A$ and $B$, respectively) are elementary strong shift equivalent.  By definition there exist rectangular matrices $R$ and $S$ such that $A = RS$ and $B = SR$.  As discussed in Definition~\ref{adjacency-matrix-def}, this implies there exist polymorphisms $G$ and $H$ such that $E = G \times H$ and $F = H \times G$.  Applying Proposition~\ref{bimodule-functor-prop} we have bimodule isomorphisms 
$$ kE^1 \cong k G^1 \otimes_{kF^0}  k H^1 \qquad \text{ and } \qquad kF^1 \cong k H^1 \otimes_{kE^0} k G^1.$$
If we set $M := kG^1$ and $N := kH^1$, then we have bimodule  isomorphisms
$$\omega_E \colon M \otimes_{kF^0} N \to kE^1 \qquad \text{ and } \qquad \omega_{F} \colon N \otimes_{kE^0} M \to {kF^1}.$$
Furthermore, we may define bimodule  isomorphisms
$$\sigma_M \colon {kE^1} \otimes_{kE^0} M \to M \otimes_{kF^0} {kF^1} \qquad \text{ and } \qquad  \sigma_N \colon {kF^1} \otimes_{kF^0} N \to N \otimes_{kE^0} {kE^1}$$
by setting 
$$\sigma_M := ( \operatorname{id}_M \otimes \omega_F) \circ \alpha_{M,N,M} \circ (\omega_E^{-1} \otimes \operatorname{id}_M) \ \text{ and } \ \sigma_N := ( \operatorname{id}_N \otimes \omega_E) \circ \alpha_{N,M,N} \circ (\omega_F^{-1} \otimes \operatorname{id}_N).$$\
It is straightforward, although tedious, to verify that
$$\sigma_M \# \sigma_N : kE^1 \otimes_{kE^0} ( M \otimes_{kF^0} N) \to ( M \otimes_{kF^0} N) \otimes_{kE^0} kE^1$$ and $$\sigma_N \# \sigma_M : kF^1 \otimes_{kF^0} ( N \otimes_{kE^0} M) \to ( N \otimes_{kE^0} M) \otimes_{kF^0} kF^1$$ are the unique maps with
$$ \sigma_M \# \sigma_N (x \otimes (m \otimes n)) = \omega_E^{-1}(x) \otimes \omega_E (m \otimes n) \qquad \text{for $x \in kE^1$, $m \in M$, and $n \in N$}$$
and
$$\sigma_N \# \sigma_M (y \otimes (n \otimes m)) = \omega_F^{-1}(y) \otimes \omega_F (n \otimes m) \qquad
\text{for $y \in kF^1$, $n \in N$, and $m \in M$.}$$  
Furthermore, as emphasized in Remark \ref{nu1Eremark}, for $n = 1$ the maps 
$$\nu_1^E : kE^1 \otimes_{kE^0} kE^1 \to kE^1 \otimes_{kE^0} kE^1 \quad \text{  and } \quad \nu_1^F : kF^1 \otimes_{kF^0} kF^1 \to kF^1 \otimes_{kF^0} kF^1$$ are the appropriate identity maps.
Hence the diagrams
$$
\xymatrix{
kE^1 \otimes_{kE^0} (M \otimes_{kF^0} N) \ar[rrr]^{\sigma_M \# \sigma_N = \omega_E^{-1} \otimes \omega_E}  \ar[d]_{\operatorname{id} \otimes \omega_E} &  & & (M \otimes_{kF^0} N) \otimes_{kE^0} kE^1 \ar[d]^{ {\omega_E \otimes \operatorname{id}} }\\
kE^1 \otimes_{kE^0} kE^1 \ar[rrr]^{\nu_1^E = \operatorname{id}} &  & &  kE^1 \otimes_{kE^0} kE^1
}
$$
and
$$
\xymatrix{
kF^1 \otimes_{kF^0} (N \otimes_{kE^0} M) \ar[rrr]^{\sigma_N \# \sigma_M = \omega_F^{-1} \otimes \omega_F}  \ar[d]_{\operatorname{id} \otimes \omega_F} & & & (N \otimes_{kE^0} M) \otimes_{kF^0} kF^1 \ar[d]^{ {\omega_F \otimes \operatorname{id}} }\\
kF^1 \otimes_{kF^0} kF^1 \ar[rrr]^{\nu_1^F = \operatorname{id}} & &  &  (kF^1) \otimes_{kF^0} kF^1
}
$$
trivially  commute, and the hypotheses of Theorem~\ref{thm:main2}  are thereby satisfied.
\end{proof}

\begin{rmk}
It was shown in \cite[Proposition~15(2)]{RoozbehDyn} that if two finite graphs with no sinks and no sources have adjacency matrices that are strong shift equivalent, then the associated Leavitt path algebras are graded Morita equivalent.  The proof given in \cite[Proposition~15(2)]{RoozbehDyn} obtains the result by applying Williams' Theorem,  and then showing that in-splittings and out-splittings of graphs preserve graded Morita equivalence of the associated Leavitt path algebras.   Proposition~\ref{SE-implies-COM-prop} and Theorem~\ref{thm:main2} together establish a slightly more general result than \cite[Proposition~15(2)]{RoozbehDyn} (in that  we do not need the hypothesis  that the graphs have no sources).   Moreover, our proof avoids consideration of in-splittings and out-splittings; indeed, we sidestep these graph techniques entirely, and obtain our result by applying the bridging bimodule.  This provides evidence that the bridging bimodule is a useful tool that not only gives a novel perspective, but allows us to obtain deep results through new techniques.
\end{rmk}

\subsection{The Relationship with the Hazrat Conjecture}\label{SubsectionHazratConj} 

Using results obtained in this article, we may examine the implications for Hazrat's Conjecture and outline new ways of investigating it.

\begin{recast}\label{recast}
Let $E$ and $F$ be finite graphs with no sinks and let $k$ be a field.  Consider the following five statements:
\begin{quote}
\begin{itemize}
\item[{\rm (GrME)}] The Leavitt path algebras $L_k(E)$ and $L_k(F)$ are graded Morita equivalent.
\item[{\rm (GrK)}] There is an order preserving $\mathbb{Z}[x,x^{-1}]$-module isomorphism from $K_0^{\mathrm{gr}}(L_k(E))$ to $K_0^{\mathrm{gr}}(L_k(F))$.
\item[{\rm (SE)}] The adjacency matrices of $E$ and $F$ are shift equivalent.
\item[{\rm (MSE)}] There exists a  $kE^0$--$kF^0$-bimodule $M$, a $kF^0$--$kE^0$-bimodule $N$, and positive integer $n$ such that 
\begin{align*}
{(kE^1)}^{\otimes n} &\cong M \otimes_{kF^0} N & {(kF^1)}^{\otimes n} &\cong N \otimes_{kE^0} M &  \\
{kE^1} \otimes_{kE^0} M &\cong M \otimes_{kF^0} {kF^1} & \text{ and } \qquad  {kF^1} \otimes_{kF^0} N &\cong N \otimes_{kE^0} {kE^1},
\end{align*}
where the isomorphisms are bimodule isomorphisms.
\item[(Com)] There exists a $kE^0$--$kF^0$-bimodule $M$, a $kF^0$--$kE^0$-bimodule $N$, and a positive integer $n$ for which there exist bimodule isomorphisms 
\begin{align*}
\omega_E \colon M \otimes_{kF^0} N &\to ({kE^1})^{\otimes n}, & \omega_{F} \colon N \otimes_{kE^0} M &\to ({kF^1})^{\otimes n} \\
\sigma_M \colon {kE^1} \otimes_{kE^0} M &\to M \otimes_{kF^0} {kF^1}, & \text{ and } \qquad  \sigma_N \colon {kF^1} \otimes_{kF^0} N &\to N \otimes_{kE^0} {kE^1}
\end{align*}
that satisfy the commutation relations
$$\nu_n^E \circ ( \operatorname{id}_{kE^1} \otimes \omega_E) = ( \omega_E \otimes \operatorname{id}_{kE^1}) \circ ( \sigma_M \# \sigma_N) \ \ \ \  \mbox{and} $$
$$\nu_n^F \circ ( \operatorname{id}_{kF^1} \otimes \omega_F) = ( \omega_F \otimes \operatorname{id}_{kF^1}) \circ ( \sigma_N \# \sigma_M).$$

\end{itemize}
\end{quote}
$ $

\noindent The following implications among these five conditions are known to hold:

\medskip

$$
\xymatrix{ {\rm (GrME)} \ar@{=>}[r] & {\rm (GrK)} \ar@{<=>}[r] & {\rm (SE)} \ar@{<=>}[r] & {\rm (MSE)} & (\textnormal{Com}) \ar@{=>}[l] \ar@/_2pc/@{=>}[llll]  }
$$

\noindent As mentioned in the introduction, Hazrat established in \cite{RoozbehDyn} that  ${\rm (GrME)} \implies {\rm (GrK)}$ and ${\rm (GrK)} \iff {\rm (SE)}$.    In this article we have established ${\rm (SE)} \iff {\rm (MSE)}$ (in Theorem~\ref{thm:shift-equivalence-module}),  and $(\textnormal{Com})  \implies {\rm (GrME)}$  (in Theorem~\ref{thm:main2}).  Since  $(\textnormal{Com})$ may be viewed as ${\rm (MSE)}$ plus commutativity conditions of the isomorphisms, we trivially have $(\textnormal{Com}) \implies {\rm (MSE)}$. 
\end{recast}

Hazrat has conjectured that ${\rm (GrK)}$ (equivalently, ${\rm (SE)}$) implies  ${\rm (GrME)}$.  Efforts to establish this conjecture have  been as of yet  unsuccessful.  However, much work continues on this question, since finding necessary and sufficient conditions for graded Morita equivalence, especially easily computable conditions,  remains a high priority in  the study of Leavitt path algebras.

In light of the fact that ${\rm (SE)} \iff {\rm (MSE)}$, the condition stated in $(\textnormal{Com})$ may be considered as ``shift equivalence of the adjacency matrices of $E$ and $F$ together with  certain commutativity conditions".    Given that the community has been unable to show that {\rm (SE)} implies {\rm (GrME)}, the implications above suggest other paths that may be explored.  In particular, we have the following two questions.

\medskip

\noindent \textbf{Question 1:} Does {\rm (GrME)} imply $(\textnormal{Com})$?

\medskip

\noindent \textbf{Question 2:} Does {\rm (MSE)}  imply $(\textnormal{Com})$?  (In other words, can the isomorphisms in {\rm (MSE)} always be chosen to satisfy the two commutativity conditions of $(\textnormal{Com})$?)

\medskip

Given the implications established above, an affirmative answer to Question~2 implies an affirmative answer to Question~1.  If it can be shown that the answer to Question~2 (and hence also Question~1) is ``Yes", then we have equivalence of all five of the above conditions.  

If it can be shown that the answer to Question~1 is ``Yes", while the answer to Question~2 is ``No", then ${\rm (GrME)} \iff (\textnormal{Com})$ and each of these two equivalent conditions is strictly stronger than the equivalent Conditions ${\rm (GrK)}$, ${\rm (SE)}$, and ${\rm (MSE)}$.  

Of course, it is also possible that the answers to Question~1 and Question~2 both turn out to be ``No", in which case $(\textnormal{Com})$ is a strictly stronger condition than {\rm (GrME)}, and hence we need to look at conditions other than $(\textnormal{Com})$ for an equivalent reformulation of {\rm (GrME)}.

Finally, if we let $(\textnormal{SSE})$ denote the property that the adjacency matrices of $E$ and $F$ are strong shift equivalent, then three of our results, displayed together with two results from  \cite{RoozbehDyn}, show the following implications:
$$
\xymatrix{ 
& (\textnormal{Com}) \ar@{=>}[rd]^{\textnormal{Thm.~\ref{thm:shift-equivalence-module}}} \ar@{=>}[dd]^{\textnormal{Thm.\ref{thm:main2}}}   & \\
(\textnormal{SSE}) \ar@{=>}[ru]^{\textnormal{Prop.\ref{SE-implies-COM-prop}}}  \ar@{=>}[rd]_(.38){\txt{\tiny{\cite[Prop.15(2)]{RoozbehDyn}}  \\ \tiny{
(and no sources)
}}} & & (\textnormal{SE}) \\
& {\rm (GrME)} \ar@{=>}[ru]_{\textnormal{\ \ \cite[Prop.15(3)]{RoozbehDyn}}} & \\
}
$$
which in particular implies  the following chain of implications
$$ (\textnormal{SSE}) \Longrightarrow (\textnormal{Com}) \Longrightarrow {\rm (GrME)} \Longrightarrow (\textnormal{SE}). $$
It is unknown to the authors whether $(\textnormal{Com})$ is equivalent to $(\textnormal{SE})$, equivalent to $(\textnormal{SSE})$, or strictly between the two conditions.  The Hazrat Conjecture asserts  that {\rm (GrME)} is equivalent to $(\textnormal{SE})$.

If (\textnormal{Com}) is equivalent to $(\textnormal{SE})$, then the answers to Question~1 and Question~2 above are both ``Yes", and the Hazrat Conjecture holds.  If (\textnormal{Com}) is strictly between $(\textnormal{SE})$ and $(\textnormal{SSE})$ and the answer to Question~1 is ``Yes", then the Hazrat Conjecture would be false.  If (\textnormal{Com}) is equivalent to $(\textnormal{SSE})$, then the answer to Question~2 above is ``No" with the answer to Question~1 still unknown.  If (\textnormal{Com}) is equivalent to $(\textnormal{SSE})$ and the answer to Question~1 is ``Yes", then the Hazrat Conjecture would be false as {\rm (GrME)} would be equivalent to $(\textnormal{SSE})$.  In contrast to graph $C^*$-algebras, by results of Bratteli and Kishimoto \cite{BK}, and results of Kim and Roush \cite{KimRoush}, there are graph $C^*$-algebras over finite graphs with no sinks that are equivariantly Morita equivalent (the analytic analog of graded Morita equivalence between two Leavitt path algebras) for which the adjacency matrices of the associated graphs are not strong shift equivalent.  If (\textnormal{Com}) is equivalent to $(\textnormal{SSE})$ and the answer to Question~1 is ``No", then the bridging bimodule is rendered to be of limited use in trying to resolve the Hazrat Conjecture.

The fact that $(\textnormal{SE})$ does not imply $(\textnormal{SSE})$ is known as the ``Shift Equivalence Problem" or ``Williams Conjecture", and it is a difficult and tantalizing problem.  The solution to this problem played out over the course of approximately twenty  years (and included the publication of a journal article asserting the equivalency of the two notions that was later found to be incorrect).  A proof that $(\textnormal{SE})$ does not imply $(\textnormal{SSE})$ was ultimately given by Kim and Roush  \cite{KimRoush},  who developed an entirely new invariant (to wit, the {\it sign-gyration-compatibility condition}) to prove that a particular pair of shift equivalent matrices are not strong shift equivalent.

Since it was difficult to show that $(\textnormal{SE})$ does not imply $(\textnormal{SSE})$, and doing so required sophisticated and subtle methods including the development of a new dynamical invariant, this suggests there is a hair's breadth between these two notions.  Thus it may be at least  as difficult to determine where an intermediate notion lies, and so we consequently anticipate that our question of how  (\textnormal{Com}) relates to  $(\textnormal{SE})$ and $(\textnormal{SSE})$ will not be an easy one to resolve.


\end{document}